\newcommand{\ds}{\displaystyle}
\def\pa{\partial}
\def\ukn{U_{K}^{n}}
\def\uknp{U_{K}^{n+1}}
\def\ud{u_{\delta}}
\def\pa{\partial}
\def\xi{x_{i}}
\title{A finite volume scheme for convection-diffusion equations with nonlinear diffusion derived from the Scharfetter-Gummel scheme}
\author{Marianne Bessemoulin-Chatard}
\institute{Marianne Bessemoulin-Chatard \at Université Blaise Pascal - Laboratoire de Mathématiques UMR 6620 - CNRS - Campus des Cézeaux, B.P. 80026 63177 Aubière cedex \\ \email{Marianne.Chatard@math.univ-bpclermont.fr}}
\begin{document}

\titlerunning{A finite volume scheme for convection-diffusion equations with nonlinear diffusion}
\maketitle

\begin{abstract}
We propose a finite volume scheme for convection-diffusion equations with nonlinear diffusion. Such equations arise in numerous physical contexts. We will particularly focus on the drift-diffusion system for semiconductors and the porous media equation. In these two cases, it is shown that the transient solution converges to a steady-state solution as $t$ tends to infinity. \\
The introduced scheme is an extension of the Scharfetter-Gummel scheme for nonlinear diffusion. It remains valid in the degenerate case and preserves steady-states. We prove the convergence of the scheme in the nondegenerate case. Finally, we present some numerical simulations applied to the two physical models introduced and we underline the efficiency of the scheme to preserve long-time behavior of the solutions.
\end{abstract}

\subclass{65M12, 82D37.}

\section{Introduction} 

In this article, our aim is to elaborate a finite volume scheme for convection-diffusion equations with nonlinear diffusion. The main objective of building such a scheme is to preserve steady-states in order to be able to apply it to physical models in which it has been proved that the solution converges to equilibrium in long time. In particular, this convergence can be observed in the drift-diffusion system for semiconductors as well as in the porous media equation. \\
In this context, we will first present these two physical models – drift-diffusion system for semiconductors and porous media equation. Then, we will precise the general framework of our study in this article.

%%%%%%%%%%%%%%

\subsection{The drift-diffusion model for semiconductors}

The drift-diffusion system consists of two continuity equations for the electron density $N(x,t)$ and the hole density $P(x,t)$, as well as a Poisson equation for the electrostatic potential $V(x,t)$, for $t \in \mathbb{R}^{+}$ and $x \in \mathbb{R}^{d}$.\\
Let $\Omega \subset \mathbb{R}^{d}$ ($d \geq 1$) be an open and bounded domain. The drift-diffusion system reads 
\begin{equation}
\left\{\begin{array}{lcl} \pa_{t}N-\text{div}(\nabla r(N)-N\nabla V)=0 & \text{ on } & \Omega \times (0,T), \\
\pa_{t}P-\text{div}(\nabla r(P)+P\nabla V)=0 & \text{ on } & \Omega \times (0,T),  \\
\Delta V=N-P-C & \text{ on } & \Omega \times (0,T),\end{array}\right.
\label{DD}
\end{equation}
where $C \in L^{\infty}(\Omega)$ is the prescribed doping profile.\\
The pressure has the form of a power law,
\begin{equation*}
r(s)=s^{\gamma}, \quad \gamma \geq 1.
\end{equation*} 
We supplement these equations with initial conditions $N_{0}(x)$ and $P_{0}(x)$ and physically motivated boundary conditions: the boundary $\Gamma=\pa \Omega$ is split into two parts $\Gamma=\Gamma^{D} \cup \Gamma^{N}$ and the boundary conditions are Dirichlet boundary conditions $ \overline{N}$, $ \overline{P}$ and $ \overline{V}$ on ohmic contacts $\Gamma^{D}$ and homogeneous Neumann boundary conditions on $r(N)$, $r(P)$ and $V$ on insulating boundary segments $\Gamma^{N}$.\\
The large time behavior of the solutions to the nonlinear drift-diffusion model (\ref{DD}) has been studied by A. Jüngel in \cite{Juengel1995a}. It is proved that the solution to the transient system converges to a solution of the thermal equilibrium state as $t \rightarrow \infty$ if the Dirichlet boundary conditions are in thermal equilibrium. The thermal equilibrium is a particular steady-state for which electron and hole currents, namely $\nabla r(N)-N\nabla V$ and $\nabla r(P)+P\nabla V$, vanish. The existence of a thermal equilibrium has been studied in the case of a linear pressure by P. Markowich, C. Ringhofer and C. Schmeiser in \cite{Markowich1986,Markowich1990}, and in the nonlinear case by P. Markowich and A. Unterreiter in \cite{Markowich1993}.\\
We introduce the enthalpy function $h$ defined by
\begin{equation}
h(s)=\int_{1}^{s}\frac{r'(\tau)}{\tau}\, d\tau
\label{h}
\end{equation}
and the generalized inverse $g$ of $h$ defined by
\begin{equation*}
g(s)=\left\{ \begin{array}{rcl} h^{-1}(s) &\text{ if }& h(0^{+})<s<\infty,\\ 0 &\text{ if }& s \leq h(0^{+}).\end{array}\right.
\end{equation*}
If the boundary conditions satisfy $\overline{N},\overline{P}>0$ and
\begin{equation*}
h(\overline{N})-\overline{V}=\alpha_{N} \text{ and } h(\overline{P})+\overline{V}=\alpha_{P} \text{ on } \Gamma^{D},
\end{equation*}
the thermal equilibrium is defined by
\begin{equation}
N^{eq}(x)=g\left(\alpha_{N}+V^{eq}(x)\right), \quad P^{eq}(x)=g\left(\alpha_{P}-V^{eq}(x)\right), \quad x \in \Omega,
\label{eqNP}
\end{equation}
while $V^{eq}$ satisfies the following elliptic problem
\begin{equation}
\left\{\begin{array}{ll} \Delta V^{eq}=g\left(\alpha_{N}+V^{eq}\right)-g\left(\alpha_{P}-V^{eq}\right)-C \text{   in } \Omega, &\\
V^{eq}(x)=\overline{V}(x) \text{ on } \Gamma^{D}, \quad \nabla V^{eq}\cdot \mathbf{n}=0 \text{ on } \Gamma^{N}.&\end{array}\right.
\label{eqV}
\end{equation}
The proof of the convergence to thermal equilibrium is based on an energy estimate with the control of the energy dissipation. More precisely, if we define 
\begin{equation}
H(s)=\int_{1}^{s}h(\tau)d\tau, \quad s \geq 0,
\label{H}
\end{equation}
then we can introduce the deviation of the total energy (sum of the internal energies for the electron and hole densities and the energy due to the electrostatic potential) from the thermal equilibrium (see \cite{Juengel1995a})
\begin{eqnarray}
\mathcal{E}(t)&=&\int_{\Omega}\left(\vphantom{\frac{1}{2}}H\left(N(t)\right)-H\left(N^{eq}\right)-h\left(N^{eq}\right)\left(N(t)-N^{eq}\right)+H\left(P(t)\right)-H\left(P^{eq}\right)\right. \nonumber\\
& & \quad \left. -h\left(P^{eq}\right)\left(P(t)-P^{eq}\right) +\frac{1}{2}\left\vert \nabla \left(V(t)-V^{eq}\right)\right\vert^{2}\right)dx,
\label{Econtinusc}
\end{eqnarray}
and the energy dissipation
\begin{equation}
\mathcal{I}(t)=-\int_{\Omega}\left(N(t)\left\vert \nabla(h(N(t))-V(t))\right\vert^{2}+P(t)\left\vert \nabla(h(P(t))+V(t))\right\vert^{2}\right)dx.
\label{Icontinusc}
\end{equation}
Then the keypoint of the proof is the following estimate:
\begin{equation}
0 \leq \mathcal{E}(t)+\int_{0}^{t}\mathcal{I}(\tau)\,d\tau \leq \mathcal{E}(0).
\label{inegentropie}
\end{equation}

%%%%%%%%%%%%%%%%%%%%%%

\subsection{The porous media equation}

The flow of a gas in a $d$-dimensional porous medium is classically described by the Leibenzon-Muskat model, 
\begin{equation}
\left\{\begin{array}{lcl} \pa_{t}v=\Delta v^{\gamma} & & \text{ on } \mathbb{R}^{d}\times(0,T),\\ v(x,0)=v_{0}(x) & & \text{ on }  \mathbb{R}^{d},\end{array}\right.
\label{pm}
\end{equation}
where the function $v$ represents the density of the gas in the porous medium and $\gamma>1$ is a physical constant.\\
With a time-dependent scaling (see \cite{Carrillo2000}), we transform (\ref{pm}) into the nonlinear Fokker-Planck equation
\begin{equation}
\left\{\begin{array}{lcl} \pa_{t}u=\text{div}(xu+\nabla u^{\gamma}) & & \text{ on } \mathbb{R}^{d}\times(0,T),\\
 u(x,0)=u_{0}(x)& & \text{ on }  \mathbb{R}^{d}.\end{array}\right.
\label{PM}
\end{equation}
It is proved in \cite{Carrillo2000} that the unique stationary solution of (\ref{PM}) is given by the Barenblatt-Pattle type formula
\begin{equation}
u^{eq}(x)=\left(C_{1}-\frac{\gamma -1}{2\gamma}|x|^{2}\right)_{+}^{1/(\gamma -1)},
\label{barenblatt}
\end{equation}
where $C_{1}$ is a constant such that $u^{eq}$ has the same mass as the initial data $u_{0}$.\\ 
Moreover, J. A. Carrillo and G. Toscani have proved in \cite{Carrillo2000} the convergence of the solution $u(x,t)$ of (\ref{pm}) to the Barenblatt-Pattle solution $u^{eq}(x)$ as $t \rightarrow \infty$. As in the case of the drift-diffusion model, the proof of the convergence to the Barenblatt-Pattle solution is based on an entropy estimate with the control of the entropy dissipation given by (\ref{inegentropie}), where the relative entropy is defined by
\begin{equation}
\mathcal{E}(t)=\int_{\mathbb{R}^{d}}\left(H(u(t))-H(u^{eq})+\frac{|x|^{2}}{2}\left(u(t)-u^{eq}\right)\right)dx,
\label{Econtinupm}
\end{equation}
where $H$ is defined by (\ref{H}) and the entropy dissipation is given by
\begin{equation}
\mathcal{I}(t)=-\frac{d}{dt}\mathcal{E}(t)=-\int_{\mathbb{R}^{d}}u(t)\left\vert \nabla\left(h(u(t))+\frac{|x|^{2}}{2}\right) \right\vert^{2}dx.
\label{Icontinupm}
\end{equation}

%%%%%%%%%%%%%%%%%%%%%%%%%%

\subsection{Motivation}

Many numerical schemes have been proposed to approximate the solutions of nonlinear con\-vection-diffusion equations. In particular, finite volume methods have been proved to be efficient in the case of degenerate parabolic equations (see \cite{Eymard2000,Eymard2002}). We also mention the combined finite volume-finite element approach for nonlinear degenerate parabolic convection-diffusion-reaction equations analysed in \cite{Eymard2006a}. The definition of the so-called local Péclet upstream weighting numerical flux guarantees the stability of the scheme while reducing the excessive numerical diffusion added by the classical upwinding.\\
On the other hand, there exists a wide literature on numerical schemes for the drift-diffusion equations. It started with 1-D finite difference methods and the Scharfetter-Gummel scheme (\cite{Scharfetter1969}). In the linear pressure case ($r(s)=s$), a mixed exponential fitting finite element scheme has been successfully developed by F. Brezzi, L. Marini and P. Pietra in \cite{Brezzi1987,Brezzi1989}. The adaptation of the mixed exponential fitting method to the nonlinear case has been developed by F. Arimburgo, C. Baiocchi, L. Marini in \cite{Arimburgo1992} and by A. Jüngel in \cite{Juengel1995} for the one-dimensional problem, and by A. Jüngel and P. Pietra in \cite{Juengel1997} for the two-dimensional problem. Moreover, C. Chainais-Hillairet and Y.J. Peng proposed a finite volume scheme for the drift-diffusion equations in 1-D in \cite{Chainais-Hillairet2003a}, which was extended in \cite{Chainais-Hillairet2003,Chainais-Hillairet2004} in the multidimensional case. C. Chainais-Hillairet and F. Filbet also introduced in \cite{Chainais-Hillairet2007} a finite-volume scheme preserving the large time behavior of the solutions of the nonlinear drift-diffusion model.\\ 
Now to explain our approach, let us first recall some previous numerical results concerning the drift-diffusion system for semiconductors. The precise definitions of schemes considered will be presented in Section 2. We compare results obtained with three existing finite volume schemes: the classical upwind scheme proposed by C. Chainais-Hillairet and Y. J. Peng in \cite{Chainais-Hillairet2003a}, the Scharfetter-Gummel scheme introduced in \cite{Scharfetter1969} and the nonlinear upwind scheme studied in \cite{Chainais-Hillairet2007}.\\
In Figure \ref{comparelin}, we present some results obtained in the case of a linear diffusion ($r(s)=s$). We represent the relative energy $\mathcal{E}$ and the dissipation of energy $\mathcal{I}$ obtained with the upwind flux and the Scharfetter-Gummel flux for a test case in one space dimension. We can observe a phenomenon of saturation of $\mathcal{E}$ and $\mathcal{I}$ for the upwind flux. In addition, we clearly observe that the energy and its dissipation obtained with the Scharfetter-Gummel flux converge to zero when time goes to infinity, which means that densities $N(t)$ and $P(t)$ converge to the thermal equilibrium. It appears that the Scharfetter-Gummel flux is very efficient, but is only valid for linear diffusion. Moreover, we can emphasize that contrary to the upwind flux, the Scharfetter-Gummel flux preserves the thermal equilibrium.\\
In Figure \ref{comparenonlin}, we present numerical results obtained in the case of a nonlinear diffusion $r(s)=s^{2}$. We represent the relative energy $\mathcal{E}$ and the dissipation $\mathcal{I}$ obtained with the classical upwind flux and with the nonlinear upwind flux for a test case in one dimension of space. We still observe a phenomenon of saturation of $\mathcal{E}$ and $\mathcal{I}$ for the classical upwind flux. For the nonlinear flux, we clearly notice that the energy and its dissipation converge to zero when time goes to infinity.\\
Looking at these results, it seems crucial that the numerical flux preserves the thermal equilibrium to obtain the consistency of the approximate solution in the long time asymptotic limit.

\begin{figure}
\centering
\subfigure{\includegraphics[scale=0.45]{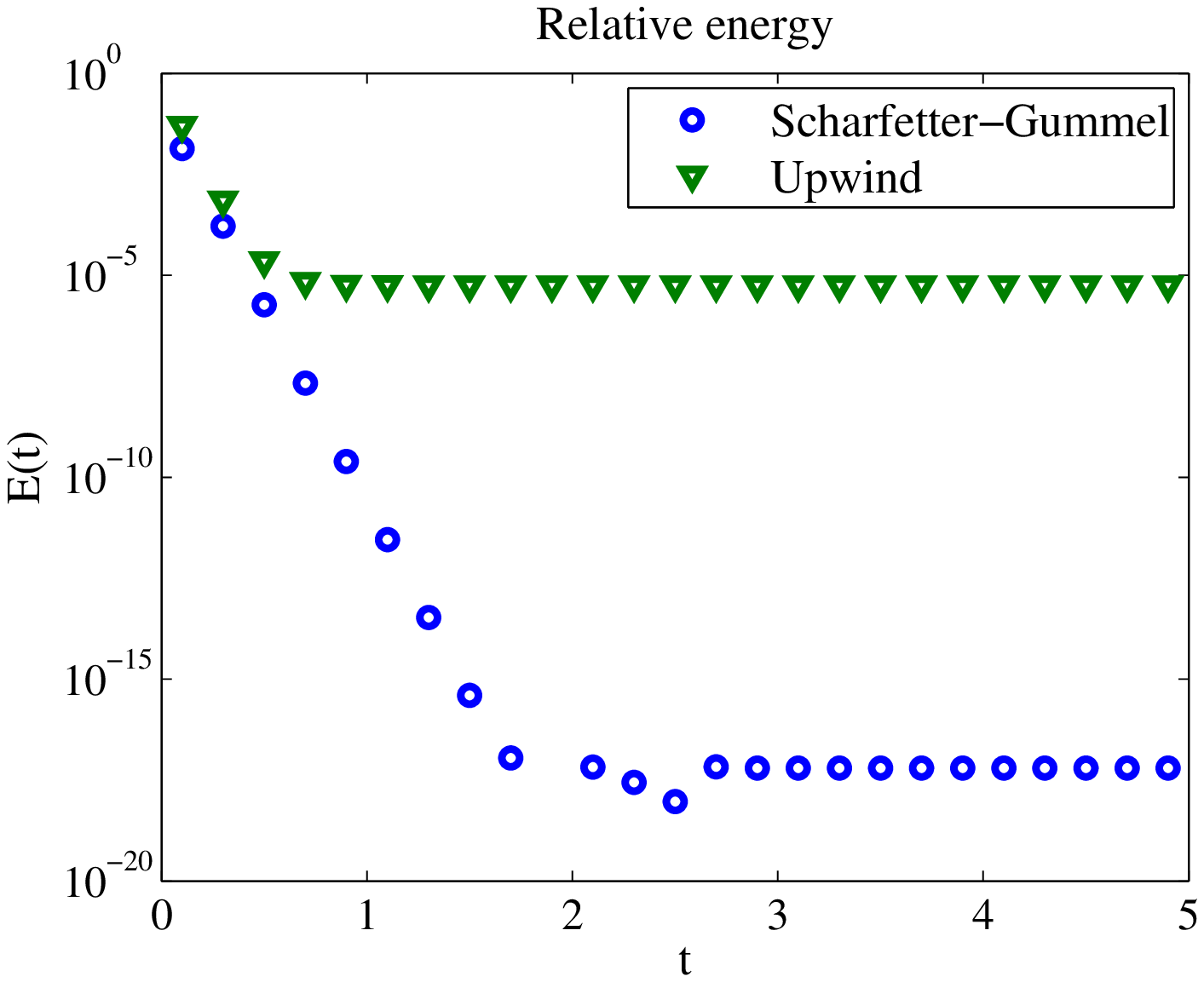}}
\subfigure{\includegraphics[scale=0.45]{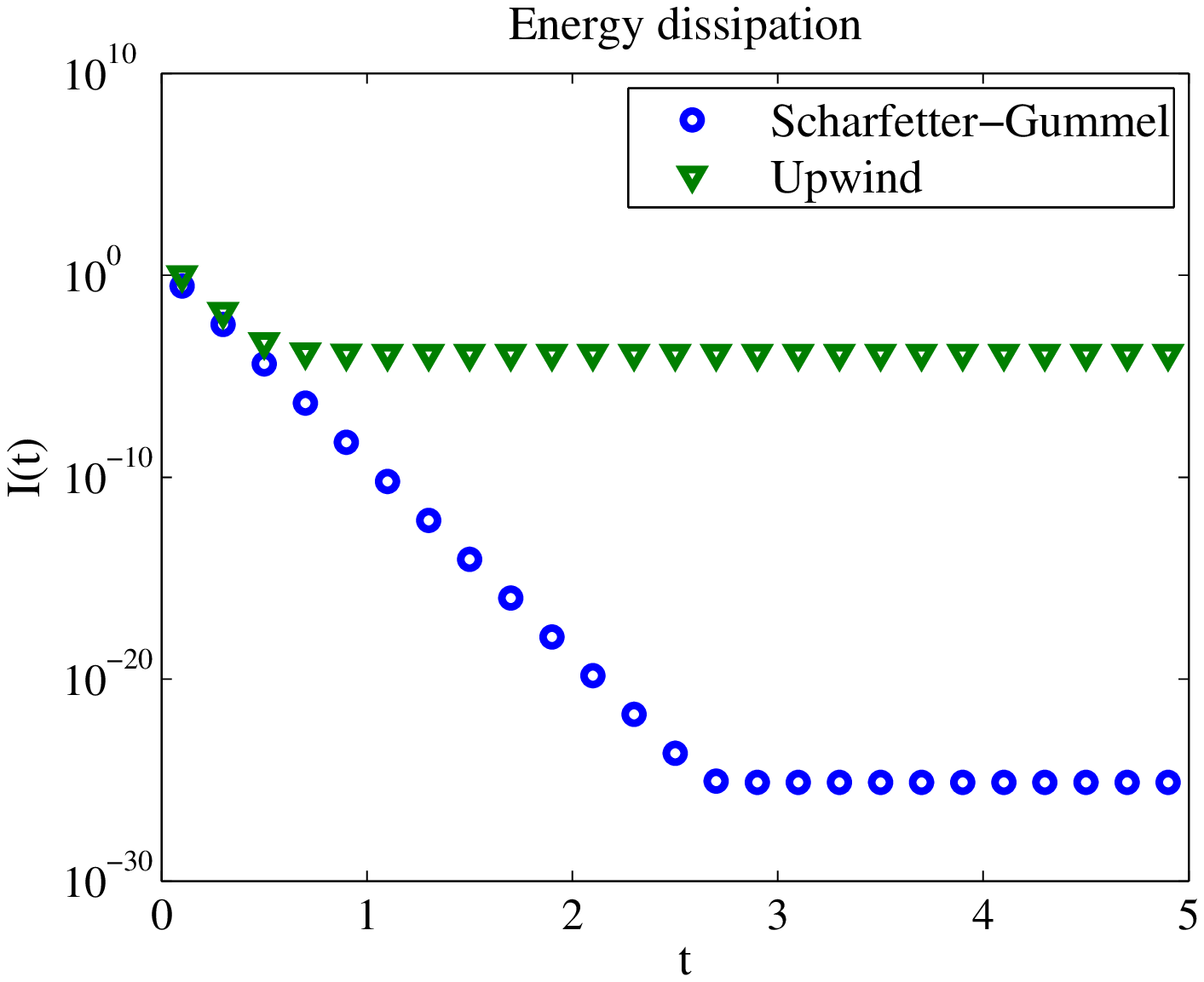}}
\caption{Linear case: relative energy $\mathcal{E}^{n}$ and dissipation $\mathcal{I}^{n}$ for different schemes in log scale, with time step $\Delta t=10^{-2}$ and space step $\Delta x=10^{-2}$.}
\label{comparelin}
\end{figure}

\begin{figure}
\centering
\subfigure{\includegraphics[scale=0.45]{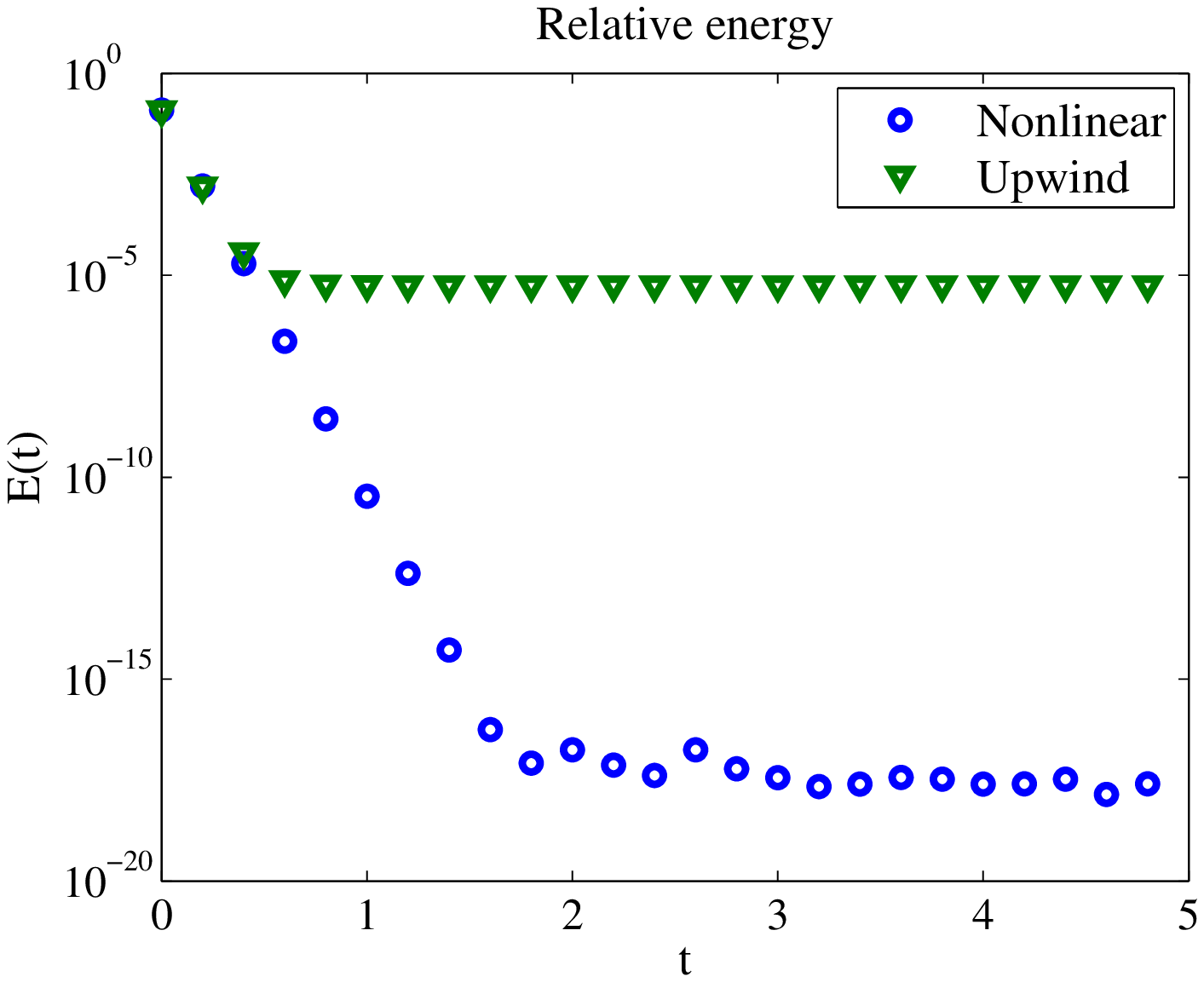}}
\subfigure{\includegraphics[scale=0.45]{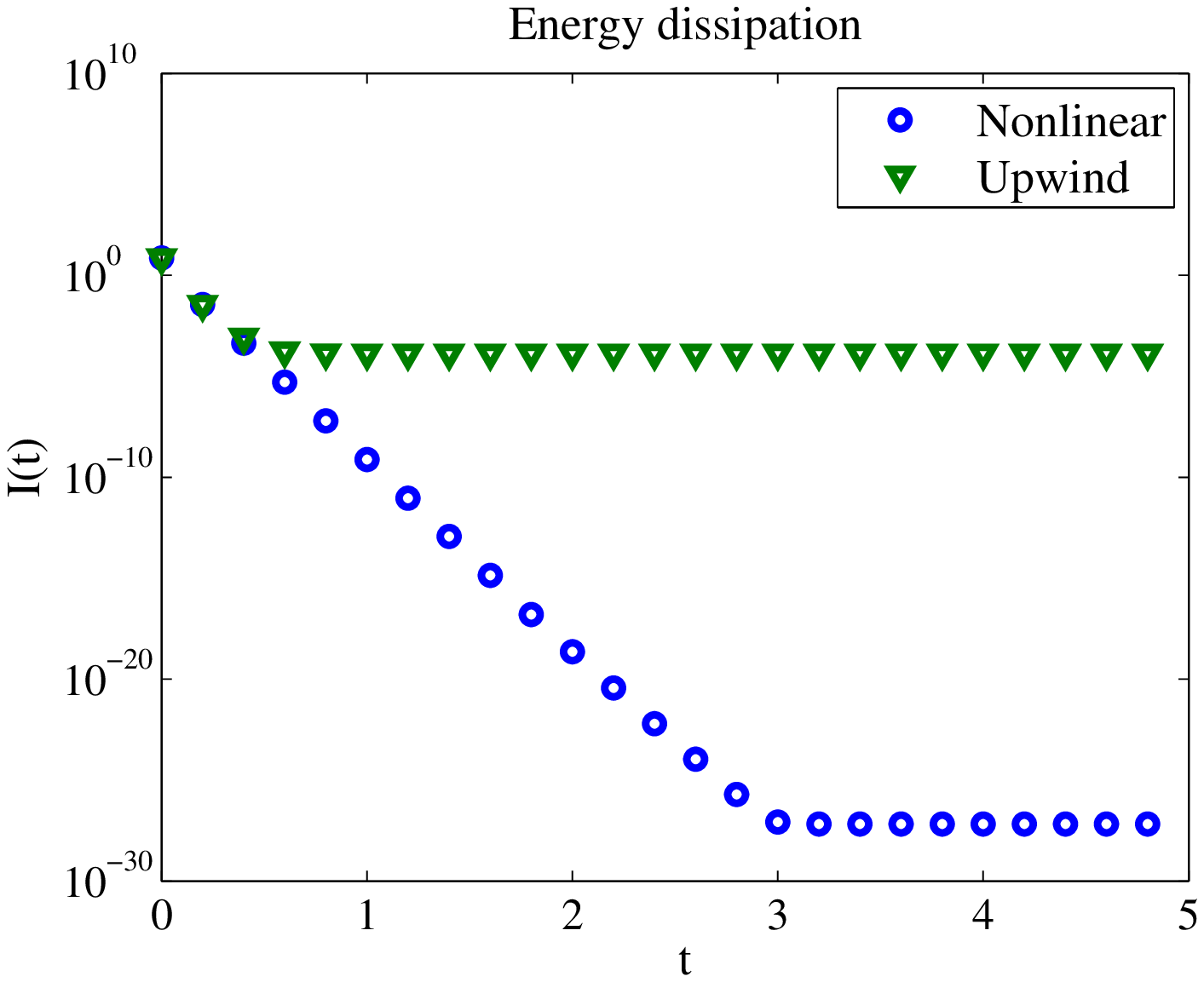}}
\caption{Nonlinear case: relative energy $\mathcal{E}^{n}$ and dissipation $\mathcal{I}^{n}$ for different schemes in log scale, with time step $\Delta t=5.10^{-4}$ and space step $\Delta x=10^{-2}$.}
\label{comparenonlin}
\end{figure}

\paragraph*{}Our aim is to propose a finite volume scheme for convection-diffusion equations with nonlinear diffusion. We will focus on preserving steady-states in order to obtain a satisfying long-time behavior of the approximate solution. The scheme proposed in \cite{Chainais-Hillairet2007} satisfies this property but because of the nonlinear discretization of the diffusive terms, it leads to solve a nonlinear system at each time step, even in the case of a linear diffusion. The idea is to extend the Scharfetter-Gummel scheme, which is only valid in the case of a linear diffusion, for convection-diffusion equations with nonlinear diffusion, even in the degenerate case. Some extensions of this scheme have already been proposed. Indeed, R. Eymard, J. Fuhrmann and K. Gärtner studied a scheme valid in the case where the convection and diffusion terms are nonlinear (see \cite{Eymard2006}), but their method leads to solve a nonlinear elliptic problem at each interface. A. Jüngel and P. Pietra proposed a scheme for the drift-diffusion model (see \cite{Juengel1995,Juengel1997}), but it is not very satisfying to reflect the large-time behavior of the solutions.

%%%%%%%%%%%%%%%%%%%%%%%%%%

\subsection{General framework}

We will now consider the following problem:
\begin{equation}
\pa_{t}u-\text{div}(\nabla r(u)-\mathbf{q}u)=0 \text{ for } (x,t) \in \Omega \times (0,T),
\label{eq}
\end{equation}
with an initial condition 
\begin{equation}
u(x,0)=u_{0}(x) \text{ for } x \in \Omega.
\label{CI}
\end{equation}
Moreover, we will consider Dirichlet-Neumann boundary conditions. The boundary $\pa \Omega=\Gamma$ is split into two parts $\Gamma=\Gamma^{D} \cup \Gamma^{N}$ and, if we denote by $\mathbf{n}$ the outward normal to $\Gamma$, the boundary conditions are Dirichlet boundary conditions on $\Gamma^{D}$
\begin{equation}
u(x,t)=\overline{u}(x,t) \text{ for } (x,t) \in \Gamma^{D} \times (0,T),
\label{BCD}
\end{equation}
and homogeneous Neumann boundary conditions on $\Gamma^{N}$:
\begin{equation}
\nabla r(u) \cdot \mathbf{n}=0 \text{ on } \Gamma^{N} \times (0,T).
\label{BCN}
\end{equation}

\begin{remark}
We will construct the scheme and perform some numerical experiments in the case of Dirichlet-Neumann boundary conditions. However, for the analysis of the scheme, we will only consider the case of Dirichlet boundary conditions ($\pa\Omega=\Gamma^{D}=\Gamma$).
\end{remark}

We suppose that the following hypotheses are fulfilled:
\begin{description}
\item[(H1)] $\Omega$ is an open bounded connected subset of $\mathbb{R}^{d}$, with $d=1,2$ or $3$,
\item[(H2)] $\pa\Omega=\Gamma^{D}=\Gamma$, $\overline{u}$ is the trace on $\Gamma \times (0,T)$ of a function, also denoted $\overline{u}$, which is assumed to satisfy $\overline{u} \in H^{1}(\Omega \times (0,T))\cap L^{\infty}(\Omega \times (0,T))$ and $\overline{u} \geq 0\textit{ a.e.}$,
\item[(H3)] $u_{0} \in L^{\infty}(\Omega)$ and $u_{0} \geq 0 \textit{ a.e.}$,
\item[(H4)] $r \in C^{2}(\mathbb{R})$ is strictly increasing on $]0,+\infty[$, $r(0)=r'(0)=0$, with $r'(s)\geq c_{0}s^{\gamma-1}$,
\item[(H5)] $\mathbf{q} \in C^{1}(\overline{\Omega},\mathbb{R}^{d})$.
\end{description}
H. Alt, S. Luckhaus and A. Visintin, as well as J. Carrillo, studied the existence and uniqueness of a weak solution to the problem (\ref{eq})-(\ref{BCN}) in \cite{Alt1984} and \cite{Carrillo1999} respectively.

\begin{definition}
We say that $u$ is a solution to the problem (\ref{eq})-(\ref{CI})-(\ref{BCD})-(\ref{BCN}) if it verifies:
\begin{equation*}
u \in L^{\infty}(\Omega \times (0,T)),\,\ u-\overline{u} \in L^{2}(0,T;H^{1}_{0}(\Omega))
\end{equation*}
and for all $\psi \in \mathcal{D}(\Omega \times [0,T[)$,
\begin{equation}
\int_{0}^{T}\int_{\Omega}\left( u \, \pa_{t}\psi - \nabla(r(u))\cdot \nabla\psi+ u \, \mathbf{q}\cdot \nabla\psi \right) dx \, dt + \int_{\Omega}u(x,0)\,\psi(x,0)\,dx=0.
\label{defsol}
\end{equation}
\label{defisol}
\end{definition}

\paragraph*{} The outline of the paper is the following. In Section 2, we construct the finite volume scheme. In Section 3, we prove the existence and uniqueness of the solution of the scheme and give some estimates on this solution. Then, thanks to these estimates, we prove in Section 4 the compactness of a family of approximate solutions. It yields the convergence (up to a subsequence) of the solution $u_{\delta}$ of the scheme to a solution of (\ref{eq})-(\ref{BCN}) when $\delta$ goes to 0. 
In the last section, we present some numerical results that show the efficiency of the scheme.

%%%%%%%%%%%%%%%%%%%%%%%%%%%%%%%%%%%%%%%%%%%%%%%%%%%%%%%%%%%%%%%

\section{Presentation of the numerical scheme} 

In this section, we present our new finite volume scheme for equation (\ref{eq}) and other existing schemes. We will then compare these schemes to our new one.

\subsection{Definition of the finite volume scheme}

We first define the space discretization of $\Omega$. A regular and admissible mesh of $\Omega$ is given by a family $\mathcal{T}$ of control volumes (open and convex polygons in 2-D, polyhedra in 3-D), a family $\mathcal{E}$ of edges in 2-D (faces in 3-D) and a family of points $(x_{K})_{K \in \mathcal{T}}$ which satisfy Definition 5.1 in \cite{Eymard2000}. It implies that the straight line between two neighboring centers of cells $(x_{K},x_{L})$ is orthogonal to the edge $\sigma=K|L$. \\
In the set of edges $ \mathcal{E}$, we distinguish the interior edges $\sigma \in \mathcal{E}_{int}$ and the boundary edges $\sigma \in \mathcal{E}_{ext}$. Because of the Dirichlet-Neumann boundary conditions, we split $\mathcal{E}_{ext}$ into $ \mathcal{E}_{ext}=\mathcal{E}_{ext}^{D} \cup \mathcal{E}_{ext}^{N}$ where $ \mathcal{E}_{ext}^{D}$ is the set of Dirichlet boundary edges and $ \mathcal{E}_{ext}^{N}$ is the set of Neumann boundary edges. For a control volume $K \in \mathcal{T}$, we denote by $\mathcal{E}_{K}$ the set of its edges, $\mathcal{E}_{int,K}$ the set of its interior edges, $\mathcal{E}_{ext,K}^{D}$ the set of edges of $K$ included in $\Gamma^{D}$ and $\mathcal{E}_{ext,K}^{N}$ the set of edges of $K$ included in $\Gamma^{N}$.\\
The size of the mesh is defined by 
\begin{equation*}
\Delta x=\max_{K \in \mathcal{T}}(\text{diam}(K)).
\end{equation*}
In the sequel, we denote by d the distance in $\mathbb{R}^{d}$ and m the measure in $\mathbb{R}^{d}$ or $ \mathbb{R}^{d-1}$. \\
We note for all $\sigma \in \mathcal{E}$
\begin{equation*}
d_{\sigma}=\left\{\begin{array}{lll} \text{d}(x_{K},x_{L}), & \text{ for } \sigma \in \mathcal{E}_{int},& \sigma=K|L,\\ \text{d}(x_{K},\sigma), & \text{ for } \sigma \in \mathcal{E}_{ext,K}.\end{array}\right.
\end{equation*}
For all $\sigma \in \mathcal{E}$, we define the transmissibility coefficient $\ds{\tau_{\sigma}=\frac{\text{m}(\sigma)}{d_{\sigma}}.}$\\
For $\sigma \in \mathcal{E}_{K}$, $\mathbf{n}_{K,\sigma}$ is the unit vector normal to $\sigma$ outward to $K$.\\
We may now define the finite volume approximation of the equation (\ref{eq})-(\ref{BCN}).\\
Let $(\mathcal{T},\mathcal{E},(x_{K})_{K \in \mathcal{T}})$ be an admissible discretization of $\Omega$ and let us define the time step $\Delta t$, $N_{T}=E(T/\Delta t)$ and the increasing sequence $(t^{n})_{0\leq n \leq N_{T}}$, where $t^{n}=n\Delta t$, in order to get a space-time discretization $\mathcal{D}$ of $\Omega \times (0,T)$. The size of the space-time discretization $\mathcal{D}$ is defined by:
\begin{equation*}
\delta =\max(\Delta x, \Delta t).
\end{equation*}
First of all, the initial condition is discretized by:
\begin{equation}
U^{0}_{K}=\frac{1}{\text{m}(K)}\int_{K}u_{0}(x)\,dx, \quad K \in \mathcal{T}.
\label{CId}
\end{equation}
In order to introduce the finite volume scheme, we also need to define the numerical boundary conditions:
\begin{equation}
U^{n+1}_{\sigma}=\frac{1}{\Delta t \, \text{m}(\sigma)}\int_{t^{n}}^{t^{n+1}}\int_{\sigma}\overline{u}(s,t)\,ds \, dt, \quad \sigma \in \mathcal{E}_{ext}^{D}, \,\ n\geq 0.
\label{BCd}
\end{equation}
We set 
\begin{equation}
q_{K,\sigma}=\frac{1}{\text{m}(\sigma)}\int_{\sigma}\mathbf{q}(x)\cdot \mathbf{n}_{K,\sigma}\,ds(x), \quad \forall K \in \mathcal{T}, \quad \forall \sigma \in \mathcal{E}_{K}.
\label{q}
\end{equation}
The finite volume scheme is obtained by integrating the equation (\ref{eq}) on each control volume and by using the divergence theorem. We choose a backward Euler discretization in time (in order to avoid a restriction on the time step of the form $\Delta t=O(\Delta x^{2})$). Then the scheme on $u$ is given by the following set of equations:
\begin{equation}
\text{m}(K)\frac{\uknp-\ukn}{\Delta t}+\sum_{\sigma \in \mathcal{E}_{K}} \mathcal{F}_{K,\sigma}^{n+1}=0,
\label{schemegene}
\end{equation}
where the numerical flux $\mathcal{F}_{K,\sigma}^{n+1}$ is an approximation of $\ds{-\int_{\sigma} (\nabla r(u)-\mathbf{q}u)\cdot \mathbf{n}_{K,\sigma}}$ which remains to be defined.

%%%%%%%%%%%%%%%%%%%%%%%%%%%%%%

\subsection{Definition of the numerical flux}

\subsubsection{Existing schemes}

We presented in introduction some numerical results obtained with different choices of numerical fluxes for the drift-diffusion system. We are now going to define precisely these fluxes.\\
\newline
\textbf{The classical upwind flux.} This flux was studied in \cite{Eymard2000} for a scalar convection-diffusion equation. It is valid both in the case of a linear diffusion and in the case of a nonlinear diffusion. The diffusion term is discretized classically by using a two-points flux and the convection term is discretized with the upwind flux, whose origin can be traced back to the work of R. Courant, E. Isaacson and M. Rees \cite{Courant1952}. This flux was then used for the drift-diffusion system for semiconductors in \cite{Chainais-Hillairet2003a} and \cite{Chainais-Hillairet2003,Chainais-Hillairet2004} in 1-D and in 2-D respectively. The definition of this flux is
\begin{equation}
\mathcal{F}_{K,\sigma}^{n+1}= \left\{\begin{array}{lll} \tau_{\sigma} \left(r\left(U_{K}^{n+1}\right)-r\left(U_{L}^{n+1}\right)+d_{\sigma}\left(q_{K,\sigma}^{+}U_{K}^{n+1}-q_{K,\sigma}^{-}U_{L}^{n+1}\right)\right), & &  \forall \sigma=K|L \in \mathcal{E}_{int,K},\\ \tau_{\sigma} \left(r\left(U_{K}^{n+1}\right)-r\left(U_{\sigma}^{n+1}\right)+d_{\sigma}\left(q_{K,\sigma}^{+}U_{K}^{n+1}-q_{K,\sigma}^{-}U_{\sigma}^{n+1}\right)\right),& &  \forall \sigma \in \mathcal{E}_{ext,K}^{D},\\ 0, & &  \forall \sigma \in \mathcal{E}_{ext,K}^{N}, \end{array}\right.
\label{upwindclass}
\end{equation}
where $s^{+}=\max(s,0)$ and $s^{-}=\max(-s,0)$ are the positive and negative parts of a real number $s$. \\
\textbf{The upwind flux with nonlinear discretization of the diffusion term.} This flux was introduced in \cite{Chainais-Hillairet2007} in the context of the drift-diffusion system for semiconductors. The idea is to write the flux $\ds{-\int_{\sigma} (\nabla r(u)-\textbf{q}u)\cdot \mathbf{n}_{K,\sigma}}$ as $\ds{-\int_{\sigma} (u \nabla h(u)-\textbf{q}u)\cdot \mathbf{n}_{K,\sigma}}$, where $h$ is the enthalpy function defined by (\ref{h}). The flux is then defined with a standard upwinding for the convective term and a nonlinear approximation for the diffusive term:
\begin{eqnarray*}
& &\mathcal{F}_{K,\sigma}^{n+1}= \\
& &\left\{\begin{array}{lll}-\tau_{\sigma}\left(\min\left(U_{K}^{n+1},U_{L}^{n+1}\right)Dh\left(U^{n+1}\right)_{K,\sigma}+d_{\sigma}\left(q_{K,\sigma}^{+}U_{K}^{n+1}-q_{K,\sigma}^{-}U_{L}^{n+1}\right)\right), & & \forall \sigma=K|L,\\ -\tau_{\sigma}\left(\min\left(U_{K}^{n+1},U_{\sigma}^{n+1}\right)Dh\left(U^{n+1}\right)_{K,\sigma}+d_{\sigma}\left(q_{K,\sigma}^{+}U_{K}^{n+1}-q_{K,\sigma}^{-}U_{\sigma}^{n+1}\right)\right), & & \forall \sigma \in \mathcal{E}_{ext,K}^{D}, \\ 0, & &  \forall \sigma \in \mathcal{E}_{ext,K}^{N}, \end{array}\right.
\end{eqnarray*}
where for a given function $f$, $Df(U)_{K,\sigma}$ is defined by
\begin{equation*}
Df(U)_{K,\sigma}=\left\{\begin{array}{lll} f(U_{L})-f(U_{K}), & & \text{ if } \sigma=K|L \in \mathcal{E}_{K,int}, \\ f(U_{\sigma})-f(U_{K}), & & \text{ if } \sigma \in \mathcal{E}_{K,ext}^{D}, \\ 0, & & \text{ if } \sigma \in \mathcal{E}_{K,ext}^{N}.\end{array}\right.
\end{equation*}
This flux preserves the thermal equilibrium and it is proved that the numerical solution converges to this equilibrium when time goes to infinity.\\
\textbf{The Scharfetter-Gummel flux.} This flux is widely used in the semiconductors framework in the case of a linear diffusion, namely $r(s)=s$. It has been proposed by D.L. Scharfetter and H.K. Gummel in \cite{Scharfetter1969} for the numerical approximation of the one-dimensional drift-diffusion model. We also refer to the work of A.M. Il'in \cite{Il'in1969}, where the same kind of flux was introduced for one-dimensional finite-difference schemes. The Scharfetter-Gummel flux preserves steady-state, and is second order accurate in space (see \cite{Lazarov1996}). It is defined by:
\begin{equation*}
\mathcal{F}_{K,\sigma}^{n+1}=\left\{\begin{array}{lll}\tau_{\sigma}\left(B(-d_{\sigma}q_{K,\sigma})U_{K}^{n+1}-B(d_{\sigma}q_{K,\sigma})U_{L}^{n+1}\right), & &  \forall \sigma=K|L \in \mathcal{E}_{K,int},\\ \tau_{\sigma}\left(B(-d_{\sigma}q_{K,\sigma})U_{K}^{n+1}-B(d_{\sigma}q_{K,\sigma})U_{\sigma}^{n+1}\right), & &  \forall \sigma \in \mathcal{E}_{K,ext}^{D},\\ 0, & &  \forall \sigma \in \mathcal{E}_{K,ext}^{N},\end{array}\right.
\end{equation*}
where $B$ is the Bernoulli function defined by
\begin{equation*}
B(x)=\frac{x}{e^{x}-1} \text{ for } x \neq 0, \quad B(0)=1.
\end{equation*}

%%%%%%%%%

\subsubsection{Extension of the Scharfetter-Gummel flux}

Now we will extend the Scharfetter-Gummel flux to the case of a nonlinear diffusion. Firstly, if we consider the linear case with a viscosity coefficient $\varepsilon>0$, namely 
\begin{equation*}
\pa_{t}u-\text{div}(\varepsilon \nabla u-\mathbf{q}u)=0 \text{ for } (x,t) \in \Omega \times (0,T),
\end{equation*}
then the Scharfetter-Gummel flux is defined by:
\begin{equation}
\mathcal{F}_{K,\sigma}^{n+1}= \tau_{\sigma} \varepsilon \left(B\left(\frac{-d_{\sigma}q_{K,\sigma}}{\varepsilon}\right)U_{K}^{n+1}-B\left(\frac{d_{\sigma}q_{K,\sigma}}{\varepsilon}\right)U_{L}^{n+1}\right) \quad \forall \sigma=K|L \in \mathcal{E}_{int,K}.
\label{sgvisc}
\end{equation}
Using the following properties of the Bernoulli function:
\begin{equation*}
B(s) \underset{s \rightarrow +\infty}{\longrightarrow} 0 \text{ and } B(s) \underset{-\infty}{\sim} -s,
\end{equation*}
it is clear that if $\varepsilon$ tends to zero, this flux degenerates into the classical upwind flux for the transport equation $\pa_{t}u-\text{div}(\textbf{q}u)=0$:
\begin{equation}
\mathcal{F}_{K,\sigma}^{n+1}= \text{m}(\sigma)\left(q_{K,\sigma}^{+}U_{K}^{n+1}-q_{K,\sigma}^{-}U_{L}^{n+1}\right) \quad \forall \sigma=K|L \in \mathcal{E}_{int,K}.
\label{upwindtransport}
\end{equation}
Now considering a nonlinear diffusion, we can write $\nabla r(u)$ as $r'(u) \nabla u$. We denote by $dr_{K,\sigma}$ an approximation of $r'(u)$ at the interface $\sigma \in \mathcal{E}_{K}$, which will be defined later. We consider this term as a viscosity coefficient and then, using (\ref{sgvisc}), we extend the Scharfetter-Gummel flux by defining:
\begin{equation}
\mathcal{F}_{K,\sigma}^{n+1}=\left\{\begin{array}{lll} \ds{\tau_{\sigma} dr_{K,\sigma} \left(B\left(\frac{-d_{\sigma}q_{K,\sigma}}{dr_{K,\sigma}}\right)U_{K}^{n+1}-B\left(\frac{d_{\sigma}q_{K,\sigma}}{dr_{K,\sigma}}\right)U_{L}^{n+1}\right),}& &  \forall \sigma=K|L \in \mathcal{E}_{int,K}, \\ \ds{\tau_{\sigma} dr_{K,\sigma} \left(B\left(\frac{-d_{\sigma}q_{K,\sigma}}{dr_{K,\sigma}}\right)U_{K}^{n+1}-B\left(\frac{d_{\sigma}q_{K,\sigma}}{dr_{K,\sigma}}\right)U_{\sigma}^{n+1}\right),}& &  \forall \sigma \in \mathcal{E}_{ext,K}^{D}, \\ 0, & & \forall \sigma \in \mathcal{E}_{ext,K}^{N}. \end{array}\right.
\label{sgext}
\end{equation}
In the degenerate case, $dr_{K,\sigma}$ can vanish and then this flux degenerates into the upwind flux (\ref{upwindtransport}). Now it remains to define $dr_{K,\sigma}$.\\
\newline
\textbf{Definition of $dr_{K,\sigma}$.} A first possibility is to take the value of $r'$ at the average of $U_{K}$ and $U_{\sigma}$:
\begin{equation}
dr_{K,\sigma}=\left\{\begin{array}{lll} \ds{r'\left(\frac{U_{K}+U_{L}}{2}\right),} & & \forall \sigma=K|L \in \mathcal{E}_{int,K}, \\ \ds{r'\left(\frac{U_{K}+U_{\sigma}}{2}\right),} & & \forall \sigma \in \mathcal{E}_{ext,K}^{D}. \end{array}\right.
\label{dr1}
\end{equation}
This choice is quite close to the one of A. Jüngel and P. Pietra (see \cite{Juengel1995,Juengel1997}). However, considering the numerical results presented in the introduction, it seems important that the numerical flux preserves the equilibrium. Therefore, we define the function $dr$ as follows: for $a$, $b \in \mathbb{R}_{+}$,
\begin{equation}
dr(a,b)=\left\{ \begin{array}{rll} \ds{\frac{h(b)-h(a)}{\log(b)-\log(a)}} & &\text{ if } ab>0 \text{ and } a \neq b,\\ \ds{r'\left(\frac{a+b}{2}\right)} & &\text{ elsewhere, } \end{array} \right.
\label{defdr}
\end{equation}
and we set for all $K \in \mathcal{T}$
\begin{equation}
dr_{K,\sigma}=\left\{\begin{array}{lll} dr(U_{K},U_{L}), & & \text{ for } \sigma=K|L \in \mathcal{E}_{K,int}, \\ dr(U_{K},U_{\sigma}), & & \text{ for } \sigma \in \mathcal{E}_{K,ext}^{D}.\end{array}\right.
\label{dr2}
\end{equation}

\begin{remark}
Let $K \in \mathcal{T}$ and $\sigma \in \mathcal{E}_{K}$. We assume that $dr_{K,\sigma}$ is defined by (\ref{dr2}) in (\ref{sgext}) and that $U_{K}>0$ and $U_{\sigma}>0$. If $d_{\sigma}q_{K,\sigma}=Dh(U)_{K,\sigma}$, then $\mathcal{F}_{K,\sigma}=0$.\\
Indeed, 
\begin{eqnarray*}
\mathcal{F}_{K,\sigma} & = & \tau_{\sigma}dr_{K,\sigma}\left(B\left(-\frac{Dh(U)_{K,\sigma}}{dr_{K,\sigma}}\right)U_{K}-B\left(\frac{Dh(U)_{K,\sigma}}{dr_{K,\sigma}}\right)U_{\sigma}\right)\\
&=& \tau_{\sigma}Dh(U)_{K,\sigma}\left(\frac{\exp\left(\ds{\frac{Dh(U)_{K,\sigma}}{dr_{K,\sigma}}}\right)U_{K}-U_{\sigma}}{\exp\left(\ds{\frac{Dh(U)_{K,\sigma}}{dr_{K,\sigma}}}\right)-1}\right).
\end{eqnarray*}
But using the definition (\ref{defdr}) of $dr$, we obtain
\begin{equation*}
\exp\left(\ds{\frac{Dh(U)_{K,\sigma}}{dr_{K,\sigma}}}\right)=\frac{U_{\sigma}}{U_{K}},
\end{equation*}
and then $ \mathcal{F}_{K,\sigma}=0$. Thus the scheme preserves this type of steady-state.
\end{remark}
\textbf{Time discretization.} We choose an explicit expression of $dr_{K,\sigma}$:
\begin{equation}
dr_{K,\sigma}^{n}=\left\{\begin{array}{lll} dr(U_{K}^{n},U_{L}^{n}), & & \text{ for } \sigma=K|L \in \mathcal{E}_{K,int}, \\ dr(U_{K}^{n},U_{\sigma}^{n}), & & \text{ for } \sigma \in \mathcal{E}_{K,ext}^{D}.\end{array}\right.
\label{dr3}
\end{equation}
Thus we obtain a scheme which leads only to solve a linear system of equations at each time step.\\
To sum up, our extension of the Scharfetter-Gummel flux is defined by
\begin{equation}
\mathcal{F}_{K,\sigma}^{n+1}=\left\{\begin{array}{lcl} \tau_{\sigma} dr_{K,\sigma}^{n} \left(B\left(\ds{\frac{-d_{\sigma}q_{K,\sigma}}{dr_{K,\sigma}^{n}}}\right)U_{K}^{n+1}-B\left(\ds{\frac{d_{\sigma}q_{K,\sigma}}{dr_{K,\sigma}^{n}}}\right)U_{L}^{n+1}\right), & & \forall \sigma=K|L \in \mathcal{E}_{K,int}, \\ \tau_{\sigma} dr_{K,\sigma}^{n} \left(B\left(\ds{\frac{-d_{\sigma}q_{K,\sigma}}{dr_{K,\sigma}^{n}}}\right)U_{K}^{n+1}-B\left(\ds{\frac{d_{\sigma}q_{K,\sigma}}{dr_{K,\sigma}^{n}}}\right)U_{\sigma}^{n+1}\right), & & \forall \sigma \in \mathcal{E}_{K,ext}^{D}, \\ 0, & & \forall \sigma \in \mathcal{E}_{K,ext}^{N},\end{array}\right.
\label{flux}
\end{equation}
where $dr_{K,\sigma}^{n}$ is defined by (\ref{dr3}). This flux preserves the equilibrium.

%%%%%%%%%%%%%%%%%%%%%%%%%%%%%%%%%%%%%%

\subsection{Consistency of the numerical flux}

\begin{lemma}
Let $a$, $b \in \mathbb{R}$, $a,b \geq 0$. Then there exists $\eta \in [\min(a,b),\max(a,b)]$ such that 
\begin{equation*}
dr(a,b)=r'(\eta).
\end{equation*}
\label{propdr}
\end{lemma}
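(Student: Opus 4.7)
The plan is to split the proof according to the two branches in the definition (\ref{defdr}) of $dr$ and, in each case, exhibit an explicit $\eta \in [\min(a,b),\max(a,b)]$ with $dr(a,b)=r'(\eta)$.

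In the ``elsewhere'' branch, we have either $a=b$ or $ab=0$. Either way $dr(a,b)=r'((a+b)/2)$ by definition, and the point $\eta=(a+b)/2$ lies in $[\min(a,b),\max(a,b)]$, so the conclusion is immediate.

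The main case is $ab>0$ and $a\neq b$, say $0<a<b$. Here I would invoke the Cauchy mean value theorem applied to the pair of functions $h$ and $\log$ on the interval $[a,b]$. Both are $C^{1}$ on $(0,+\infty)$, with $(\log)'(s)=1/s\neq 0$ and, from the definition (\ref{h}) of $h$, $h'(s)=r'(s)/s$. The Cauchy mean value theorem then yields some $\eta\in(a,b)$ such that
\begin{equation*}
dr(a,b)=\frac{h(b)-h(a)}{\log(b)-\log(a)}=\frac{h'(\eta)}{(\log)'(\eta)}=\frac{r'(\eta)/\eta}{1/\eta}=r'(\eta),
\end{equation*}
which is exactly the claimed identity with $\eta\in[\min(a,b),\max(a,b)]$.

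There is essentially no obstacle: the proof is a one-line application of Cauchy's mean value theorem, and the only thing to watch is to verify that the hypothesis $ab>0$ puts us safely on the domain where $\log$ and $h$ are smooth and $(\log)'$ does not vanish, so that the theorem applies.
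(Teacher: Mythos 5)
Your proof is correct and matches the paper's argument in substance: the paper handles the case $ab>0$, $a\neq b$ by the change of variables $x=\log a$, $y=\log b$ followed by the mean value theorem applied to $t\mapsto h(e^{t})$, which is precisely the instance of Cauchy's mean value theorem for the pair $(h,\log)$ that you invoke, and the verification that $h'(s)=r'(s)/s$ with $\log$ smooth and of nonvanishing derivative on $[a,b]$ is the same in both. The ``elsewhere'' branch is likewise handled identically, via $\eta=(a+b)/2\in[\min(a,b),\max(a,b)]$.
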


\begin{proof}
The result is clear if $ab=0$ or $a=b$. Let us suppose that $ab>0$ and $a < b$ (the proof is the same if $a>b$). If we consider the change of variables $x=\log(a)$ and $y=\log(b)$, we obtain
\begin{equation*}
dr(a,b)=\frac{h(\exp(y))-h(\exp(x))}{y-x}
\end{equation*}
and using Taylor's formula, there exists $\theta \in [x,y]$ such that 
\begin{equation*}
dr(a,b)=\exp(\theta)h'(\exp(\theta))=r'(\exp(\theta)) \text{ (using the definition of $h$). }
\end{equation*}
Finally, there exists $\eta=\exp(\theta) \in [a,b]$ such that 
\begin{equation*}
dr(a,b)=r'(\eta).
\end{equation*}
\end{proof}

\begin{remark}
The flux (\ref{flux}) can also be written as
\begin{equation}
\mathcal{F}_{K,\sigma}^{n+1}=\text{m}(\sigma)q_{K,\sigma}\frac{U_{K}^{n+1}+U^{n+1}_{\sigma}}{2}-\frac{\text{m}(\sigma)q_{K,\sigma}}{2}\coth \left(\frac{d_{\sigma}q_{K,\sigma}}{2dr^{n}_{K,\sigma}}\right)(U^{n+1}_{\sigma}-U^{n+1}_{K}).
\label{flux2}
\end{equation}
The first term is a centred discretization of the convective part. The second term is consistent with the diffusive part of equation (\ref{eq}), since $\ds{\coth(x)\underset{0}{\sim}\frac{1}{x}}$.
\end{remark}

%%%%%%%%%%%%%%%%%%%%%%%%%%%%%%%%%%%%%%%%%%%%%%%%%%%%%%%%%%%%%%%

\section{Properties of the scheme}

\subsection{Well-posedness of the scheme}

The following proposition gives the existence and uniqueness result of the solution to the scheme defined by (\ref{CId})-(\ref{BCd})-(\ref{schemegene})-(\ref{flux}) and an $L^{\infty}$-estimate on this solution. 

\begin{proposition}
Let us assume hypotheses (H1)-(H5). Let $\mathcal{D}$ be an admissible discretization of $\Omega \times (0,T)$. Then there exists a unique solution $\lbrace U_{K}^{n},K \in \mathcal{T},0\leq n \leq N_{T}\rbrace$ to the scheme (\ref{CId})-(\ref{BCd})-(\ref{schemegene})-(\ref{flux}), with $U_{K}^{n} \geq 0$ for all $K \in \mathcal{T}$ and $0\leq n \leq N_{T}$.\\
Moreover, if we suppose that the two following assumptions are fulfilled:
\begin{description}
\item[(H6)] $ \emph{div} (\mathbf{q})=0$,
\item[(H7)] there exist two constants $m>0$ and $M>0$ such that $m\leq \overline{u},u_{0}\leq M$,
\end{description}
then we have
\begin{equation}
0 <m \leq U^{n}_{K} \leq M, \quad \forall K \in \mathcal{T}, \quad \forall n \geq 0.
\label{estimlinf}
\end{equation}
\end{proposition}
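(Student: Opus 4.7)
The plan is to proceed by induction on $n$ and to view the scheme (\ref{schemegene})-(\ref{flux}) at each time step as a linear system $A^{n}U^{n+1}=b^{n}$ in the unknowns $\{\uknp:K\in\mathcal{T}\}$. The key is to prove that $A^{n}$ is a nonsingular M-matrix: this simultaneously gives existence and uniqueness (invertibility of $A^{n}$) and the inverse-nonnegativity $(A^{n})^{-1}\geq 0$, from which $\uknp\geq 0$ and, under (H6)-(H7), the bound (\ref{estimlinf}) will follow by comparing suitably translated unknowns to zero. The base case $n=0$ is immediate from (\ref{CId}) together with (H3) (and (H7) when needed).

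Assume $\ukn\geq 0$. By Lemma \ref{propdr} and (H4), $dr_{K,\sigma}^{n}\geq 0$. Since $B>0$ on $\mathbb{R}$, the diagonal entries
$$A^{n}_{KK}=\frac{\text{m}(K)}{\Delta t}+\sum_{\sigma\in\mathcal{E}_{K}}\tau_{\sigma}dr_{K,\sigma}^{n}\,B\!\left(-\frac{d_{\sigma}q_{K,\sigma}}{dr_{K,\sigma}^{n}}\right)$$
are strictly positive, while the off-diagonal entries (for $L$ a neighbor of $K$ across $\sigma=K|L$, using $q_{L,\sigma}=-q_{K,\sigma}$ and the symmetry $dr_{L,\sigma}^{n}=dr_{K,\sigma}^{n}$) satisfy $A^{n}_{LK}=-\tau_{\sigma}dr_{K,\sigma}^{n}B(-d_{\sigma}q_{K,\sigma}/dr_{K,\sigma}^{n})\leq 0$. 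The key algebraic identity $B(-x)-B(x)=x$ then causes the diagonal and off-diagonal contributions of each interior edge to the same column to cancel, leaving
$$\sum_{L\in\mathcal{T}}A^{n}_{LK}=\frac{\text{m}(K)}{\Delta t}+\sum_{\sigma\in\mathcal{E}_{ext,K}^{D}}\tau_{\sigma}dr_{K,\sigma}^{n}\,B\!\left(-\frac{d_{\sigma}q_{K,\sigma}}{dr_{K,\sigma}^{n}}\right)\geq\frac{\text{m}(K)}{\Delta t}>0.$$
This is strict column diagonal dominance with a Z-matrix sign pattern, so $A^{n}$ is a nonsingular M-matrix and $(A^{n})^{-1}\geq 0$ entrywise. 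The right-hand side $b^{n}_{K}=(\text{m}(K)/\Delta t)\,\ukn+\sum_{\sigma\in\mathcal{E}_{ext,K}^{D}}\tau_{\sigma}dr_{K,\sigma}^{n}B(d_{\sigma}q_{K,\sigma}/dr_{K,\sigma}^{n})\,U_{\sigma}^{n+1}$ is nonnegative by the induction hypothesis and (H2), so $U^{n+1}=(A^{n})^{-1}b^{n}\geq 0$, closing this part of the induction.

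For (\ref{estimlinf}) I would introduce $V_{K}^{n}:=\ukn-m$ and $W_{K}^{n}:=M-\ukn$. The same identity $B(-x)-B(x)=x$ shows that substituting $U\mapsto U+c\mathbf{1}$ in (\ref{flux}) shifts each $\mathcal{F}_{K,\sigma}^{n+1}$ by $c\,\text{m}(\sigma)q_{K,\sigma}$, so summing over $\sigma\in\mathcal{E}_{K}$ gives a total shift of $c\int_{K}\text{div}(\mathbf{q})\,dx$, which vanishes under (H6). Hence $V^{n+1}$ and $W^{n+1}$ solve linear systems with the same matrix $A^{n}$ and with right-hand sides assembled from the shifted quantities $V_{K}^{n}, V_{\sigma}^{n+1}\geq 0$ and $W_{K}^{n}, W_{\sigma}^{n+1}\geq 0$ (by (H7) and the induction). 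Applying $(A^{n})^{-1}\geq 0$ once more yields $V^{n+1},W^{n+1}\geq 0$, i.e.\ $m\leq\uknp\leq M$, with strict positivity on the left since $m>0$.

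The main obstacle is the bookkeeping for the degenerate case $dr_{K,\sigma}^{n}=0$: the ratio $d_{\sigma}q_{K,\sigma}/dr_{K,\sigma}^{n}$ is undefined inside $B$, and (\ref{flux}) has to be read via its limit, which by the asymptotics of $B$ recalled just before (\ref{upwindtransport}) is exactly the upwind flux (\ref{upwindtransport}); one must check that this limiting form preserves both the Z-matrix sign pattern and the edge-by-edge column-sum cancellation, which it does by direct inspection based on the signs of $q_{K,\sigma}^{\pm}$. A secondary subtlety is tracking the precise role of (H6): existence, uniqueness, and nonnegativity of $U^{n}$ are obtained without it, but (\ref{estimlinf}) genuinely requires it, since only under $\text{div}(\mathbf{q})=0$ is the scheme invariant under translations of $U$ by constants, which is what permits reusing the same matrix $A^{n}$ to control both $V$ and $W$.
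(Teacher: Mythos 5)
Your proposal is correct and follows essentially the same route as the paper: write each time step as a linear system, show $A^{n}$ is a column-diagonally-dominant M-matrix (so $(A^{n})^{-1}\geq 0$), get nonnegativity by induction, and use the identity $B(x)-B(-x)=-x$ together with $\mathrm{div}(\mathbf{q})=0$ for the bounds $m\leq U\leq M$. Your phrasing of the last step as translation invariance of the scheme under $U\mapsto U+c\mathbf{1}$ is just a repackaging of the paper's direct computation of $A^{n}(U^{n+1}-\mathbf{M})$; the only small inaccuracy is that the interior-edge cancellation in the column sums is an exact term-by-term cancellation via $q_{L,\sigma}=-q_{K,\sigma}$ and $dr^{n}_{K,\sigma}=dr^{n}_{L,\sigma}$, not a consequence of the identity $B(-x)-B(x)=x$, which is only needed for the $L^\infty$ bound.
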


\begin{proof}
At each time step, the scheme (\ref{CId})-(\ref{BCd})-(\ref{schemegene})-(\ref{flux}) leads to a system of card$(\mathcal{T})$ linear equations on $U^{n+1}=(U^{n+1}_{K})_{K \in \mathcal{T}}$ which can be written:
\begin{equation*}
A^{n}U^{n+1}=S^{n},
\end{equation*}
where :
\begin{itemize}
\item[$\bullet$] $A^{n}$ is the matrix defined by
\begin{alignat*}{3}
&A^{n}_{K,K} & = &  \frac{\text{m}(K)}{\Delta t}+\sum_{\sigma \in \mathcal{E}_{K}}\tau_{\sigma}dr^{n}_{K,\sigma}B\left(\frac{-d_{\sigma}q_{K,\sigma}}{dr^{n}_{K,\sigma}}\right) \,\ \forall K \in \mathcal{T},\\
&A^{n}_{K,L} & = & -\tau_{\sigma}dr^{n}_{K,\sigma}B\left(\frac{d_{\sigma}q_{K,\sigma}}{dr^{n}_{K,\sigma}}\right) \,\ \forall L \in \mathcal{T} \text{ such that } \sigma=K|L \in \mathcal{E}_{int,K};
\end{alignat*}
\item[$\bullet$] $\ds{S^{n}=\left(\frac{\text{m}(K)}{\Delta t}U^{n}_{K}\right)_{K \in \mathcal{T}}+Tb^{n}}$, with
\begin{equation*}
Tb^{n}_{K} = \left\{\begin{array}{ll} 0  & \text{ if } K \in \mathcal{T} \text{ such that } \text{m}(\pa K\cap \Gamma)=0,\\ \ds{\sum_{\sigma \in \mathcal{E}_{ext,K}^{D}}\tau_{\sigma}dr^{n}_{K,\sigma}B\left(\ds{\frac{d_{\sigma}q_{K,\sigma}}{dr^{n}_{K,\sigma}}}\right)U^{n+1}_{\sigma}}  & \text{ if } K \in \mathcal{T} \text{ such that } \text{m}(\pa K\cap \Gamma)\neq 0. \end{array}\right.
\end{equation*}
\end{itemize}
The diagonal terms of $A^{n}$ are positive and the offdiagonal terms are nonnegative (since $B(x)>0$ for all $x \in \mathbb{R}$ and $dr^{n}_{K,\sigma} \geq 0$ for all $K \in \mathcal{T}$, for all $\sigma \in \mathcal{E}_{K}$). Moreover, since $dr^{n}_{K,\sigma}=dr_{L,\sigma}^{n}$ and $q_{K,\sigma}=-q_{L,\sigma}$ for all $ \sigma=K|L \in \mathcal{E}_{int}$, we have for all $L \in \mathcal{T}$:
\begin{equation*}
\left|A^{n}_{L,L}\right|-\sum_{\substack{K \in \mathcal{T} \\ K \neq L}}\left|A^{n}_{K,L}\right|=\frac{\text{m}(L)}{\Delta t}>0,
\end{equation*}
and then $A^{n}$ is strictly diagonally dominant with respect to the columns. $A^{n}$ is then an M-matrix so $A^{n}$ is invertible, which gives existence and uniqueness of the solution of the scheme. Moreover, $(A^{n})^{-1}\geq 0$ and since $U_{K}^{0} \geq 0$ for all $K\in \mathcal{T}$ (using (H3)) and $U^{n+1}_{\sigma} \geq 0$ for all $n \geq 0$, for all $\sigma \in \mathcal{E}_{ext}^{D}$ (using (H2)), it is easy to prove by induction that $U^{n}_{K} \geq 0$ for all $K \in \mathcal{T}$, for all $n \geq 0$.\\
Now, we suppose that (H6) and (H7) are fulfilled. We prove that $U^{n}_{K}\leq M $ for all $K \in \mathcal{T}$, for all $n \geq 0$ by induction. Thanks to hypothesis (H7), we have clearly $U^{0}_{K}\leq M $ for all $ K \in \mathcal{T}$.\\
Let us suppose that $U^{n}_{K}\leq M \quad \forall K \in \mathcal{T}$. We want to prove $U^{n+1}_{K}\leq M \quad \forall K \in \mathcal{T}$.\\
Let us define $\textbf{M}=(M,...,M)^{T} \in \mathbb{R}^{\text{card}(\mathcal{T})}$. Since $A^{n}$ is an M-matrix, we have $(A^{n})^{-1} \geq 0$ and then it suffices to prove that $A^{n}\left(U^{n+1}-\textbf{M}\right) \leq 0$. \\
We first compute $A^{n}\textbf{M}$. Using the following property of the Bernoulli function:
\begin{equation}
B(x)-B(-x)=-x \quad \forall x \in \mathbb{R},
\label{propriB}
\end{equation}
we obtain that for all $K \in \mathcal{T}$,
\begin{equation*}
\left(A^{n}\textbf{M}\right)_{K}=M\left(\frac{\text{m}(K)}{\Delta t}+\sum_{\sigma \in \mathcal{E}_{int,K}}\text{m}(\sigma)q_{K,\sigma}+\sum_{\sigma \in \mathcal{E}_{ext,K}^{D}}\tau_{\sigma}dr^{n}_{K,\sigma}B\left(-\frac{d_{\sigma}q_{K,\sigma}}{dr^{n}_{K,\sigma}}\right)\right).
\end{equation*}
Then we compute $A^{n}\left(U^{n+1}-\textbf{M}\right)$: for all $K \in \mathcal{T}$
\begin{eqnarray*}
\left(A^{n}\left(U^{n+1}-\textbf{M}\right)\right)_{K} &=& \frac{\text{m}(K)}{\Delta t}(U^{n}_{K}-M)+ \sum_{\sigma \in \mathcal{E}_{ext,K}^{D}}\tau_{\sigma}dr^{n}_{K,\sigma}B\left(\frac{d_{\sigma}q_{K,\sigma}}{dr^{n}_{K,\sigma}}\right)U^{n+1}_{\sigma} \\
& & -M \sum_{\sigma \in \mathcal{E}_{int,K}}\text{m}(\sigma)q_{K,\sigma}-M\sum_{\sigma \in \mathcal{E}_{ext,K}^{D}}\tau_{\sigma}dr^{n}_{K,\sigma}B\left(-\frac{d_{\sigma}q_{K,\sigma}}{dr^{n}_{K,\sigma}}\right).
\end{eqnarray*}
By induction hypothesis, the first term is nonpositive. Moreover, using hypothesis (H7) and the property (\ref{propriB}), we obtain
\begin{eqnarray*}
\left(A^{n}\left(U^{n+1}-\textbf{M}\right)\right)_{K} &\leq &-M \sum_{\sigma \in \mathcal{E}_{int,K}}\text{m}(\sigma)q_{K,\sigma} -M \sum_{\sigma \in \mathcal{E}_{ext,K}^{D}}\text{m}(\sigma)q_{K,\sigma}\\
&\leq & -M \sum_{\sigma \in \mathcal{E}_{K}}\text{m}(\sigma)q_{K,\sigma}.
\end{eqnarray*}
However, using hypothesis (H6) and the definition of $q_{K,\sigma}$ (\ref{q}), we get
\begin{equation*}
\sum_{\sigma \in \mathcal{E}_{K}}\text{m}(\sigma)q_{K,\sigma}=\sum_{\sigma \in \mathcal{E}_{K}}\int_{\sigma}q \cdot \mathbf{n}_{K,\sigma}\, ds= \int_{K}\text{div}(q)=0,
\end{equation*}
and then $\left(A^{n}\left(U^{n+1}-\textbf{M}\right)\right)_{K} \leq 0$ for all $K \in \mathcal{T}$.\\
So we have $A^{n}\left(U^{n+1}-\textbf{M}\right) \leq 0$, therefore we deduce that $U^{n+1}-\textbf{M} \leq 0$, hence $U^{n+1}_{K} \leq M \quad \forall K$ and we can show by the same way that $U^{n+1}_{K} \geq m \quad \forall K$.
\end{proof}

\begin{remark}
In the case of the drift-diffusion system for semiconductors, the hypothesis (H6) is not fulfilled ($ \Delta V \neq 0$). Nevertheless, if we assume that
\begin{itemize}
\item the doping profile $C$ is equal to $0$,
\item there exist two constants $m>0$ and $M>0$ such that $m\leq \overline{N},N_{0},\overline{P},P_{0}\leq M$,
\item $M \Delta t \leq 1$,
\end{itemize}
then we have, using the same kind of proof as in \cite{Chainais-Hillairet2003},
\begin{eqnarray*}
0 <m \leq N^{n}_{K} \leq M, & &\quad \forall K \in \mathcal{T}, \quad \forall n \geq 0,\\
0 <m \leq P^{n}_{K} \leq M, & &\quad \forall K \in \mathcal{T}, \quad \forall n \geq 0.
\end{eqnarray*}
\end{remark}

\begin{definition}
Let $ \mathcal{D}$ be an admissible discretization of $ \Omega \times (0,T)$. The approximate solution to the problem (\ref{eq})-(\ref{CI})-(\ref{BCD})-(\ref{BCN}) associated to the discretization $\mathcal{D}$ is defined as piecewise constant function by:
\begin{equation}
u_{\delta}(x,t) =  U^{n+1}_{K},  \quad \forall(x,t)\in K\times[t^{n},t^{n+1}[, \label{defiuapp}
\end{equation}
where $ \lbrace U^{n}_{K},K \in \mathcal{T},0\leq n \leq N_{T}\rbrace $ is the unique solution to the scheme (\ref{CId})-(\ref{BCd})-(\ref{schemegene})-(\ref{flux}).
\end{definition}

%%%%%%%%%%%%%%%%%%%%%%%%%%

\subsection{Discrete $L^{2}\left(0,T;H^{1}\right)$ estimate on $u_{\delta}$}

In this section, we prove a discrete $L^{2}\left(0,T;H^{1}\right)$ estimate on $u_{\delta}$ in the nondegenerate case, which leads to compactness and convergence results.\\
For a piecewise constant function $v_{\delta}$ defined by $v_{\delta}(x,t)=v_{K}^{n+1}$ for $(x,t) \in K \times [t^{n},t^{n+1}[$ and $v_{\delta}(\gamma,t)=v_{\sigma}^{n+1}$ for $(\gamma,t) \in \sigma \times [t^{n},t^{n+1}[$, we define
\begin{equation*}
\Vert v_{\delta} \Vert_{1,\mathcal{D}}^{2}=\sum_{n=0}^{N_{T}}\Delta t\left( \sum_{\substack{\sigma \in \mathcal{E}_{int} \\ \sigma=K|L}} \tau_{\sigma} \left\vert v^{n+1}_{L}-v^{n+1}_{K}\right\vert^{2} + \sum_{K \in \mathcal{T}}\sum_{\sigma \in \mathcal{E}_{ext,K}^{D}} \tau_{\sigma} \left\vert v^{n+1}_{\sigma}-v^{n+1}_{K}\right\vert^{2} \right).
\end{equation*}

\begin{proposition}
\label{propestiml2}
Let assume (H1)-(H7) are satisfied. Let $u_{\delta}$ be defined by the scheme (\ref{CId})-(\ref{BCd})-(\ref{schemegene})-(\ref{flux}) and (\ref{defiuapp}).\\
There exists $D_{1}>0$ only depending on $r$, $\textbf{q}$, $u_{0}$, $\overline{u}$, $\Omega$ and $T$ such that
\begin{equation}
\Vert u_{\delta} \Vert_{1,\mathcal{D}}^{2} \leq D_{1}.
\label{estiml2}
\end{equation}
\end{proposition}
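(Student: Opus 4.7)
The plan is to test the scheme against a discrete analogue of $u-\overline{u}$, perform discrete integration by parts, and exploit the coercivity hidden in the reformulation (\ref{flux2}) of the flux.

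First I would introduce the test function $V_K^{n+1} := U_K^{n+1} - \overline{u}_K^{n+1}$, where $\overline{u}_K^{n+1}$ denotes a space-time cell average of an $H^1(\Omega\times(0,T))$-extension of the Dirichlet data (this exists by (H2)). I then multiply the scheme (\ref{schemegene}) by $\Delta t\, V_K^{n+1}$ and sum over $K\in\mathcal{T}$ and $n=0,\dots,N_T-1$. Discrete integration by parts reorganises the flux contribution into a sum over edges $\sigma$, with $V_K^{n+1}$ replaced by the jump $V_K^{n+1}-V_L^{n+1}$ on interior edges and by $V_K^{n+1}-V_\sigma^{n+1}$ on Dirichlet edges (where $V_\sigma^{n+1}=0$ by (\ref{BCd})).

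For the time-derivative term I would use $a(a-b)\geq \tfrac12(a^2-b^2)$ applied to $(U_K^{n+1}-U_K^n)U_K^{n+1}$, together with telescoping in $n$, the uniform $L^\infty$ bound (\ref{estimlinf}), and the $H^1$-regularity of $\overline{u}$ in time to control the perturbation coming from $\overline{u}_K^{n+1}-\overline{u}_K^n$. This gives a uniform bound on this contribution. For the flux, I would use the reformulation (\ref{flux2}) and set $\alpha_{K,\sigma}^n := \tfrac{q_{K,\sigma}}{2}\coth\!\bigl(\tfrac{d_\sigma q_{K,\sigma}}{2\,dr_{K,\sigma}^n}\bigr)$, which is symmetric, i.e.\ $\alpha_{K,\sigma}^n=\alpha_{L,\sigma}^n$, because $\coth$ is odd. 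The crucial coercivity input is the elementary inequality $x\coth(x)\geq 1$, which gives
\begin{equation*}
\mathrm{m}(\sigma)\alpha_{K,\sigma}^n \;\geq\; \tau_\sigma\, dr_{K,\sigma}^n .
\end{equation*}
Combined with Lemma \ref{propdr}, (H4), and the lower bound $U_K^n\geq m>0$ from (\ref{estimlinf}), this yields $dr_{K,\sigma}^n\geq c_0 m^{\gamma-1}=:\kappa>0$, so that the diffusive part of the flux produces, after summation by parts, the coercive quantity $\kappa\sum_{n}\Delta t\sum_\sigma \tau_\sigma (V_L^{n+1}-V_K^{n+1})^2$, which is essentially $\kappa\|u_\delta-\overline{u}_\delta\|_{1,\mathcal D}^2$.

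For the centred convection part of (\ref{flux2}), I would rewrite the edge contribution $\mathrm{m}(\sigma)q_{K,\sigma}\tfrac{U_K^{n+1}+U_\sigma^{n+1}}{2}(U_K^{n+1}-U_\sigma^{n+1})=\tfrac12\mathrm{m}(\sigma)q_{K,\sigma}\bigl((U_K^{n+1})^2-(U_\sigma^{n+1})^2\bigr)$, reassemble by cells, and use $\sum_{\sigma\in\mathcal E_K}\mathrm{m}(\sigma)q_{K,\sigma}=\int_K\mathrm{div}\,\mathbf{q}=0$ from (H6) to cancel the $U_K^2$ contributions up to boundary terms that are bounded by data thanks to (\ref{estimlinf}) and (H7). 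The residual cross-terms originating from the $\overline{u}$ part of the test function are of the form $\tau_\sigma \alpha_{K,\sigma}^n(V_K-V_L)(\overline{u}_K-\overline{u}_L)$ or $\mathrm{m}(\sigma)q_{K,\sigma}(\text{something bounded})(\overline{u}_K-\overline{u}_L)$; I would absorb them via Young's inequality, keeping half of the diffusive coercivity on the left and bounding the right-hand side by $C\|\overline{u}\|_{H^1(\Omega\times(0,T))}^2$ using standard discrete $H^1$-consistency estimates for $\overline{u}_\delta$. Finally, the triangle inequality $\|u_\delta\|_{1,\mathcal D}\leq \|u_\delta-\overline{u}_\delta\|_{1,\mathcal D}+\|\overline{u}_\delta\|_{1,\mathcal D}$ gives the claimed bound. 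The main technical obstacle, as usual, is the bookkeeping near the Dirichlet boundary — relating the discrete jumps involving $\overline{u}_K^{n+1}$ to a genuine $H^1$-norm of the continuous extension with a constant independent of the mesh.
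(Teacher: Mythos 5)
Your proposal is correct and follows essentially the same route as the paper: multiply the scheme by $\Delta t\,(U_K^{n+1}-\overline{U}_K^{n+1})$, integrate by parts discretely, extract coercivity from the reformulation (\ref{flux2}) via $x\coth(x)\geq 1$ together with Lemma \ref{propdr} and the uniform bounds $m\leq U_K^n\leq M$, handle the centred convection term through $\tfrac12\bigl((U_K^{n+1})^2-(U_L^{n+1})^2\bigr)$ and (H6), and absorb the $\overline{u}$ cross-terms by Young's inequality using the boundedness of $x\mapsto x\coth(x)$ on the relevant range. The only cosmetic difference is that you spell out the elementary inequalities for the time term where the paper cites the corresponding estimate from the literature.
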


\begin{proof}
We follow the proof of Lemma 4.2 in \cite{Eymard2006}. Throughout this proof, $D_{i}$ denotes constants which depend only on $r$, $\textbf{q}$, $u_{0}$, $\overline{u}$, $\Omega$ and $T$. We set
\begin{equation*}
\overline{U}^{n+1}_{K}=\ds{\frac{1}{\Delta t\text{m}(K)}\int_{t^{n}}^{t^{n+1}}\int_{K}\overline{u}(x,t)\, dx \, dt},\, \forall K \in \mathcal{T}, \,\ \forall n \in \mathbb{N},
\end{equation*} 
and
\begin{equation*}
w^{n+1}_{K}=U^{n+1}_{K}-\overline{U}^{n+1}_{K},\, \forall K \in \mathcal{T}, \,\ \forall n \in \mathbb{N}.
\end{equation*}
We multiply the scheme (\ref{schemegene}) by $\Delta t w^{n+1}_{K}$ and we sum over $n$ and $K$. We obtain $A+B=0$, where: 
\begin{eqnarray*}
A &=& \sum_{n=0}^{N_{T}}\sum_{K \in \mathcal{T}}\text{m}(K)\left(U^{n+1}_{K}-U^{n}_{K}\right)w_{K}^{n+1},\\
B &=& \sum_{n=0}^{N_{T}} \Delta t \sum_{K \in \mathcal{T}} \sum_{\sigma \in \mathcal{E}_{K}} \mathcal{F}^{n+1}_{K,\sigma}w^{n+1}_{K}.
\end{eqnarray*}

\textbf{Estimate of $A$.} This term is treated in \cite{Eymard2006}. We get:
\begin{equation}
A \geq -\frac{1}{2}\Vert u_{0}-\overline{u}(.,0)\Vert_{L^{2}(\Omega)}^{2}-2\Vert \pa_{t}\overline{u}\Vert_{L^{1}(\Omega \times (0,T))}|M-m| =-D_{2}. \label{estimA}
\end{equation}

\textbf{Estimate of $B$.} A discrete integration by parts yields (using that $w_{\sigma}^{n+1}=0$ for all $\sigma \in \mathcal{E}_{ext}^{D}$ and for all $n \geq 0$):

\begin{equation*}
B = \sum_{n=0}^{N_{T}} \Delta t \sum_{\substack{\sigma \in \mathcal{E}_{int} \\ \sigma=K|L}} \mathcal{F}^{n+1}_{K,\sigma}\left(w^{n+1}_{K}-w_{L}^{n+1}\right) + \sum_{n=0}^{N_{T}} \Delta t \sum_{K \in \mathcal{T}} \sum_{\sigma \in \mathcal{E}_{ext,K}^{D}} \mathcal{F}^{n+1}_{K,\sigma} \left(w^{n+1}_{K}-w_{\sigma}^{n+1}\right),
\end{equation*}
which delivers $B=B'-\overline{B}$, with:
\begin{eqnarray*}
B' &=& \sum_{n=0}^{N_{T}} \Delta t \sum_{\substack{\sigma \in \mathcal{E}_{int} \\ \sigma=K|L}} \mathcal{F}^{n+1}_{K,\sigma}\left(U^{n+1}_{K}-U_{L}^{n+1}\right) + \sum_{n=0}^{N_{T}} \Delta t \sum_{K \in \mathcal{T}} \sum_{\sigma \in \mathcal{E}_{ext,K}^{D}} \mathcal{F}^{n+1}_{K,\sigma}\left(U^{n+1}_{K}-U_{\sigma}^{n+1}\right),\\
\overline{B} &=& \sum_{n=0}^{N_{T}} \Delta t \sum_{\substack{\sigma \in \mathcal{E}_{int} \\ \sigma=K|L}} \mathcal{F}^{n+1}_{K,\sigma}\left(\overline{U}^{n+1}_{K}-\overline{U}_{L}^{n+1}\right) + \sum_{n=0}^{N_{T}} \Delta t \sum_{K \in \mathcal{T}} \sum_{\sigma \in \mathcal{E}_{ext,K}^{D}} \mathcal{F}^{n+1}_{K,\sigma}\left(\overline{U}^{n+1}_{K}-\overline{U}_{\sigma}^{n+1}\right).
\end{eqnarray*}

\textbf{Estimate of $\overline{B}$.} Using the expression (\ref{flux2}) of $\mathcal{F}_{K,\sigma}^{n+1}$, we have $\overline{B}=\overline{B}_{1}+\overline{B}_{2}$ with
\begin{eqnarray*}
\overline{B}_{1}&=& \sum_{n=0}^{N_{T}} \Delta t \sum_{\substack{\sigma \in \mathcal{E}_{int} \\ \sigma=K|L}} \frac{\text{m}(\sigma)q_{K,\sigma}}{2}\left(U_{K}^{n+1}+U_{L}^{n+1}\right)\left(\overline{U}_{K}^{n+1}-\overline{U}_{L}^{n+1}\right) \\
& & + \sum_{n=0}^{N_{T}} \Delta t \sum_{K \in \mathcal{T}} \sum_{\sigma \in \mathcal{E}_{ext,K}^{D}}\frac{\text{m}(\sigma)q_{K,\sigma}}{2}\left(U_{K}^{n+1}+U_{\sigma}^{n+1}\right)\left(\overline{U}_{K}^{n+1}-\overline{U}_{\sigma}^{n+1}\right),\\
\overline{B}_{2}&=& \sum_{n=0}^{N_{T}} \Delta t \sum_{\substack{\sigma \in \mathcal{E}_{int} \\ \sigma=K|L}} \frac{\text{m}(\sigma)q_{K,\sigma}}{2}\coth\left(\frac{d_{\sigma}q_{K,\sigma}}{2dr^{n}_{K,\sigma}}\right)\left(U_{K}^{n+1}-U_{L}^{n+1}\right)\left(\overline{U}_{K}^{n+1}-\overline{U}_{L}^{n+1}\right)\\
& & + \sum_{n=0}^{N_{T}} \Delta t \sum_{K \in \mathcal{T}} \sum_{\sigma \in \mathcal{E}_{ext,K}^{D}}\frac{\text{m}(\sigma)q_{K,\sigma}}{2}\coth\left(\frac{d_{\sigma}q_{K,\sigma}}{2dr^{n}_{K,\sigma}}\right)\left(U_{K}^{n+1}-U_{\sigma}^{n+1}\right)\left(\overline{U}_{K}^{n+1}-\overline{U}_{\sigma}^{n+1}\right).
\end{eqnarray*}
The term $\overline{B}_{1}$ is treated like in \cite{Eymard2006}, which leads to
\begin{equation*}
\vert \overline{B}_{1} \vert \leq M \Vert \textbf{q} \Vert_{\infty}\Vert \overline{u}_{\delta} \Vert_{1,\mathcal{D}}\text{dm}(\Omega)=D_{3}.
\end{equation*}
We apply Young's inequality for $\overline{B}_{2}$: for any $\alpha>0$, we have
\begin{eqnarray*}
\left\vert \overline{B}_{2} \right\vert &\leq &\frac{\alpha}{2} \sum_{n=0}^{N_{T}} \Delta t \sum_{\substack{\sigma \in \mathcal{E}_{int} \\ \sigma=K|L}} \tau_{\sigma}\left(dr_{K,\sigma}^{n}\right)^{2}\left(\frac{d_{\sigma}q_{K,\sigma}}{2dr_{K,\sigma}^{n}}\coth\left(\frac{d_{\sigma}q_{K,\sigma}}{2dr^{n}_{K,\sigma}}\right)\right)^{2}\left(U_{K}^{n+1}-U_{L}^{n+1}\right)^{2}\\
& & + \frac{\alpha}{2} \sum_{n=0}^{N_{T}} \Delta t \sum_{K \in \mathcal{T}} \sum_{\sigma \in \mathcal{E}_{ext,K}^{D}} \tau_{\sigma}\left(dr_{K,\sigma}^{n}\right)^{2} \left(\frac{d_{\sigma}q_{K,\sigma}}{2dr^{n}_{K,\sigma}}\coth\left(\frac{d_{\sigma}q_{K,\sigma}}{2dr^{n}_{K,\sigma}}\right)\right)^{2}\left(U_{K}^{n+1}-U_{\sigma}^{n+1}\right)^{2}\\
& &+ \frac{1}{2\alpha}\Vert \overline{u}_{\delta}\Vert_{1,\mathcal{D}}^{2}. 
\end{eqnarray*}
By the hypothesis (H4), we have $\ds{\inf_{s \in [m,M]}r'(s)>0}$. Then, using Lemma \ref{propdr}, the $L^{\infty}$ estimate on $u_{\delta}$ (\ref{estimlinf}) and the hypothesis (H5), we have
\begin{equation*}
\frac{d_{\sigma}q_{K,\sigma}}{2dr^{n}_{K,\sigma}} \leq \frac{\Vert \textbf{q} \Vert_{\infty}\text{diam}(\Omega)}{\ds{\inf_{s\in [m,M]}r'(s)}}, \, \forall n \in \mathbb{N},\, \forall K \in \mathcal{T}, \, \forall \sigma \in \mathcal{E}_{K}. 
\end{equation*}
Moreover, since $x \mapsto x \coth(x)$ is continuous on $\mathbb{R}$, we obtain
\begin{equation*}
\left(\frac{d_{\sigma}q_{K,\sigma}}{2dr^{n}_{K,\sigma}}\coth\left(\frac{d_{\sigma}q_{K,\sigma}}{2dr^{n}_{K,\sigma}}\right)\right)^{2} \leq D_{4}, \, \forall n \in \mathbb{N},\, \forall K \in \mathcal{T}, \, \forall \sigma \in \mathcal{E}_{K}. 
\end{equation*}
Thus we can bound $ \overline{B}$:
\begin{equation}
\left\vert \overline{B} \right\vert \leq D_{3}+\frac{\alpha}{2}D_{4}\left(\sup_{s \in [m,M]}r'(s)\right)^{2}\Vert u_{\delta} \Vert_{1,\mathcal{D}}^{2}+\frac{1}{2\alpha}\Vert \overline{u}_{\delta} \Vert_{1,\mathcal{D}}.
\label{estimBb} 
\end{equation} 

\textbf{Estimate of $B'$.} First, using the expression (\ref{flux2}) of the flux and Lemma \ref{propdr}, we have for all $n \geq 0$, for all $K \in \mathcal{T}$ and for all $\sigma=K|L \in \mathcal{E}_{int,K}$
\begin{eqnarray*}
\mathcal{F}^{n+1}_{K,\sigma}\left(U^{n+1}_{K}-U^{n+1}_{L}\right) &=& \frac{\text{m}(\sigma)q_{K,\sigma}}{2}\left(\left(U_{K}^{n+1}\right)^{2}-\left(U^{n+1}_{L}\right)^{2}\right)\\
& & + \tau_{\sigma} r'(\eta_{K,\sigma}^{n})\frac{d_{\sigma}q_{K,\sigma}}{2r'(\eta_{K,\sigma}^{n})} \coth\left(\frac{d_{\sigma}q_{K,\sigma}}{2r'(\eta_{K,\sigma}^{n})}\right)\left(U^{n+1}_{K}-U^{n+1}_{L}\right)^{2}.
\end{eqnarray*}  
Then, since $x\coth(x) \geq 1$ for all $x\in \mathbb{R}$, we get:
\begin{equation*}
\mathcal{F}^{n+1}_{K,\sigma}\left(U^{n+1}_{K}-U^{n+1}_{L}\right) \geq  \frac{\text{m}(\sigma)q_{K,\sigma}}{2}\left(\left(U_{K}^{n+1}\right)^{2}-\left(U^{n+1}_{L}\right)^{2}\right) + \tau_{\sigma}\inf_{s \in [m,M]}r'(s) \left(U^{n+1}_{K}-U^{n+1}_{L}\right)^{2}.
\end{equation*}
We obtain the same type of inequality for $\mathcal{F}^{n+1}_{K,\sigma}\left(U^{n+1}_{K}-U^{n+1}_{\sigma}\right)$. Thus we get 
\begin{eqnarray*}
B' &\geq & \inf_{s \in [m,M]}r'(s)\Vert u_{\delta}\Vert_{1,\mathcal{D}}^{2}+\sum_{n=0}^{N_{T}}\Delta t\sum_{\substack{\sigma \in \mathcal{E}_{int} \\ \sigma=K|L}} \frac{\text{m}(\sigma)q_{K,\sigma}}{2}\left(\left(U_{K}^{n+1}\right)^{2}-\left(U^{n+1}_{L}\right)^{2}\right)\\
& &+\sum_{n=0}^{N_{T}}\Delta t\sum_{K\in \mathcal{T}}\sum_{\sigma \in \mathcal{E}_{ext,K}^{D}}\frac{\text{m}(\sigma)q_{K,\sigma}}{2}\left(\left(U_{K}^{n+1}\right)^{2}-\left(U^{n+1}_{\sigma}\right)^{2}\right).
\end{eqnarray*}
Through integrating by parts and using the hypothesis (H6), we get
\begin{eqnarray*}
& &\sum_{n=0}^{N_{T}}\Delta t\sum_{\substack{\sigma \in \mathcal{E}_{int} \\ \sigma=K|L}} \frac{\text{m}(\sigma)q_{K,\sigma}}{2}\left(\left(U_{K}^{n+1}\right)^{2}-\left(U^{n+1}_{L}\right)^{2}\right)\\
&+&\sum_{n=0}^{N_{T}}\Delta t\sum_{K\in \mathcal{T}}\sum_{\sigma \in \mathcal{E}_{ext,K}^{D}}\frac{\text{m}(\sigma)q_{K,\sigma}}{2}\left(\left(U_{K}^{n+1}\right)^{2}-\left(U^{n+1}_{\sigma}\right)^{2}\right) \\
& = & -\sum_{n=0}^{N_{T}}\Delta t \sum_{K\in \mathcal{T}}\sum_{\sigma \in \mathcal{E}_{ext,K}^{D}} \frac{1}{2}\int_{\sigma}\textbf{q}(x)\cdot \textbf{n}_{K,\sigma}\, ds(x) \left(U_{\sigma}^{n+1}\right)^{2}=-D_{5},
\end{eqnarray*}
and then 
\begin{equation}
B' \geq \inf_{s \in [m,M]}r'(s)\Vert u_{\delta}\Vert_{1,\mathcal{D}}^{2}-D_{5}.
\label{estimBp}
\end{equation}

\textbf{Conclusion.} Using $A+B=0$ and estimates (\ref{estimA}), (\ref{estimBb}) and (\ref{estimBp}), we finally get for any $\alpha>0$:
\begin{equation*}
\left(\inf_{s\in [m,M]}r'(s)-\frac{\alpha}{2}D_{4}\left(\sup_{s\in [m,M]}r'(s)\right)^{2}\right)\Vert u_{\delta}\Vert_{1,\mathcal{D}}^{2} \leq D_{2}+D_{3}+D_{5}+\frac{1}{2\alpha}\Vert \overline{u}_{\delta}\Vert_{1,\mathcal{D}}^{2},
\end{equation*}
thus for $\alpha < \ds{\frac{2\ds{\inf_{s\in [m,M]}r'(s)}}{D_{4}\left(\ds{\sup_{s\in [m,M]}r'(s)}\right)^{2}}}$, we obtain $
\Vert u_{\delta} \Vert_{1,\mathcal{D}}^{2} \leq D_{1}$.
\end{proof}

%%%%%%%%%%%%%%%%%%%%%%%%%%%%%%%%%%%%%%%%%%%%%%%%%%%%%%%%%%%%%%%%

\section{Convergence}

In this section, we prove the convergence of the approximate solution $u_{\delta}$ to a weak solution $u$ of the problem (\ref{eq})-(\ref{CI})-(\ref{BCD})-(\ref{BCN}). Our first goal is to prove the strong compactness of $(u_{\delta})_{\delta>0}$ in $L^{2}\left(\Omega \times ]0,T[\right)$. It comes from the criterion of strong compactness of a sequence by using estimates (\ref{estimlinf}) and (\ref{estiml2}). Then, we will prove the weak compactness in $L^{2}(\Omega \times ]0,T[)$ of an approximate gradient. Finally, we will show the convergence of the scheme.

%%%%%%%%%%%%%%%%%%%%%%%%%%%%%%%%%%%%%%%%%

\subsection{Compactness of the approximate solution}

The following Lemma is a classical consequence of Proposition \ref{propestiml2} and estimates of time translation for $u_{\delta}$ obtained from the scheme (\ref{CId})-(\ref{BCd})-(\ref{schemegene})-(\ref{flux}). The proof is similar to those of Lemma 4.3 and Lemma 4.7 in \cite{Eymard2000}. 

\begin{lemma}[Space and time translate estimates]
We suppose (H1)-(H7). Let $\mathcal{D}$ be an admissible discretization of $ \Omega \times (0,T)$. Let $u_{\delta}$ be defined by the scheme (\ref{CId})-(\ref{BCd})-(\ref{schemegene})-(\ref{flux}) and by (\ref{defiuapp}).\\
Let $ \hat{u}$ be defined by $\hat{u}_{\delta}=u_{\delta} \text{ a.e. on } \Omega \times (0,T)$ and $\hat{u}_{\delta}=0 \text{ a.e. on } \mathbb{R}^{d+1} \setminus \Omega \times (0,T)$.\\
Then we get the existence of $M_{2}>0$, only depending on $ \Omega$, $T$, $r$, $q$, $u_{0}$, $\overline{u}$ and not on $ \mathcal{D}$ such that 
\begin{equation}
\int_{0}^{T}\int_{\Omega}\left(\hat{u}_{\delta}(x+\eta,t)-\hat{u}_{\delta}(x,t)\right)^{2}\, dx\, dt \leq M_{2}|\eta|(|\eta|+4\delta), \quad \forall \eta \in \mathbb{R}^{d},
\label{transspace}
\end{equation}
and
\begin{equation}
\int_{0}^{T}\int_{\Omega}\left(\hat{u}_{\delta}(x,t+\tau)-\hat{u}_{\delta}(x,t)\right)^{2}\, dx \, dt \leq M_{2}|\tau|, \quad \forall \tau \in \mathbb{R}.
\label{transtime}
\end{equation}
\end{lemma}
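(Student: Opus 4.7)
The plan is to follow the classical strategy of Eymard-Gallouët-Herbin (Lemmas 4.3 and 4.7 of \cite{Eymard2000}), exploiting the $L^\infty$ estimate (\ref{estimlinf}) and the discrete $H^1$ estimate (\ref{estiml2}) proved in Proposition \ref{propestiml2}, together with the scheme itself for the time translates.

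For the space translate estimate, I would first reduce the problem to controlling, for each fixed $t \in (0,T)$, the quantity $\int_{\mathbb{R}^d}(\hat{u}_\delta(x+\eta,t)-\hat{u}_\delta(x,t))^2\, dx$. The standard device is to write, for $x$ such that the segment $[x, x+\eta]$ stays in $\Omega$, the difference $\hat{u}_\delta(x+\eta,t)-\hat{u}_\delta(x,t)$ as a telescoping sum $\sum_{\sigma} \chi_\sigma(x,\eta)(U^{n+1}_L-U^{n+1}_K)$ over the edges crossed by the segment, where $\chi_\sigma(x,\eta)=1$ if $[x,x+\eta]$ crosses $\sigma$ and $0$ otherwise. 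Applying Cauchy-Schwarz with weights $d_\sigma/|\eta \cdot \mathbf{n}_{K,\sigma}|$ and integrating over $x$, one uses the elementary bound $\int_{\mathbb{R}^d}\chi_\sigma(x,\eta)\, dx \leq \mathrm{m}(\sigma)\,|\eta \cdot \mathbf{n}_{K,\sigma}|$ to obtain a bound in terms of $|\eta|\sum_\sigma \tau_\sigma(U^{n+1}_L-U^{n+1}_K)^2$. Summing in time, the discrete $H^1$ estimate (\ref{estiml2}) gives the $|\eta|^2$ part of the bound. The extra $|\eta|\cdot 4\delta$ term comes from the boundary layer of points $x$ within distance $|\eta|+\delta$ of $\partial\Omega$, where the segment crosses $\partial\Omega$ or lies in cells meeting the boundary; there the crude $L^\infty$ bound (\ref{estimlinf}) combined with $\mathrm{m}(\{x : \mathrm{d}(x,\partial\Omega)\leq |\eta|+\delta\}) \lesssim |\eta|+\delta$ yields a contribution of order $|\eta|+\delta$, which multiplied by $|\eta|$ produces the $|\eta|\cdot 4\delta$ term.

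For the time translate estimate, let $0<\tau < T$. Fix $t \in (0,T-\tau)$ and write $A(t)=\int_\Omega (\hat{u}_\delta(x,t+\tau)-\hat{u}_\delta(x,t))^2\, dx = \sum_{K \in \mathcal{T}} \mathrm{m}(K)(U^{n_2(t)+1}_K-U^{n_1(t)+1}_K)^2$, where $n_1(t), n_2(t)$ are the time indices of $t$ and $t+\tau$. Using the scheme (\ref{schemegene}) to express $U^{n_2+1}_K-U^{n_1+1}_K$ as a telescoping sum of flux balances $-\sum_{n=n_1}^{n_2-1}\frac{\Delta t}{\mathrm{m}(K)}\sum_{\sigma \in \mathcal{E}_K}\mathcal{F}^{n+1}_{K,\sigma}$, rearranging and applying a discrete integration by parts in space, one obtains
\begin{equation*}
A(t) = \sum_{n=n_1}^{n_2-1}\Delta t \sum_{\sigma \in \mathcal{E}} \mathcal{F}^{n+1}_{K,\sigma}\, D(U^{n_2+1}-U^{n_1+1})_{K,\sigma}.
\end{equation*}
Integrating in $t$ from $0$ to $T-\tau$ swaps the sums and produces a factor at most $\tau$ for the number of $t$'s that see a given time step $n$. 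Cauchy-Schwarz then gives $\int_0^{T-\tau}A(t)\, dt \leq \tau \left(\sum_n \Delta t \sum_\sigma \tau_\sigma^{-1}(\mathcal{F}^{n+1}_{K,\sigma})^2\right)^{1/2}\left(\sum_n \Delta t\sum_\sigma \tau_\sigma D(U^{n+1})_{K,\sigma}^2\right)^{1/2}$. Both factors are controlled: the second by the discrete $H^1$ estimate (\ref{estiml2}), and the first by the expression (\ref{flux2}) of the flux combined with $L^\infty$ bounds on $u_\delta$, $dr^n_{K,\sigma}$, $q_{K,\sigma}$ and the uniform bound on $x\coth(x)$ already used in Proposition \ref{propestiml2}.

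The main obstacle, and the place where the argument is genuinely more delicate than for the pure upwind scheme of \cite{Eymard2000}, is the control of $\sum \Delta t \sum_\sigma \tau_\sigma^{-1}(\mathcal{F}^{n+1}_{K,\sigma})^2$: the flux (\ref{flux2}) contains the $\coth$ factor $\frac{d_\sigma q_{K,\sigma}}{2dr^n_{K,\sigma}}\coth\!\bigl(\frac{d_\sigma q_{K,\sigma}}{2dr^n_{K,\sigma}}\bigr)$, which is uniformly bounded thanks to Lemma \ref{propdr} and the lower bound $\inf_{[m,M]}r'>0$, so that $(\mathcal{F}^{n+1}_{K,\sigma})^2 \lesssim \tau_\sigma^2 (U^{n+1}_L-U^{n+1}_K)^2 + \mathrm{m}(\sigma)^2 q_{K,\sigma}^2(U^{n+1}_K+U^{n+1}_L)^2$; dividing by $\tau_\sigma$ and summing gives a bound of the same structure as $\Vert u_\delta\Vert_{1,\mathcal{D}}^2$ plus a harmless $L^\infty$ term. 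With both factors bounded by constants depending only on $\Omega,T,r,\mathbf{q},u_0,\overline{u}$, one concludes $\int_0^{T-\tau}A(t)\, dt \leq M_2\tau$, extending trivially to $\tau \in \mathbb{R}$ by symmetry and the zero extension outside $(0,T)$.
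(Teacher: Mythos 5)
Your proposal is correct and follows essentially the same route as the paper, which simply invokes the classical space/time translate arguments of Lemmas 4.3 and 4.7 in \cite{Eymard2000} as consequences of the discrete $L^{2}(0,T;H^{1})$ estimate (\ref{estiml2}), the $L^{\infty}$ bound (\ref{estimlinf}) and the scheme itself. In particular you correctly isolate the only genuinely new point relative to the pure upwind case, namely the weighted $\ell^{2}$ bound on the flux (\ref{flux2}), which works because the argument of $\coth$ is uniformly bounded in the nondegenerate setting ($\inf_{[m,M]}r'>0$ under (H7)).
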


Now, we define an approximation $\nabla^{\delta}u_{\delta}$ of the gradient of $u$. Therefore, we will define a dual mesh. For $K \in \mathcal{T}$ and $\sigma \in \mathcal{E}_{K}$, we define $T_{K,\sigma}$ as follows: 
\begin{itemize}
\item if $\sigma=K|L \in \mathcal{E}_{int,K}$, then $T_{K,\sigma}$ is the cell whose vertices are $x_{K}$, $x_{L}$ and those of $\sigma=K|L$,
\item if $\sigma \in \mathcal{E}_{ext,K}$, then $T_{K,\sigma}$ is the cell whose vertices are $x_{K}$ and those of $\sigma$.
\end{itemize}
See \cite{Chainais-Hillairet2004} for an example of construction of $T_{K,\sigma}$. Then $\left(\left(T_{K,\sigma}\right)_{\sigma \in \mathcal{E}_{K}}\right)_{K \in \mathcal{T}}$ defines a partition of $\Omega$.
The approximation $\nabla^{\delta}u_{\delta}$ is a piecewise function defined in $\Omega \times (0,T)$ by:
\begin{equation*}
\nabla^{\delta}u_{\delta}(x,t)=\left\{\begin{array}{lcl} \ds{\frac{\text{m}(\sigma)}{\text{m}(T_{K,\sigma})}\left(U^{n+1}_{L}-U^{n+1}_{K}\right) \mathbf{n}_{K,\sigma}} &  & \text{ if }  (x,t) \in T_{K,\sigma}\times [t^{n},t^{n+1}[, \,\ \sigma=K|L,\\ \ds{\frac{\text{m}(\sigma)}{\text{m}(T_{K,\sigma})}\left(U^{n+1}_{\sigma}-U^{n+1}_{K}\right) \mathbf{n}_{K,\sigma}} & & \text{ if } (x,t) \in T_{K,\sigma}\times [t^{n},t^{n+1}[, \,\ \sigma \in \mathcal{E}_{ext,K}. \end{array}\right.
\end{equation*}

\begin{proposition}
We suppose (H1)-(H7). \\
There exist subsequences of $(u_{\delta})_{\delta>0}$ and $(\nabla^{\delta}u_{\delta})_{\delta>0}$, still denoted $(u_{\delta})_{\delta>0}$ and $(\nabla^{\delta}u_{\delta})_{\delta>0}$, and a function $u \in L^{\infty}(0,T;H^{1}(\Omega))$ such that
\begin{equation*}
\begin{array}{cll} u_{\delta} \rightarrow  u & \text{ in } L^{2}(\Omega \times ]0,T[) \text{ strongly,} & \text{ as } \delta \rightarrow 0, \\
\nabla^{\delta}u_{\delta} \rightharpoonup \nabla u & \text{ in } (L^{2}(\Omega \times ]0,T[))^{d} \text{ weakly,} & \text{ as } \delta \rightarrow 0. \end{array}
\end{equation*}
\label{compaciteu}
\end{proposition}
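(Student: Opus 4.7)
My first step is to apply the Kolmogorov--Fréchet--Riesz compactness criterion to the extensions $\hat u_\delta$. The uniform $L^\infty$ bound (\ref{estimlinf}) yields $L^2$ boundedness on $\Omega\times(0,T)$, while the space translate estimate (\ref{transspace}) and the time translate estimate (\ref{transtime}) control the $L^2$ moduli of continuity uniformly in $\delta$. The criterion produces a subsequence converging strongly in $L^2(\Omega\times(0,T))$ to some limit $u$, which inherits both the $L^\infty$ bound and nonnegativity.

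\textbf{Weak compactness of $\nabla^\delta u_\delta$.} Next I derive a uniform $L^2$ bound on the reconstructed gradient. Using its piecewise-constant expression on the diamond cells $T_{K,\sigma}$,
\begin{equation*}
\|\nabla^\delta u_\delta\|_{L^2(\Omega\times(0,T))^d}^2 = \sum_{n=0}^{N_T}\Delta t \sum_{K\in\mathcal{T}}\sum_{\sigma\in\mathcal{E}_K} \frac{\text{m}(\sigma)^2}{\text{m}(T_{K,\sigma})}\left(U_{K,\sigma}^{n+1}-U_K^{n+1}\right)^2,
\end{equation*}
where $U_{K,\sigma}^{n+1}$ stands for $U_L^{n+1}$ or $U_\sigma^{n+1}$ according as $\sigma$ is interior or a Dirichlet edge. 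Since $\text{m}(T_{K,\sigma})=\frac{1}{d}\text{m}(\sigma)\,d(x_K,\sigma)$ and $d(x_K,\sigma)\le d_\sigma$, this quantity is bounded by $d\,\|u_\delta\|_{1,\mathcal{D}}^2$, hence by $d\,D_1$ thanks to Proposition~\ref{propestiml2}. A subsequence therefore converges weakly in $(L^2(\Omega\times(0,T)))^d$ to some limit $G$.

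\textbf{Identification $G=\nabla u$.} This is the main obstacle. For a smooth vector field $\varphi\in\mathcal{D}(\Omega\times(0,T);\mathbb{R}^d)$, I compute $\int_0^T\int_\Omega\nabla^\delta u_\delta\cdot\varphi\,dx\,dt$ by summing over the diamond partition. Regrouping the two contributions at each interior edge $\sigma=K|L$ through the relation $\mathbf{n}_{L,\sigma}=-\mathbf{n}_{K,\sigma}$ produces a sum of the form $\sum_\sigma\text{m}(\sigma)(U_L^{n+1}-U_K^{n+1})\,\mathbf{n}_{K,\sigma}\cdot\overline{\varphi}_\sigma$, where $\overline{\varphi}_\sigma$ is an average of $\varphi$ over the double diamond attached to $\sigma$. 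Comparing with the discrete integration-by-parts formula $-\int u_\delta\,\mathrm{div}(\varphi)\,dx\,dt = \sum_\sigma (U_L^{n+1}-U_K^{n+1})\int_\sigma\varphi\cdot\mathbf{n}_{K,\sigma}\,ds$ and exploiting the orthogonality $(x_Kx_L)\perp\sigma$ of the admissible mesh together with a Taylor expansion of $\varphi$, the difference is bounded by a consistency term of order $\delta$, controlled via Cauchy--Schwarz by $C\,\delta\,\|u_\delta\|_{1,\mathcal{D}}\,\|\varphi\|_{W^{1,\infty}}$, which tends to $0$ with $\delta$. Combining with the already established strong $L^2$ convergence $u_\delta\to u$ yields
\begin{equation*}
\int_0^T\int_\Omega G\cdot\varphi\,dx\,dt = -\int_0^T\int_\Omega u\,\mathrm{div}(\varphi)\,dx\,dt,
\end{equation*}
so $G=\nabla u$ in the distributional sense and $\nabla u\in L^2(\Omega\times(0,T))^d$. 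Combined with the $L^\infty$ bound on $u$, this yields the announced regularity.
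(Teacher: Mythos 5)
Your proof is correct and follows the same route as the paper: the strong $L^{2}$ compactness is obtained exactly as in the text via the translate estimates (\ref{transspace})--(\ref{transtime}) and the Riesz--Fréchet--Kolmogorov criterion, while for the gradient the paper simply cites \cite{Chainais-Hillairet2003}, and the details you supply (the bound $\Vert \nabla^{\delta}u_{\delta}\Vert_{L^{2}}^{2}\leq C(d)\,\Vert u_{\delta}\Vert_{1,\mathcal{D}}^{2}\leq C(d)\,D_{1}$ using $\text{m}(T_{K,\sigma})\sim \frac{1}{d}\text{m}(\sigma)d_{\sigma}$, followed by the edge-wise comparison with $-\int u_{\delta}\,\mathrm{div}\,\varphi$ and an $O(\delta)$ consistency error controlled by Cauchy--Schwarz) are precisely the standard argument used there. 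One small caveat: as written, your identification step only gives $\nabla u\in \left(L^{2}(\Omega\times(0,T))\right)^{d}$, i.e.\ $u\in L^{2}(0,T;H^{1}(\Omega))\cap L^{\infty}$, which is weaker than the $L^{\infty}(0,T;H^{1}(\Omega))$ regularity announced in the statement (and is in fact all that is needed, and all the cited estimate (\ref{estiml2}) can deliver).
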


\begin{proof}
Using estimates (\ref{transspace})-(\ref{transtime}) and applying the Riesz-Fréchet-Kolmogorov criterion of strong compactness \cite{Brezis1983}, we obtain the first part of this Proposition. The result concerning $\nabla^{\delta}u_{\delta}$ is proved in \cite{Chainais-Hillairet2003}.
\end{proof}

%%%%%%%%%%%%%%%%%%%%%%

\subsection{Convergence of the scheme} 

Now it remains to prove that the function $u$ defined in Proposition \ref{compaciteu} satisfies Definition \ref{defisol} of a weak solution. The main difficulty in proving this comes from the fact that the diffusive and convective terms are put together in the Scharfetter-Gummel flux.

\begin{theorem}
Assume (H1)-(H7) hold. Then the function $u$ defined in Proposition \ref{compaciteu} satisfies the equation (\ref{eq})-(\ref{CI})-(\ref{BCD})-(\ref{BCN}) in the sense of (\ref{defsol}) and the boundary condition $u-\overline{u} \in L^{\infty}(0,T;H^{1}_{0}(\Omega))$.
\end{theorem}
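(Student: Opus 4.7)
The plan is to derive a discrete weak formulation by multiplying the scheme by a smooth test function and then passing to the limit term by term. Let $\psi \in \mathcal{D}(\Omega \times [0,T[)$ and set $\psi_K^n = \psi(x_K,t^n)$; since $\psi$ is compactly supported in $\Omega$, for $\delta$ small enough the boundary edges play no role. Multiplying (\ref{schemegene}) by $\Delta t\,\psi_K^n$ and summing over $K \in \mathcal{T}$ and $n \ge 0$ yields $T_1 + T_2 = 0$, where $T_1$ collects the discrete time derivative and $T_2$ the fluxes. A discrete Abel summation in the $n$-index, combined with the regularity of $\psi$, the $L^{\infty}$ bound (\ref{estimlinf}) and the strong $L^2$ convergence $u_\delta \to u$ from Proposition \ref{compaciteu}, yields
$$T_1 \longrightarrow -\int_0^T\!\!\int_\Omega u\,\pa_t \psi\,dx\,dt - \int_\Omega u_0(x)\,\psi(x,0)\,dx.$$

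For $T_2$, a discrete integration by parts together with the equivalent form (\ref{flux2}) split the contribution as $T_2 = T_2^{\mathrm{conv}}+T_2^{\mathrm{diff}}$. The convective part
$$T_2^{\mathrm{conv}} = \sum_{n\ge 0}\Delta t\sum_{\substack{\sigma \in \mathcal{E}_{int}\\ \sigma=K|L}} \text{m}(\sigma)\,q_{K,\sigma}\,\frac{U^{n+1}_K+U^{n+1}_L}{2}\,(\psi_K^n - \psi_L^n)$$
is handled by a standard consistency argument for a centered scheme (as in \cite{Eymard2000}): using the regularity of $\mathbf{q}$ and $\psi$ from (H5), together with the strong $L^2$ convergence of $u_\delta$, one obtains $T_2^{\mathrm{conv}} \to -\int_0^T\!\!\int_\Omega u\,\mathbf{q}\cdot\nabla\psi\,dx\,dt$.

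The main obstacle is the diffusive part, where the explicit-in-time coefficient $dr^n_{K,\sigma}$ is tangled with the coth correction. Setting $x^n_{K,\sigma} = d_\sigma q_{K,\sigma}/(2\,dr^n_{K,\sigma})$ and $\phi(x) = x\coth(x)$, one writes
$$T_2^{\mathrm{diff}} = -\sum_{n\ge 0}\Delta t\sum_{\substack{\sigma \in \mathcal{E}_{int}\\ \sigma=K|L}} \tau_\sigma\,dr^n_{K,\sigma}\,\phi(x^n_{K,\sigma})\,(U^{n+1}_L-U^{n+1}_K)\,(\psi_K^n - \psi_L^n).$$
Under the nondegeneracy given by (H7), Lemma \ref{propdr} yields $\inf_{[m,M]} r' \le dr^n_{K,\sigma} \le \sup_{[m,M]} r'$, whence $|x^n_{K,\sigma}| = O(\Delta x)$ and $\phi(x^n_{K,\sigma}) = 1 + O(\Delta x^2)$. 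Writing $\phi = 1 + (\phi-1)$, the $(\phi-1)$-contribution is absorbed by Cauchy-Schwarz using the uniform estimate (\ref{estiml2}) and vanishes as $\delta \to 0$. For the leading term, Lemma \ref{propdr} provides $\eta^n_{K,\sigma}$ between $U^n_K$ and $U^n_L$ such that $dr^n_{K,\sigma} = r'(\eta^n_{K,\sigma})$; gluing these values together defines a piecewise constant function $\tilde u_\delta$ which, by the $L^{\infty}$ bound, the time translate estimate (\ref{transtime}) and the strong convergence of $u_\delta$, converges to $u$ strongly in $L^2(\Omega\times(0,T))$. By continuity of $r'$ on $[m,M]$ one then deduces $r'(\tilde u_\delta) \to r'(u)$ strongly, and combined with the weak convergence $\nabla^\delta u_\delta \rightharpoonup \nabla u$ in $L^2$ (Proposition \ref{compaciteu}) the product passes to the limit to deliver $T_2^{\mathrm{diff}} \to \int_0^T\!\!\int_\Omega \nabla r(u)\cdot\nabla\psi\,dx\,dt$. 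Gathering the three limits proves (\ref{defsol}); the boundary condition $u-\overline{u} \in L^\infty(0,T;H^1_0(\Omega))$ follows from the weak convergence of the approximate gradient $\nabla^\delta u_\delta$ in $L^2$ together with the definition (\ref{BCd}) of the discrete Dirichlet values on $\mathcal{E}_{ext}^D$, by the standard trace argument of \cite{Eymard2000,Chainais-Hillairet2003}.
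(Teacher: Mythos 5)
Your proposal is correct and follows essentially the same route as the paper: multiply the scheme by $\Delta t\,\psi_K^n$, use the rewritten flux (\ref{flux2}) to split into a centred convective part and a $\coth$-corrected diffusive part, kill the $x\coth x-1$ correction with the $L^2(0,T;H^1)$ estimate (\ref{estiml2}), and pass to the limit in the leading diffusive term by pairing a strongly convergent $L^2$ approximation of $r'(u)$ with the weakly convergent discrete gradient $\nabla^{\delta}u_{\delta}$. The only cosmetic difference is that the paper first replaces $dr^{n}_{K,\sigma}$ by $r'(U^{n}_{K})$, controlling the discrepancy through the bound on $r''$ and (\ref{estiml2}), instead of gluing the intermediate values $\eta^{n}_{K,\sigma}$ of Lemma \ref{propdr} into an auxiliary piecewise constant function; both devices accomplish the same thing.
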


\begin{proof}
Let $\psi \in \mathcal{D}(\Omega \times [0,T[)$ be a test function and $\psi^{n}_{K}=\psi(x_{K},t^{n})$ for all $K \in \mathcal{T}$ and $n \geq 0$. We suppose that $\delta >0$ is small enough such that Supp$(\psi) \subset \lbrace x \in \Omega; \text{ d}(x,\Gamma)>\delta \rbrace \times [0,(N_{T}-1)\Delta t[$. Let us define an approximate gradient of $\psi$ by
\begin{equation*}
\nabla^{\delta}\psi(x,t)=\left\{\begin{array}{lcl} \ds{\frac{\text{m}(\sigma)}{\text{m}(T_{K,\sigma})}\left(\psi^{n}_{L}-\psi^{n}_{K}\right) \mathbf{n}_{K,\sigma}} &  & \text{ if }  (x,t) \in T_{K,\sigma}\times [t^{n},t^{n+1}[,\,\ \sigma=K|L, \\ \ds{\frac{\text{m}(\sigma)}{\text{m}(T_{K,\sigma})}\left(\psi_{\sigma}^{n}-\psi^{n}_{K}\right) \mathbf{n}_{K,\sigma}} & & \text{ if }  (x,t) \in T_{K,\sigma}\times [t^{n},t^{n+1}[, \,\ \sigma \in \mathcal{E}_{ext,K}. \end{array}\right.
\end{equation*}
We get from \cite{Eymard2003} that $(\nabla^{\delta}\psi)_{\delta>0}$ weakly converges to $\nabla \psi$ in $(L^{2}(\Omega \times (0,T)))^{d}$ as $\delta$ goes to zero.\\
Let us introduce the following notations:
\begin{eqnarray*}
B_{10}(\delta) &=& -\left(\int_{0}^{T}\int_{\Omega}\ud(x,t)\,\pa_{t}\psi(x,t)\, dx \, dt+\int_{\Omega}\ud(x,0)\, \psi(x,0) \, dx\right),\\
B_{20}(\delta) &=& \int_{0}^{T}\int_{\Omega}r'(u_{\delta}(x,t-\Delta t))\, \nabla^{\delta}u_{\delta}(x,t)\cdot \nabla\psi(x,t)\, dx \, dt,\\
B_{30}(\delta) &=& -\int_{0}^{T}\int_{\Omega} \ud(x,t)\, \mathbf{q}(x) \cdot \nabla^{\delta}\psi(x,t)\, dx \, dt,
\end{eqnarray*}
and
\begin{equation*}
\varepsilon(\delta)=-B_{10}(\delta)-B_{20}(\delta)-B_{30}(\delta).
\end{equation*}
Multiplying the scheme (\ref{schemegene}) by $\Delta t \psi^{n}_{K}$ and summing through $K$ and $n$, we obtain
\begin{equation*}
B_{1}(\delta)+B_{2}(\delta)+B_{3}(\delta)=0,
\end{equation*}
where
\begin{eqnarray*}
B_{1}(\delta) &=& \sum_{n=0}^{N_{T}}\sum_{K \in \mathcal{T}}\text{m}(K)\left(U_{K}^{n+1}-U_{K}^{n}\right)\psi^{n}_{K},\\
B_{2}(\delta) &=& -\sum_{n=0}^{N_{T}}\Delta t \sum_{K \in \mathcal{T}} \sum_{\sigma \in \mathcal{E}_{K}}\frac{\text{m}(\sigma)q_{K,\sigma}}{2}\coth\left(\frac{d_{\sigma}q_{K,\sigma}}{2dr_{K,\sigma}^{n}}\right)\left(U^{n+1}_{\sigma}-U^{n+1}_{K}\right)\psi^{n}_{K},\\
B_{3}(\delta) &=& \sum_{n=0}^{N_{T}}\Delta t \sum_{K \in \mathcal{T}} \sum_{\sigma \in \mathcal{E}_{K}} \text{m}(\sigma)q_{K,\sigma}\frac{U_{K}^{n+1}+U_{\sigma}^{n+1}}{2}\psi^{n}_{K}.
\end{eqnarray*}
From the strong convergence of the sequence $(\ud)_{\delta>0}$ to $u$ in $L^{2}(\Omega \times ]0,T[)$, it is clear using the time translate estimate (\ref{transtime}) that there exists a subsequence of $(\ud)_{\delta>0}$, still denoted by   $(\ud)_{\delta>0}$, such that
\begin{equation*}
\ud( \,\cdot \, ,\, \cdot \,-\Delta t) \longrightarrow u \text{ in } L^{2}(\Omega \times ]0,T[) \text{ strongly as } \delta \rightarrow 0,
\end{equation*}
where $u \in L^{\infty}(0,T;H^{1}(\Omega))$ is defined in Proposition \ref{compaciteu}. Moreover, thanks to hypothesis (H4), we have $r' \in \mathcal{C}^{1}(\mathbb{R})$, and using the $L^{\infty}$-estimate (\ref{estimlinf}) we obtain that
\begin{equation*}
r'(\ud(\,\cdot \, ,\, \cdot \,-\Delta t))\longrightarrow r'(u) \text{ in } L^{2}(\Omega \times ]0,T[) \text{ strongly as } \delta \rightarrow 0.
\end{equation*}
Finally using this strong convergence and the weak convergence of the sequences $(\nabla^{\delta}u_{\delta})_{\delta>0}$ to $\nabla u$ and $(\nabla^{\delta}\psi)_{\delta>0}$ to $\nabla \psi$ in $(L^{2}(\Omega \times ]0,T[))^{d}$, it is easy to see that
\begin{eqnarray*}
\varepsilon(\delta)& \longrightarrow & \int_{0}^{T}\int_{\Omega}\left(u(x,t)\, \pa_{t}\psi-r'(u(x,t))\, \nabla u(x,t) \cdot \nabla \psi+u(x,t) \, \mathbf{q}(x) \cdot \nabla\psi\right) \, dx \, dt\\
& & +\int_{\Omega}u(x,0) \, \psi(x,0)\, dx, \text{ as } \delta \rightarrow 0.
\end{eqnarray*}
Therefore, it suffices to prove that $\varepsilon(\delta) \longrightarrow 0$ as $\delta \rightarrow 0$ and to this end we are going to prove that $\varepsilon(\delta)+B_{1}(\delta)+B_{2}(\delta)+B_{3}(\delta) \longrightarrow 0$ as $\delta \rightarrow 0$.\\

\textbf{Estimate of $B_{1}(\delta)-B_{10}(\delta)$.} This term is discussed for example in \cite{Chainais-Hillairet2003} (Theorem 5.2) and it is proved that:
\begin{equation*}
|B_{1}(\delta)-B_{10}(\delta)| \leq  \left[(T+1)\text{m}(\Omega)M\Vert \psi \Vert_{\mathcal{C}^{2}(\Omega \times (0,T))}\right]\delta \longrightarrow 0 \text{ as } \delta \rightarrow 0.
\end{equation*}

\textbf{Estimate of $B_{2}(\delta)-B_{20}(\delta)$.} Using a discrete integration by parts, we write
\begin{equation*}
B_{2}(\delta) = \sum_{n=0}^{N_{T}}\Delta t \sum_{\substack{\sigma \in \mathcal{E}_{int} \\ \sigma=K|L}}\frac{\text{m}(\sigma)q_{K,\sigma}}{2}\coth\left(\frac{d_{\sigma} q_{K,\sigma}}{2 dr^{n}_{K,\sigma}}\right)\left(U^{n+1}_{L}-U^{n+1}_{K}\right)(\psi_{L}^{n}-\psi_{K}^{n}).
\end{equation*}
Then we rewrite $B_{2}(\delta)=B_{21}(\delta)+B_{22}(\delta)+B_{23}(\delta)$, with
\begin{eqnarray*}
B_{21}(\delta) &=& \sum_{n=0}^{N_{T}}\Delta t \sum_{\substack{\sigma \in \mathcal{E}_{int} \\ \sigma=K|L}} \tau_{\sigma}r'(U^{n}_{K})\left(U^{n+1}_{L}-U^{n+1}_{K}\right)(\psi_{L}^{n}-\psi_{K}^{n}),\\
B_{22}(\delta) &=& \sum_{n=0}^{N_{T}}\Delta t \sum_{\substack{\sigma \in \mathcal{E}_{int} \\ \sigma=K|L}}\tau_{\sigma}\left(\frac{d_{\sigma}q_{K,\sigma}}{2dr^{n}_{K,\sigma}}\coth\left(\frac{d_{\sigma}q_{K,\sigma}}{2dr^{n}_{K,\sigma}}\right)-1\right)dr^{n}_{K,\sigma}\left(U^{n+1}_{L}-U^{n+1}_{K}\right)(\psi_{L}^{n}-\psi_{K}^{n}),\\
B_{23}(\delta) &=& \sum_{n=0}^{N_{T}}\Delta t \sum_{\substack{\sigma \in \mathcal{E}_{int} \\ \sigma=K|L}}\tau_{\sigma}\left(dr^{n}_{K,\sigma}-r'(U^{n}_{K})\right)\left(U^{n+1}_{L}-U^{n+1}_{K}\right)\left(\psi_{L}^{n}-\psi_{K}^{n}\right).
\end{eqnarray*}

Using the definition of $\tilde{u}_{\delta}$ and $\nabla^{\delta}u_{\delta}$, we rewrite $B_{20}(\delta)$ as $B_{210}(\delta)+B_{220}(\delta)$ with:
\begin{eqnarray*}
B_{210}(\delta) &=& \sum_{n=0}^{N_{T}}\sum_{\substack{\sigma \in \mathcal{E}_{int} \\ \sigma=K|L}}r'(U_{K}^{n})\frac{\text{m}(\sigma)}{\text{m}(T_{K,\sigma})}\left(U_{L}^{n+1}-U_{K}^{n+1}\right)\int_{t^{n}}^{t^{n+1}}\int_{T_{K,\sigma}}\nabla \psi(x,t) \cdot \mathbf{n}_{K,\sigma}\, dx \, dt,\\
B_{220}(\delta) &=& \sum_{n=0}^{N_{T}}\sum_{\substack{\sigma \in \mathcal{E}_{int} \\ \sigma=K|L}}\!\!\!\! \left(r'(U^{n}_{L})-r'(U^{n}_{K})\right)\frac{\text{m}(\sigma)}{\text{m}(T_{K,\sigma})}\left(U_{L}^{n+1}-U_{K}^{n+1}\right) \int_{t^{n}}^{t^{n+1}}\!\!\!\!\! \int_{T_{K,\sigma}\cap L}\!\!\!\!\!\!\!\!\!\!\! \nabla \psi(x,t) \cdot \mathbf{n}_{K,\sigma}\, dx \, dt.
\end{eqnarray*}
Now we prove that $B_{21}(\delta)-B_{210}(\delta) \rightarrow 0$ as $\delta \rightarrow 0$ and $B_{22}(\delta), \, B_{23}(\delta), \, B_{220}(\delta)\rightarrow 0$ as $\delta \rightarrow 0$.\\

\textbf{Estimate of $B_{21}(\delta)-B_{210}(\delta)$.} We have
\begin{equation*}
B_{21}(\delta)-B_{210}(\delta)=\sum_{n=0}^{N_{T}}\sum_{\substack{\sigma \in \mathcal{E}_{int}}} \!\!\!\! \text{m}(\sigma)r'(U^{n}_{K})\left[\int_{t^{n}}^{t^{n+1}}\!\!\!\! \left(\frac{\psi_{L}^{n}-\psi_{K}^{n}}{d_{\sigma}}-\frac{1}{\text{m}(T_{K,\sigma})}\int_{T_{K,\sigma}}\!\!\!\!\!\!\!\! \nabla\psi(x,t)\cdot \textbf{n}_{K,\sigma}\, dx\right)dt\right]\!\!.
\end{equation*}

Since the straight line $\overline{x_{K}x_{L}}$ is orthogonal to the edge $K|L$, we have $x_{L}-x_{K}=d_{\sigma}\mathbf{n}_{K,\sigma}$ and then from the regularity of $\psi$,
\begin{eqnarray*}
\frac{\psi_{L}^{n}-\psi_{K}^{n}}{d_{\sigma}}&=&\nabla \psi(x_{K},t^{n}) \cdot \mathbf{n}_{K,\sigma}+O(\Delta x)\\
&=& \nabla \psi(x,t) \cdot \mathbf{n}_{K,\sigma}+O(\delta), \,\ \forall (x,t) \in T_{K,\sigma} \times \left(t^{n},t^{n+1}\right).
\end{eqnarray*}
Then by taking the mean value over $T_{K,\sigma}$, there exists $D_{6}>0$ depending only on $\psi$ such that 
\begin{equation*}
\left\vert \int_{t^{n}}^{t^{n+1}}\left(\frac{\psi_{L}^{n}-\psi_{K}^{n}}{d_{\sigma}}-\frac{1}{\text{m}(T_{K,\sigma})}\int_{T_{K,\sigma}}\nabla \psi \cdot \mathbf{n}_{K,\sigma} \, dx \right)dt \right\vert \leq D_{6} \delta \Delta t,
\end{equation*}
and then 
\begin{equation*}
\left|B_{21}(\delta)-B_{210}(\delta)\right| \leq \delta D_{6} \sup_{s \in [m,M]}r'(s)\sum_{n=0}^{N_{T}}\Delta t\sum_{\substack{\sigma \in \mathcal{E}_{int}}}\text{m}(\sigma)\left|U_{L}^{n+1}-U_{K}^{n+1}\right|.
\end{equation*}
Since the straight line $\overline{x_{K}x_{L}}$ is orthogonal to the edge $\sigma=K|L$ for all $\sigma \in \mathcal{E}_{int,K}$ and the mesh is regular, there is a constant $D_{7}>0$ depending only on the dimension of the domain and the geometry of $\mathcal{T}$ such that  $\text{m}(\sigma)d_{\sigma} \leq D_{7} \text{m}(T_{K,\sigma})$ for all $K \in \mathcal{T}$ , all $\sigma \in \mathcal{E}_{ext,K}$ and then using the Cauchy-Schwarz inequality and the $L^{2}(0,T;H^{1})$ estimate (\ref{estiml2}), we obtain
\begin{equation*}
\left|B_{21}(\delta)-B_{210}(\delta)\right| \leq \delta D_{6} \sup_{s \in [m,M]}r'(s)\sqrt{D_{1}TD_{7}\text{m}(\Omega)} \longrightarrow 0 \text{ as } \delta \rightarrow 0.
\end{equation*}

\textbf{Estimate of $B_{22}(\delta)$.} Since $x \mapsto x\coth(x)$ is a 1-Lipschitz continuous function and is equal to 1 in 0, we have
\begin{eqnarray*}
\left|B_{22}(\delta)\right| &\leq & \sum_{n=0}^{N_{T}}\Delta t\sum_{\substack{\sigma \in \mathcal{E}_{int}}} \frac{\text{m}(\sigma)}{2}\left| q_{K,\sigma}\right| \left| U_{L}^{n+1}-U_{K}^{n+1}\right| \left| \psi_{L}^{n}-\psi_{K}^{n}\right|\\
& \leq & 2\delta \Vert \textbf{q} \Vert_{\infty}\sum_{n=0}^{N_{T}}\Delta t\sum_{\substack{\sigma \in \mathcal{E}_{int}}} \tau_{\sigma}\left| U_{L}^{n+1}-U_{K}^{n+1}\right| \left| \psi_{L}^{n}-\psi_{K}^{n}\right|, \text{ since $d_{\sigma} \leq 2 \delta$.}
\end{eqnarray*}
Then using the Cauchy-Schwarz inequality, the regularity of $\psi$ and the $L^{2}(0,T;H^{1})$ estimate (\ref{estiml2}), there exists $D_{8}>0$ only depending on $T$ and $\Omega$ such that:
\begin{equation*}
\left| B_{22}(\delta)\right| \leq \delta \Vert \textbf{q}\Vert_{\infty} D_{8} \Vert \psi \Vert_{\mathcal{C}^{1}} \sqrt{D_{1}} \longrightarrow 0 \text{ as } \delta \rightarrow 0.
\end{equation*}

\textbf{Estimate of $B_{23}(\delta)$.} Using Lemma \ref{propdr} and hypothesis (H4), we have
\begin{equation*}
\left| dr^{n}_{K,\sigma}-r'(U^{n}_{K})\right| \leq \sup_{s \in [m,M]}|r''(s)| \left|U_{L}^{n}-U^{n}_{K}\right|, \,\ \forall \sigma \in \mathcal{E}_{int}, \,\ \sigma=K|L.
\end{equation*}
Using the regularity of $\psi$ and the Cauchy-Schwarz inequality, we obtain
\begin{equation*}
\left| B_{23}(\delta) \right| \leq \delta \sup_{s \in [m,M]}|r''(s)| \Vert \psi \Vert_{\mathcal{C}^{1}} \sum_{n=0}^{N_{T}}\Delta t\sum_{\substack{\sigma \in \mathcal{E}_{int}}} \tau_{\sigma} \left|U_{L}^{n}-U^{n}_{K}\right| \left|U_{L}^{n+1}-U^{n+1}_{K}\right|,
\end{equation*}
and then using the $L^{2}(0,T;H^{1})$ estimate (\ref{estiml2}), we get
\begin{equation*}
\left| B_{23}(\delta) \right| \leq \delta \sup_{s \in [m,M]}|r''(s)| \Vert \psi \Vert_{\mathcal{C}^{1}} D_{1} \longrightarrow 0 \text{ as } \delta \rightarrow 0.
\end{equation*}

\textbf{Estimate of $B_{220}(\delta)$.} We obtain the same type of estimate as for $B_{23}(\delta)$:
\begin{equation*}
\left| B_{220}(\delta) \right| \leq 2\delta \sup_{s \in [m,M]}|r''(s)| \Vert \psi \Vert_{\mathcal{C}^{1}} D_{1} \longrightarrow 0 \text{ as } \delta \rightarrow 0.
\end{equation*}

\textbf{Estimate of $B_{3}(\delta)-B_{30}(\delta)$.} Using a discrete integration by parts, we obtain
\begin{equation*}
B_{3}(\delta) = -\sum_{n=0}^{N_{T}}\Delta t\sum_{\substack{\sigma \in \mathcal{E}_{int}}} \text{m}(\sigma)q_{K,\sigma}\frac{U_{K}^{n+1}+U_{L}^{n+1}}{2}\left(\psi_{L}^{n}-\psi_{K}^{n}\right),
\end{equation*}
and then we rewrite $B_{3}(\delta)$ as $B_{31}(\delta)+B_{32}(\delta)$, with
\begin{eqnarray*}
B_{31}(\delta)&=&-\sum_{n=0}^{N_{T}}\Delta t\sum_{\substack{\sigma \in \mathcal{E}_{int}}} \text{m}(\sigma)q_{K,\sigma}\frac{U_{L}^{n+1}-U_{K}^{n+1}}{2}\left(\psi_{L}^{n}-\psi_{K}^{n}\right),\\
B_{32}(\delta)&=& -\sum_{n=0}^{N_{T}}\Delta t\sum_{\substack{\sigma \in \mathcal{E}_{int}}} \text{m}(\sigma)q_{K,\sigma}U^{n+1}_{K}\left(\psi_{L}^{n}-\psi_{K}^{n}\right).
\end{eqnarray*}
Using the definition of $\nabla^{\delta}\psi$, we get
\begin{equation*}
B_{30}(\delta)=-\sum_{n=0}^{N_{T}}\sum_{\substack{\sigma \in \mathcal{E}_{int}}}\int_{t^{n}}^{t^{n+1}}\int_{T_{K,\sigma}}u_{\delta}(x,t)\frac{\text{m}(\sigma)}{\text{m}(T_{K,\sigma})}\left(\psi_{L}^{n}-\psi_{K}^{n}\right)\textbf{q}(x)\cdot \textbf{n}_{K,\sigma}\, dx \, dt,
\end{equation*}
which gives, using the definition of $u_{\delta}$, $B_{30}(\delta)=B_{310}(\delta)+B_{320}(\delta)$, where
\begin{eqnarray*}
B_{310}(\delta) &=& -\sum_{n=0}^{N_{T}}\Delta t\sum_{\substack{\sigma \in \mathcal{E}_{int}}}\text{m}(\sigma)\left(U^{n+1}_{L}-U^{n+1}_{K}\right)\left(\psi_{L}^{n}-\psi^{n}_{K}\right)\frac{1}{\text{m}(T_{K,\sigma})}\int_{T_{K,\sigma}\cap L}\textbf{q}(x)\cdot \textbf{n}_{K,\sigma}\, dx,\\
B_{320}(\delta) &= & -\sum_{n=0}^{N_{T}}\sum_{\substack{\sigma \in \mathcal{E}_{int}}}\text{m}(\sigma)U_{K}^{n+1}\left(\psi_{L}^{n}-\psi^{n}_{K}\right)\frac{1}{\text{m}(T_{K,\sigma})}\int_{T_{K,\sigma}}\textbf{q}(x)\cdot \textbf{n}_{K,\sigma}\, dx.
\end{eqnarray*}
Now we prove that $B_{32}(\delta)-B_{320}(\delta) \rightarrow 0$ as $\delta \rightarrow 0$ and $B_{31}(\delta), \, B_{310}(\delta)\rightarrow 0$ as $\delta \rightarrow 0$.\\
Using the regularity of $\textbf{q}$, there exists $D_{9}>0$ which does not depend on $\delta$ such that 
\begin{equation*}
\left| \frac{1}{\text{m}(\sigma)}\int_{\sigma}\textbf{q}(x)\cdot \textbf{n}_{K,\sigma} \, ds(x)-\frac{1}{\text{m}(T_{K,\sigma})}\int_{T_{K,\sigma}}\textbf{q}(x)\cdot \textbf{n}_{K,\sigma} \, dx\right| \leq D_{9} \delta.
\end{equation*}
Then we can estimate $B_{32}(\delta)-B_{320}(\delta)$:
\begin{eqnarray*}
\left| B_{32}(\delta)-B_{320}(\delta)\right| &\leq & \delta D_{9} M \sum_{n=0}^{N_{T}}\Delta t\sum_{\substack{\sigma \in \mathcal{E}_{int}}}\text{m}(\sigma) \left|\psi_{L}^{n}-\psi_{K}^{n}\right|\\
& \leq & \delta D_{8} D_{9} M \Vert \psi \Vert_{\mathcal{C}^{1}}\sqrt{D_{7}\text{m}(\Omega)} \longrightarrow 0 \text{ as } \delta \rightarrow 0.
\end{eqnarray*}
Moreover, we have
\begin{eqnarray*}
\left|B_{31}(\delta)\right| & \leq & \delta \Vert \textbf{q} \Vert_{\infty} \sum_{n=0}^{N_{T}}\Delta t\sum_{\substack{\sigma \in \mathcal{E}_{int}}}\tau_{\sigma}\left|U_{L}^{n+1}-U_{K}^{n+1}\right| \left|\psi_{L}^{n}-\psi_{K}^{n}\right|\\
& \leq & \delta \Vert \textbf{q} \Vert_{\infty} \Vert \psi \Vert_{\mathcal{C}^{1}}D_{8}\sqrt{D_{1}} \longrightarrow 0 \text{ as } \delta \rightarrow 0.
\end{eqnarray*}
We obtain in the same way that $B_{310}(\delta) \longrightarrow 0$ as $\delta \rightarrow 0$.\\

Hence $u$ satisfies
\begin{eqnarray*}
& & \int_{0}^{T}\int_{\Omega}\left(u(x,t)\, \pa_{t}\psi(x,t)+r'(u(x,t))\, \nabla u(x,t)\cdot \nabla\psi(x,t)+u(x,t)\, \mathbf{q}(x)\cdot \nabla\psi(x,t)\right)\, dx \, dt\\
& &+\int_{\Omega}u(x,0)\, \psi(x,0)\, dx=0,
\end{eqnarray*}
and then 
\begin{eqnarray*}
& & \int_{0}^{T}\int_{\Omega}\left(u(x,t)\, \pa_{t}\psi(x,t)+\nabla(r(u(x,t)))\cdot \nabla\psi(x,t)+u(x,t)\, \mathbf{q}(x)\cdot \nabla\psi(x,t)\right)\, dx \, dt\\
& &+\int_{\Omega}u(x,0)\, \psi(x,0)\, dx=0.
\end{eqnarray*}
It remains to show that $u-\overline{u} \in L^{\infty}(0,T;H^{1}_{0}(\Omega))$. This proof is based on the $L^{2}(0,T;H^{1}(\Omega))$ estimate (\ref{estiml2}) and is similar to the one of Theorem 5.1 in \cite{Chainais-Hillairet2003}.

\end{proof}

%%%%%%%%%%%%%%%%%%%%%%%%%%%%%%%%%%%%%%%%%%%%%%%%%%%%%%%%%%%%%%%%

\section{Numerical simulations}

\subsection{Order of convergence}

We consider the following one dimensional test case, picked in the paper of R. Eymard, J. Fuhrmann and K. Gärtner \cite{Eymard2006}. We look at the case where, in (\ref{eq}) we take $\Omega=(0,1)$, $T=0.004$, $r: s \mapsto s^{2}$, $q=100$, in (\ref{CI}) we take $u_{0}=0$ and in (\ref{BCD}) we take, for $v=200$,
\begin{eqnarray*}
\overline{u}(0,t) &=& (v-q)vt/2\\
\overline{u}(1,t) &=& \left\{ \begin{array}{lcl} 0 & & \text{ for } t<1/v, \\ (v-q)(vt-1)/2 & & \text{ otherwise.}\end{array}\right.
\end{eqnarray*}
The unique weak solution of this problem is then given by
\begin{equation*}
u(x,t)=\left\{\begin{array}{lcl} (v-q)(vt-x)/2 & & \text{ if } x<vt, \\ 0 & & \text{ if } x \geq vt.\end{array}\right.
\end{equation*}
The time step is taken equal to $ \Delta t=10^{-8}$ to study the order of convergence with respect to the spatial step size $\Delta x$. In Tables \ref{tableconvergenceinf} and \ref{tableconvergence2}, we compare the order of convergence in $L^{\infty}$ and $L^{2}$ norms of the scheme (\ref{CId})-(\ref{BCd})-(\ref{schemegene}) defined on one hand with the classical upwind flux (\ref{upwindclass}) and on the other hand with the Scharfetter-Gummel extended flux (\ref{flux}). We obtain the same order of convergence as in \cite{Eymard2006}. Moreover, it appears that even if we are in a degenerate case, the Scharfetter-Gummel extended scheme is more accurate than the classical upwind scheme.

\begin{table}[!ht]
\centering
\begin{tabular}{|c|l|c|c|c|c|}
\hline $j$ & $\Delta x(j)$ & $\Vert u-u_{\delta} \Vert_{L^{\infty}}$  & Order & $\Vert u-u_{\delta} \Vert_{L^{\infty}}$ & Order \\ 
  &  & Upwind &  & SG extended &  \\ 
\hline 0 & $ 2.5.10^{-2}  $   & $ 1.110         $  &      & $ 2.137.10^{-1} $  &   \\ 
       1 & $ 1.25.10^{-2} $   & $ 7.237.10^{-1} $  & 0.62 & $ 1.107.10^{-1} $  & 0.95  \\ 
       2 & $ 6.3.10^{-3}  $   & $ 4.485.10^{-1} $  & 0.69 & $ 5.631.10^{-2} $  & 0.98 \\ 
       3 & $ 3.1.10^{-3}  $   & $ 2.685.10^{-1} $  & 0.74 & $ 2.84.10^{-2}  $  & 0.99  \\ 
       4 & $ 1.6.10^{-3}  $   & $ 1.568.10^{-1} $  & 0.78 & $ 1.426.10^{-2} $  & 1  \\
       5 & $ 8.10^{-4}    $   & $ 9.10^{-2}     $  & 0.80 & $ 7.15.10^{-3}  $  & 1  \\
\hline
\end{tabular} 
\caption{Experimental order of convergence in $L^{\infty}$ norm for spatial step sizes $ \Delta x(j)=\ds{\frac{0.1}{2^{j+2}}}$ of the classical upwind scheme and of the Scharfetter-Gummel extended scheme.}
\label{tableconvergenceinf}
\end{table}

\begin{table}[!ht]
\centering
\begin{tabular}{|c|l|c|c|c|c|}
\hline $j$ & $\Delta x(j)$ & $\Vert u-u_{\delta} \Vert_{L^{2}}$  & Order & $\Vert u-u_{\delta} \Vert_{L^{2}}$ & Order \\ 
  &  & Upwind &  & SG extended &  \\ 
\hline 0 & $ 2.5.10^{-2}  $   & $ 3.336.10^{-1} $  &      & $ 4.806.10^{-2} $  &   \\ 
       1 & $ 1.25.10^{-2} $   & $ 1.852.10^{-1} $  & 0.85 & $ 1.642.10^{-2} $  & 1.55  \\ 
       2 & $ 6.3.10^{-3}  $   & $ 9.911.10^{-2} $  & 0.9  & $ 5.695.10^{-3} $  & 1.53 \\ 
       3 & $ 3.1.10^{-3}  $   & $ 5.182.10^{-2} $  & 0.94 & $ 2.10^{-3}     $  & 1.51  \\ 
       4 & $ 1.6.10^{-3}  $   & $ 2.669.10^{-2} $  & 0.96 & $ 7.142.10^{-4} $  & 1.49  \\
       5 & $ 8.10^{-4}    $   & $ 1.361.10^{-2} $  & 0.97 & $ 2.695.10^{-4} $  & 1.41  \\
\hline
\end{tabular} 
\caption{Experimental order of convergence in $L^{2}$ norm for spatial step sizes $ \Delta x(j)=\ds{\frac{0.1}{2^{j+2}}}$ of the classical upwind scheme and of the Scharfetter-Gummel extended scheme.}
\label{tableconvergence2}
\end{table}

%%%%%%%%%%%%%%%%%%%%%%%%%%%%%%

\subsection{Large time behavior}

%%%%%%%%%%%%%%%%%%%%%%

\subsubsection{The drift-diffusion system for semiconductors}

We may define the finite volume approximation of the drift-diffusion system (\ref{DD}). Initial and boundary conditions are approximated by (\ref{CId}) and (\ref{BCd}). The doping profile is approximated by $(C_{K})_{K \in \mathcal{T}}$ by taking the mean value of $C$ on each volume $K$. The scheme for the system (\ref{DD}) is given by:
\begin{equation*}
\left\{\begin{array}{lcl}\text{m}(K)\ds{\frac{N^{n+1}_{K}-N^{n}_{K}}{\Delta t}}+\sum_{\sigma \in \mathcal{E}_{K}} \mathcal{F}_{K,\sigma}^{n+1}=0, & & \forall K \in \mathcal{T}, \forall n \geq 0,\\ 
\text{m}(K)\ds{\frac{P^{n+1}_{K}-P^{n}_{K}}{\Delta t}}+\sum_{\sigma \in \mathcal{E}_{K}} \mathcal{G}_{K,\sigma}^{n+1}=0, & & \forall K \in \mathcal{T}, \forall n \geq 0,\\ 
\sum_{\sigma \in \mathcal{E}_{K}}\tau_{\sigma}DV_{K,\sigma}^{n}=\text{m}(K)\left(N_{K}^{n}-P_{K}^{n}-C_{K}\right), & & \forall K \in \mathcal{T}, \forall n \geq 0, \end{array}\right.
\end{equation*}
where 
\begin{equation*}
\mathcal{F}_{K,\sigma}^{n+1}= \tau_{\sigma} dr\left(N_{K}^{n},N_{\sigma}^{n}\right) \left(B\left(\ds{\frac{-DV_{K,\sigma}^{n}}{dr\left(N_{K}^{n},N_{\sigma}^{n}\right)}}\right)N_{K}^{n+1}-B\left(\ds{\frac{DV_{K,\sigma}^{n}}{dr(N_{K}^{n},N_{\sigma}^{n})}}\right)N_{\sigma}^{n+1}\right), \,\ \forall \sigma \in \mathcal{E}_{K},
\end{equation*}
and
\begin{equation*}
\mathcal{G}_{K,\sigma}^{n+1}= \tau_{\sigma} dr(P_{K}^{n},P_{\sigma}^{n}) \left(B\left(\ds{\frac{DV_{K,\sigma}^{n}}{dr(P_{K}^{n},P_{\sigma}^{n})}}\right)P_{K}^{n+1}-B\left(\ds{\frac{-DV_{K,\sigma}^{n}}{dr(P_{K}^{n},P_{\sigma}^{n})}}\right)P_{\sigma}^{n+1}\right), \,\ \forall \sigma \in \mathcal{E}_{K}.
\end{equation*}
We compute an approximation $(N^{eq}_{K},P^{eq}_{K},V^{eq}_{K})_{K \in \mathcal{T}}$ of the thermal equilibrium $(N^{eq},P^{eq},V^{eq})$ defined by (\ref{eqNP})-(\ref{eqV}) with the finite volume scheme proposed by C. Chainais-Hillairet and F. Filbet in \cite{Chainais-Hillairet2007}.\\
Then we introduce the discrete version of the deviation of the total energy from the thermal equilibrium (\ref{Econtinusc}): for $n\geq 0$,
\begin{eqnarray*}
\mathcal{E}^{n} &=& \sum_{K \in \mathcal{T}}\text{m}(K) \left(H(N^{n}_{K})-H(N^{eq}_{K})-h(N^{eq}_{K})\left(N^{n}_{K}-N^{eq}_{K}\right)\right)\\
& & + \sum_{K \in \mathcal{T}}\text{m}(K) \left(H(P^{n}_{K})-H(P^{eq}_{K})-h(P^{eq}_{K})(P^{n}_{K}-P^{eq}_{K})\right)\\
& & + \frac{1}{2}\sum_{\substack{\sigma \in \mathcal{E}_{int} \\ \sigma=K|L}}\tau_{\sigma}\left\vert DV_{K,\sigma}^{n}-DV_{K,\sigma}^{eq} \right\vert^{2}+\frac{1}{2}\sum_{K\in \mathcal{T}}\sum_{\sigma \in \mathcal{E}_{ext,K}^{D}}\tau_{\sigma}\left\vert DV_{K,\sigma}^{n}-DV_{K,\sigma}^{eq} \right\vert^{2},
\end{eqnarray*}
and the discrete version of the energy dissipation (\ref{Icontinusc}): for $n \geq 0$,
\begin{eqnarray*}
\mathcal{I}^{n} &=& \sum_{\substack{\sigma \in \mathcal{E}_{int} \\ \sigma=K|L}}\tau_{\sigma}\min\left(N_{K}^{n+1},N_{L}^{n+1}\right)\left[D\left(h\left(N^{n+1}\right)-V^{n}\right)_{K,\sigma}\right]^{2}\\
& & + \sum_{K \in \mathcal{T}}\sum_{\sigma \in \mathcal{E}_{ext,K}}\tau_{\sigma}\min\left(N_{K}^{n+1},N_{\sigma}^{n+1}\right)\left[D\left(h\left(N^{n+1}\right)-V^{n}\right)_{K,\sigma}\right]^{2}\\
& & + \sum_{\substack{\sigma \in \mathcal{E}_{int} \\ \sigma=K|L}}\tau_{\sigma}\min\left(P_{K}^{n+1},P_{L}^{n+1}\right)\left[D\left(h\left(P^{n+1}\right)+V^{n}\right)_{K,\sigma}\right]^{2}\\
& & + \sum_{K \in \mathcal{T}}\sum_{\sigma \in \mathcal{E}_{ext,K}}\tau_{\sigma}\min\left(P_{K}^{n+1},P_{\sigma}^{n+1}\right)\left[D\left(h\left(P^{n+1}\right)+V^{n}\right)_{K,\sigma}\right]^{2}.
\end{eqnarray*}
We present a test case for a geometry corresponding to a PN-junction in 2D  picked in the paper of C. Chainais-Hillairet and F. Filbet \cite{Chainais-Hillairet2007}. The doping profile is piecewise constant, equal to $+1$ in the N-region and $-1$ in the P-region.\\
The Dirichlet boundary conditions are
\begin{eqnarray*}
\overline{N}=0.1, \,\ \overline{P}=0.9, \,\ \overline{V}=\frac{h(\overline{N})-h(\overline{P})}{2} & & \text{ on } \{y=1, \,\ 0 \leq x \leq 0.25\},\\
\overline{N}=0.9, \,\ \overline{P}=0.1, \,\ \overline{V}=\frac{h(\overline{N})-h(\overline{P})}{2} & & \text{ on } \{y=0\}.
\end{eqnarray*}
Elsewhere, we put homogeneous Neumann boundary conditions.\\
The pressure is nonlinear: $r(s)=s^{\gamma}$ with $\gamma=5/3$, which corresponds to the isentropic model.\\
We compute the numerical approximation of the thermal equilibrium and of the transient drift-diffusion system on a mesh made of 896 triangles, with time step $\Delta t=0.01$.\\
We then compare the large time behavior of approximate solutions obtained with the three following fluxes:
\begin{itemize}
\item the upwind flux defined by (\ref{upwindclass}) (\textbf{Upwind}),
\item the Scharfetter-Gummel extended flux (\ref{flux}) with the first choice (\ref{dr1}) of $dr_{K,\sigma}$, close to that of Jüngel and Pietra (\textbf{SG-JP}),
\item the Scharfetter-Gummel extended flux (\ref{flux}) with the new definition (\ref{dr2}) of $dr_{K,\sigma}$ (\textbf{SG-ext}).
\end{itemize}
In Figure \ref{EetIsc} we compare the discrete relative energy $\mathcal{E}^{n}$ and its dissipation $\mathcal{I}^{n}$ obtained with the \textbf{Upwind} flux, the \textbf{SG-JP} flux and the \textbf{SG-ext} flux. With the third scheme, we observe that $\mathcal{E}^{n}$ and $\mathcal{I}^{n}$ converge to zero when time goes to infinity, without a saturation phenomenon. This scheme is the only one of the three which preserves thermal equilibrium, so it appears that this property is crucial to have a good asymptotic behavior.\\
In Figure \ref{EetIdt} we compare the relative energy $\mathcal{E}^{n}$ and its dissipation $\mathcal{I}^{n}$ obtained with the \textbf{SG-ext} flux for three different time steps $\Delta t=5.10^{-3}, \, 10^{-3}, \, 10^{-4}$. It appears that the decay rate does not depend on the time step.

\begin{figure}
\centering
\subfigure{\includegraphics[scale=0.45]{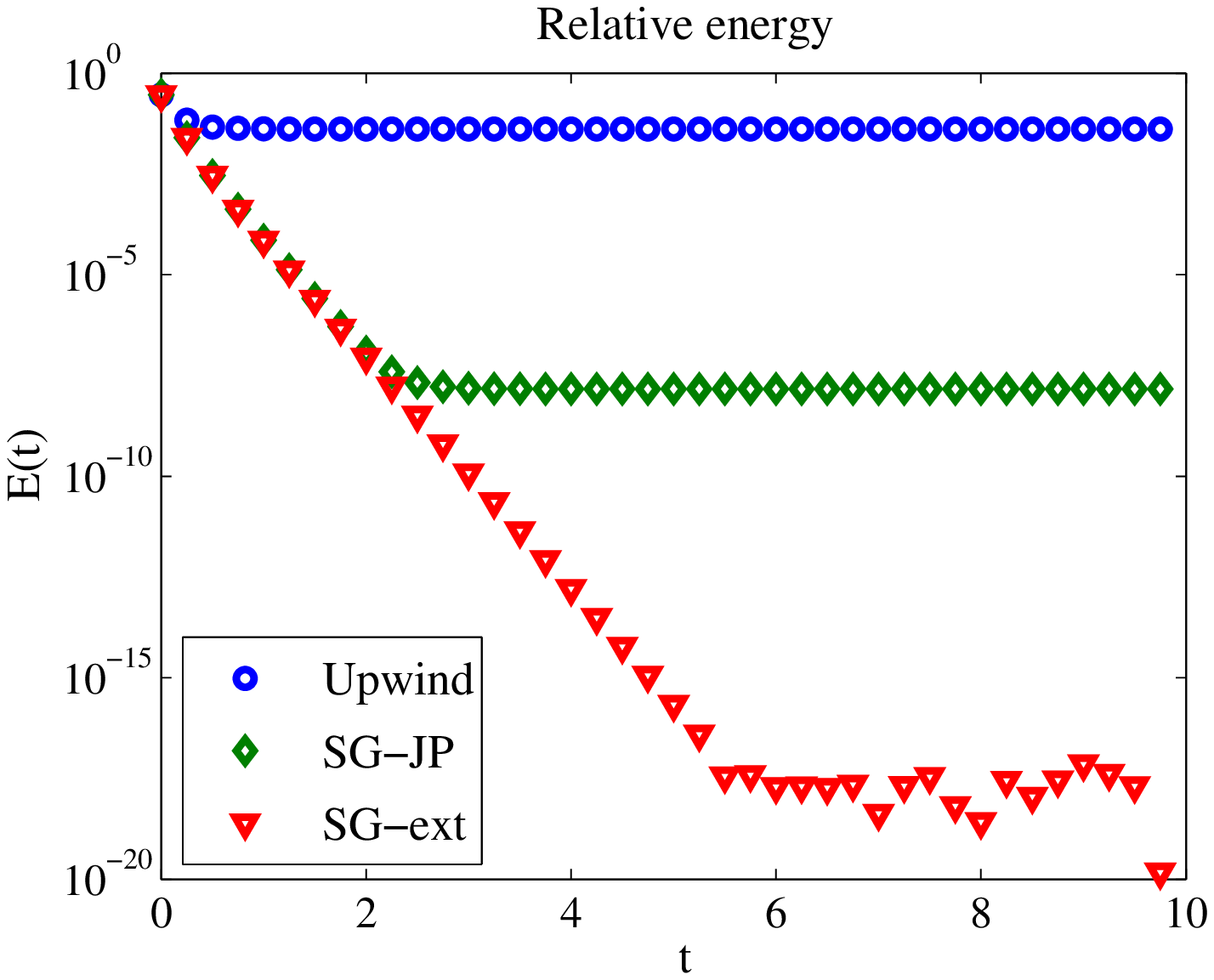}}
\subfigure{\includegraphics[scale=0.45]{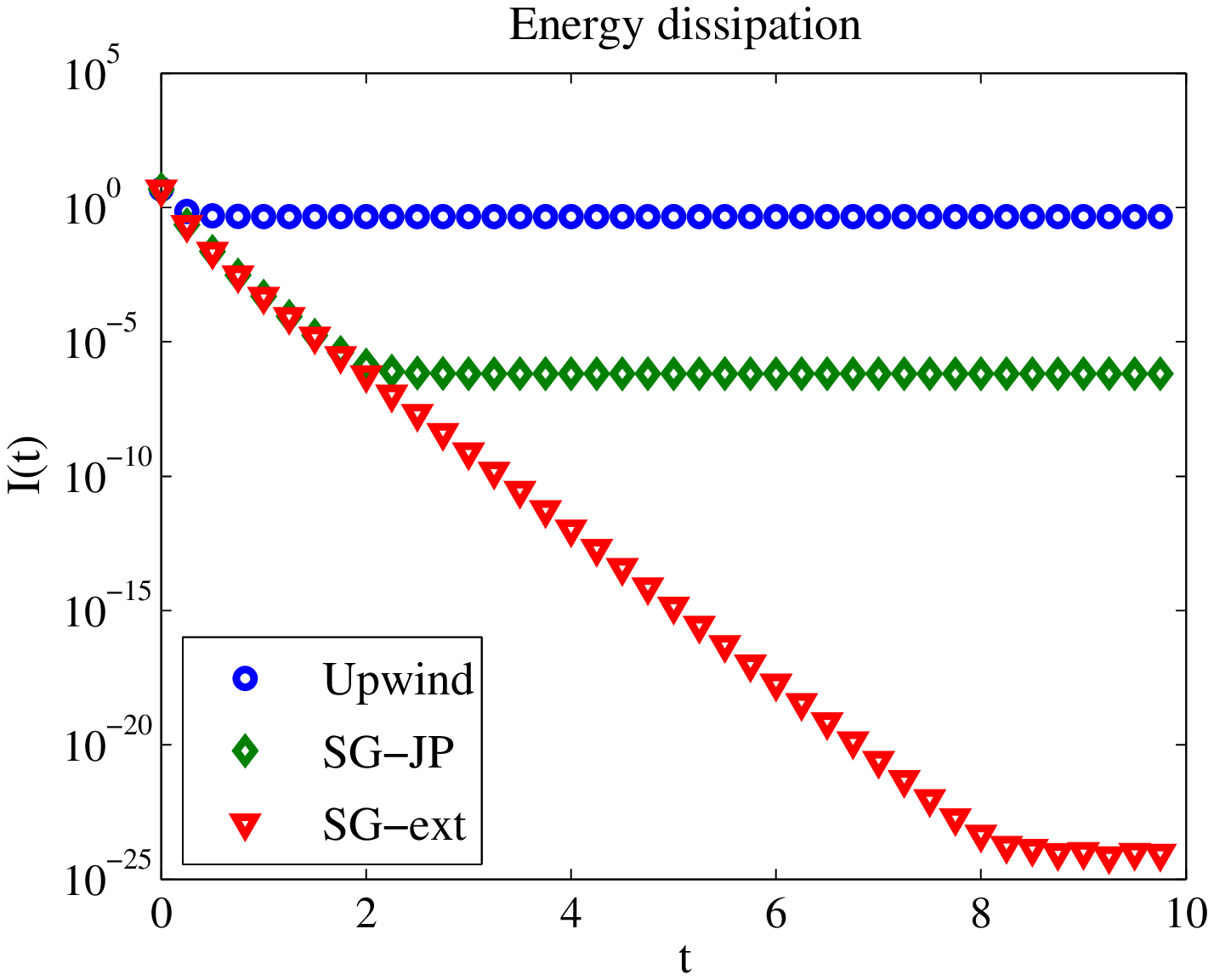}}
\caption{Evolution of the relative energy $\mathcal{E}^{n}$ and its dissipation $\mathcal{I}^{n}$ in log-scale for different schemes.}
\label{EetIsc}
\end{figure}

\begin{figure}
\centering
\subfigure{\includegraphics[scale=0.45]{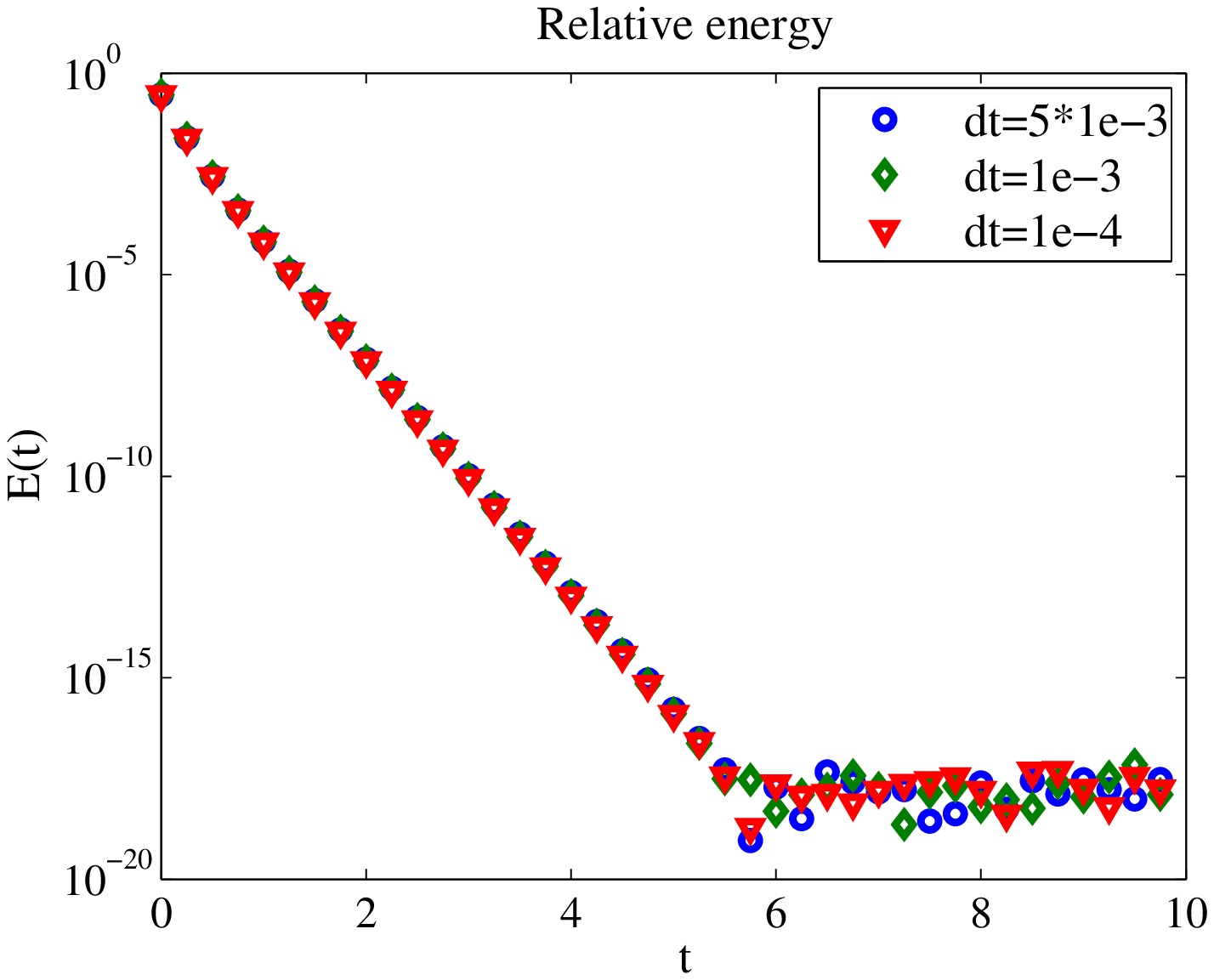}}
\subfigure{\includegraphics[scale=0.45]{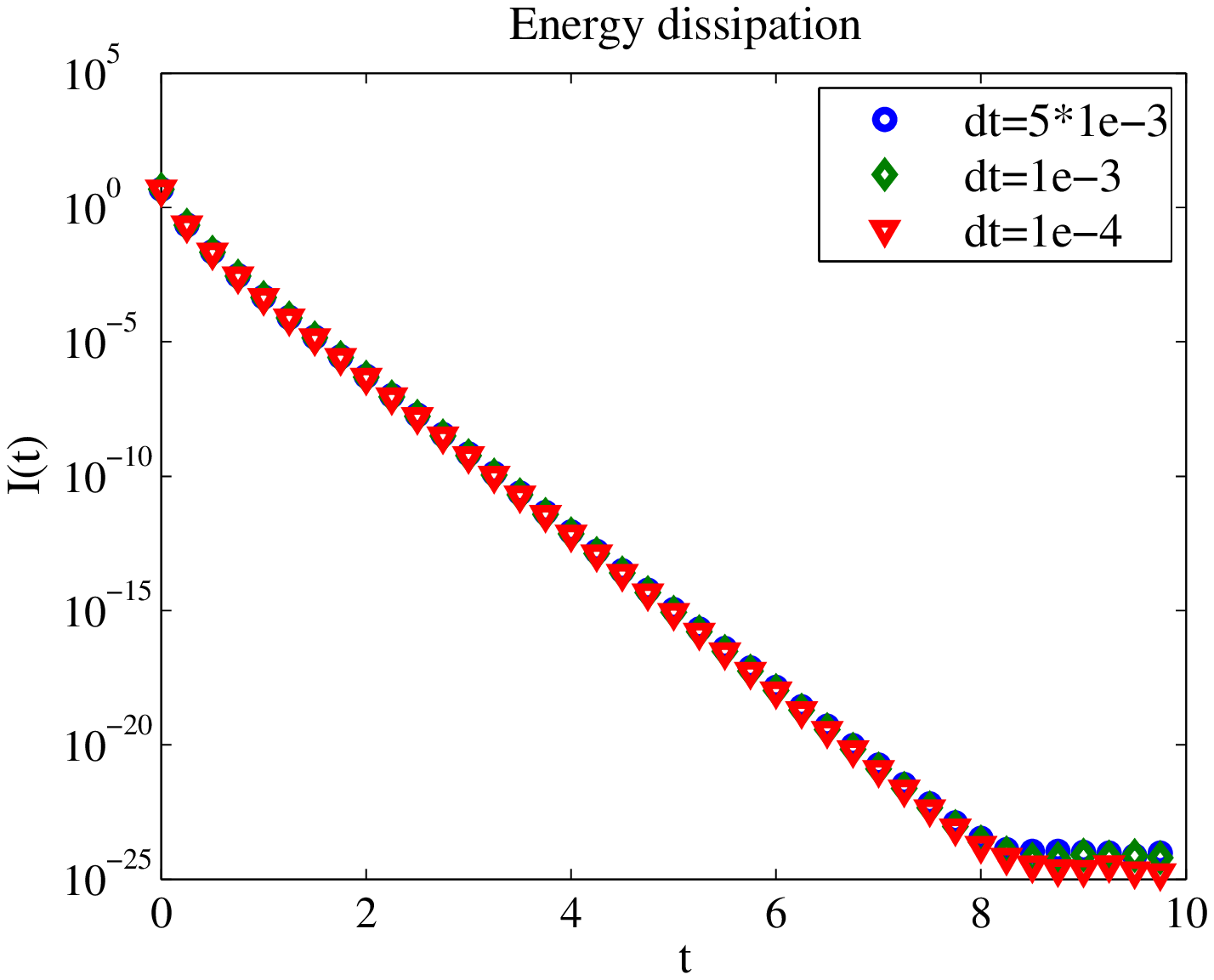}}
\caption{The relative energy $\mathcal{E}^{n}$ and its dissipation $\mathcal{I}^{n}$ in log-scale for different time steps.}
\label{EetIdt}
\end{figure}

%%%%%%%%%%%%%%%%%%%%

\subsubsection{The porous media equation}

We recall that the unique stationary solution $u^{eq}$ of the porous media equation (\ref{PM}) is given by the Barenblatt-Pattle type formula (\ref{barenblatt}), where $C_{1}$ is such that $u^{eq}$ as the same mass as the initial data $u_{0}$. We define an approximation $\left(U^{eq}_{K}\right)_{K \in \mathcal{T}}$ of $u^{eq}$ by
\begin{equation*}
U^{eq}_{K}=\left(\tilde{C}_{1}-\frac{\gamma -1}{2\gamma}\left\vert x_{K}\right\vert^{2}\right)^{1/(\gamma -1)}_{+}, \,\ K \in \mathcal{T},
\end{equation*}
where $\tilde{C}_{1}$ is such that the discrete mass of $\left(U^{eq}_{K}\right)_{K \in \mathcal{T}}$ is equal to that of $\left(U^{0}_{K}\right)_{K \in \mathcal{T}}$, namely $\ds{\sum_{K\in \mathcal{T}}\text{m}(K)U^{eq}_{K}=\sum_{K \in \mathcal{T}}\text{m}(K)U^{0}_{K}}$. We use a fixed point algorithm to compute this constant $ \tilde{C}_{1}$.\\
We introduce the discrete version of the relative entropy (\ref{Econtinupm})
\begin{equation*}
\mathcal{E}^{n}=\sum_{K \in \mathcal{T}}\text{m}(K)\left(H(U_{K}^{n})-H(U_{K}^{eq})+\frac{|x_{K}|^{2}}{2}(U_{K}^{n}-U_{K}^{eq})\right),
\end{equation*}
and the discrete version of the entropy dissipation (\ref{Icontinupm})
\begin{eqnarray*}
\mathcal{I}^{n} &=& \sum_{\substack{\sigma \in \mathcal{E}_{int}\\ \sigma=K|L}} \tau_{\sigma}\min\left(U_{K}^{n},U_{L}^{n}\right)\left\vert D\left(h(U^{n})+\frac{|x|^{2}}{2}\right)_{K,\sigma}\right\vert^{2}\\
& &+ \sum_{K \in \mathcal{T}}\sum_{\sigma \in \mathcal{E}_{ext,K}}\tau_{\sigma}\min\left(U_{K}^{n},U_{\sigma}^{n}\right)\left\vert D\left(h(U^{n})+\frac{|x|^{2}}{2}\right)_{K,\sigma}\right\vert^{2}.
\end{eqnarray*}
We consider the following two dimensional test case: $r(s)=s^{3}$, with initial condition
\begin{equation*}
u_{0}(x,y)=\left\{\begin{array}{ccl} \exp\left(-\frac{1}{6-(x-2)^{2}-(y+2)^{2}}\right) &\text{ if } & (x-2)^{2}+(y+2)^{2}<6, \\ \exp\left(-\frac{1}{6-(x+2)^{2}-(y-2)^{2}}\right) &\text{ if } & (x+2)^{2}+(y-2)^{2}<6, \\ 0 & \text{ otherwise, } & \end{array}\right.
\end{equation*}
and periodic boundary conditions. \\
Then we compute the approximate solution on $\Omega \times (0,10)$ with $\Omega=(-10,10) \times (-10,10)$. We consider a uniform cartesian grid with $100 \times 100$ points and the time step is fixed to $\Delta t=5.10^{-4}$.\\
In Figure \ref{evolu}, we plot the evolution of the numerical solution $u$ computed with the \textbf{SG-ext} flux at three different times $t=0$, $t=0.4$ and $t=4$ and the approximation of the Barenblatt-Pattle solution. In Figure \ref{EetImp} we compare the relative entropy $\mathcal{E}^{n}$ and its dissipation $\mathcal{I}^{n}$ computed with the scheme (\ref{schemegene}) and different fluxes: the \textbf{Upwind} flux, the \textbf{SG-JP} flux and the \textbf{SG-ext} flux. We made the same findings as in the case of the drift-diffusion system for semiconductors: the third scheme is the only one of the three for which there is no saturation phenomenon, which confirms the importance of preserving the equilibrium to obtain a consistent asymptotic behavior of the approximate solution. Moreover it appears that the entropy decays exponentially fast, which has been proved in \cite{Carrillo2000}.\\
In Figure \ref{norml1mp}, we represent the discrete $L^{1}$ norm of $U-U^{eq}$ (obtained with the \textbf{SG-ext} flux) in log scale. According to the paper of J. A. Carrillo and G. Toscani, there exists a constant $C>0$ such that, in this case,  
\begin{equation*}
\Vert u(t,x)-u^{eq}(x)\Vert_{L^{1}(\mathbb{R})} \leq C \exp\left(-\frac{3}{5}t\right), \,\ t \geq 0.
\end{equation*}
We observe that the experimental decay of $u$ towards the steady state $u^{eq}$ is exponential, at a rate better than $\ds{\frac{3}{5}}$. 

\begin{figure}[!ht]
\centering
\subfigure{\includegraphics[scale=0.55]{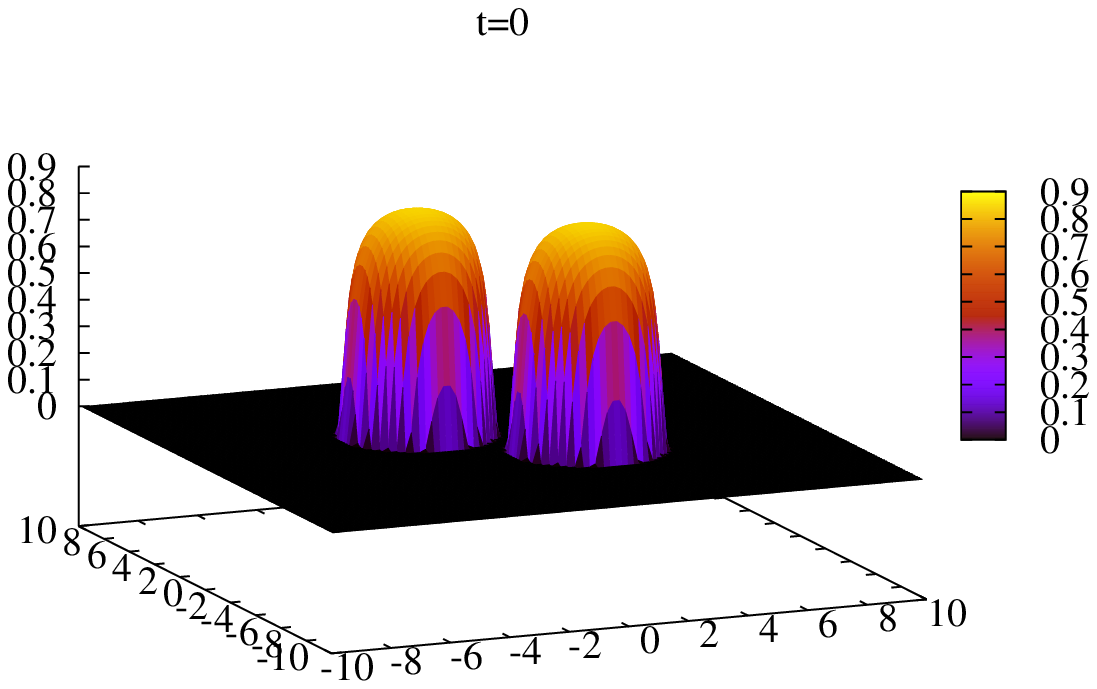}}
\subfigure{\includegraphics[scale=0.55]{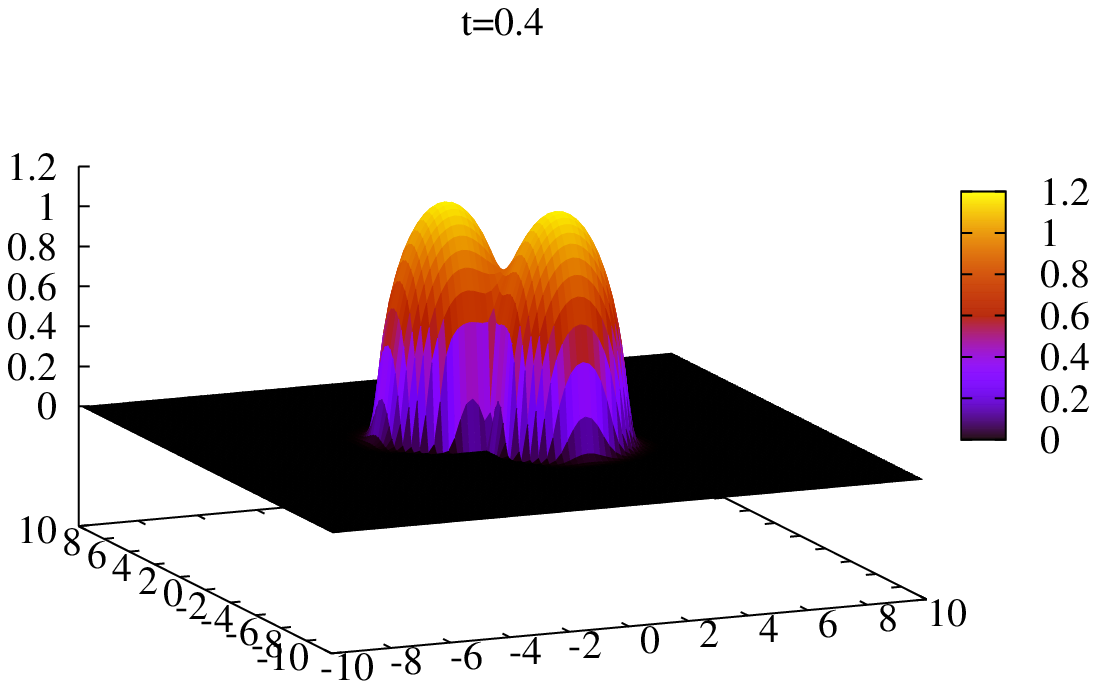}}
\subfigure{\includegraphics[scale=0.55]{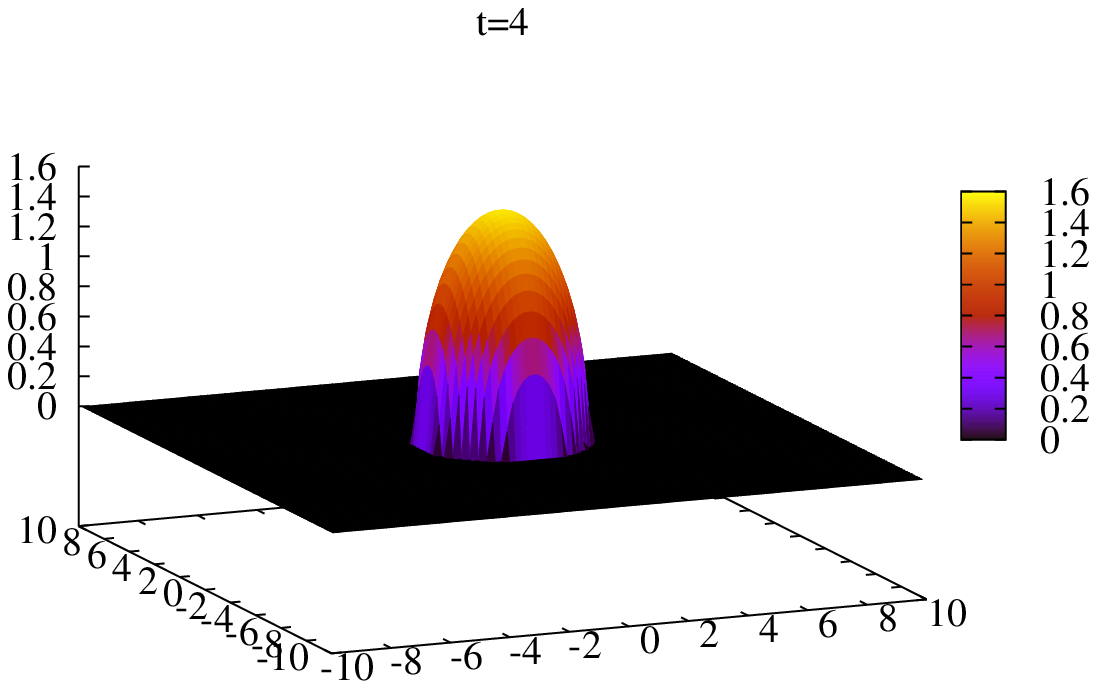}}
\subfigure{\includegraphics[scale=0.55]{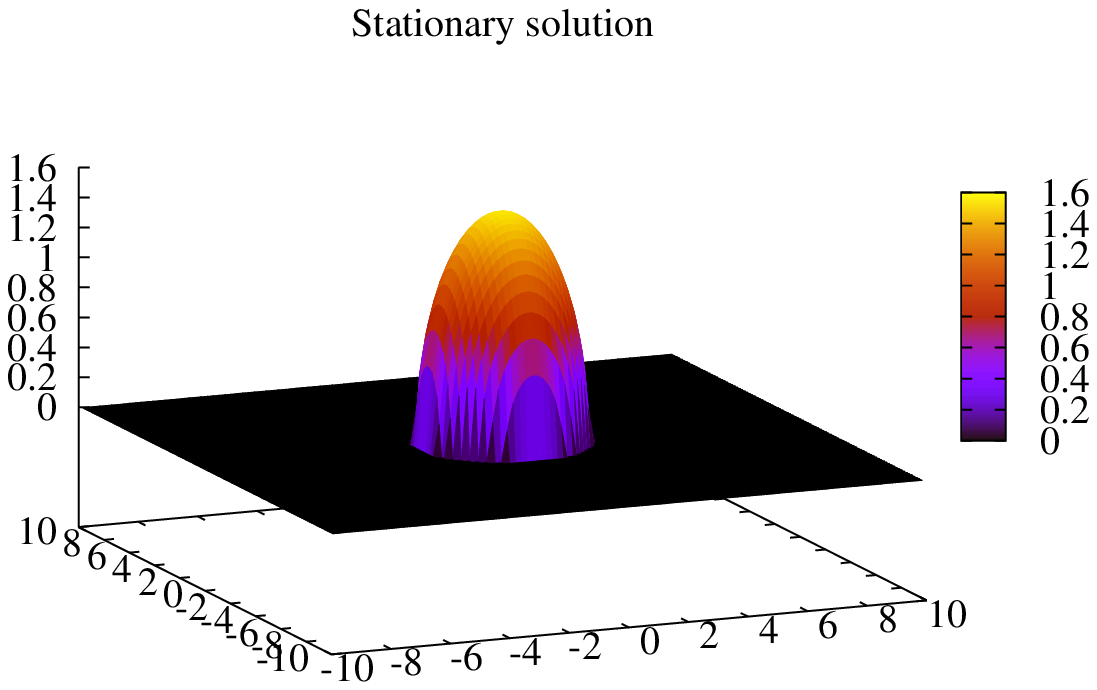}}
\caption{Evolution of the density of the gas $u$ and stationary solution $u^{eq}$.}
\label{evolu}
\end{figure}

\begin{figure}[!ht]
\centering
\subfigure{\includegraphics[scale=0.45]{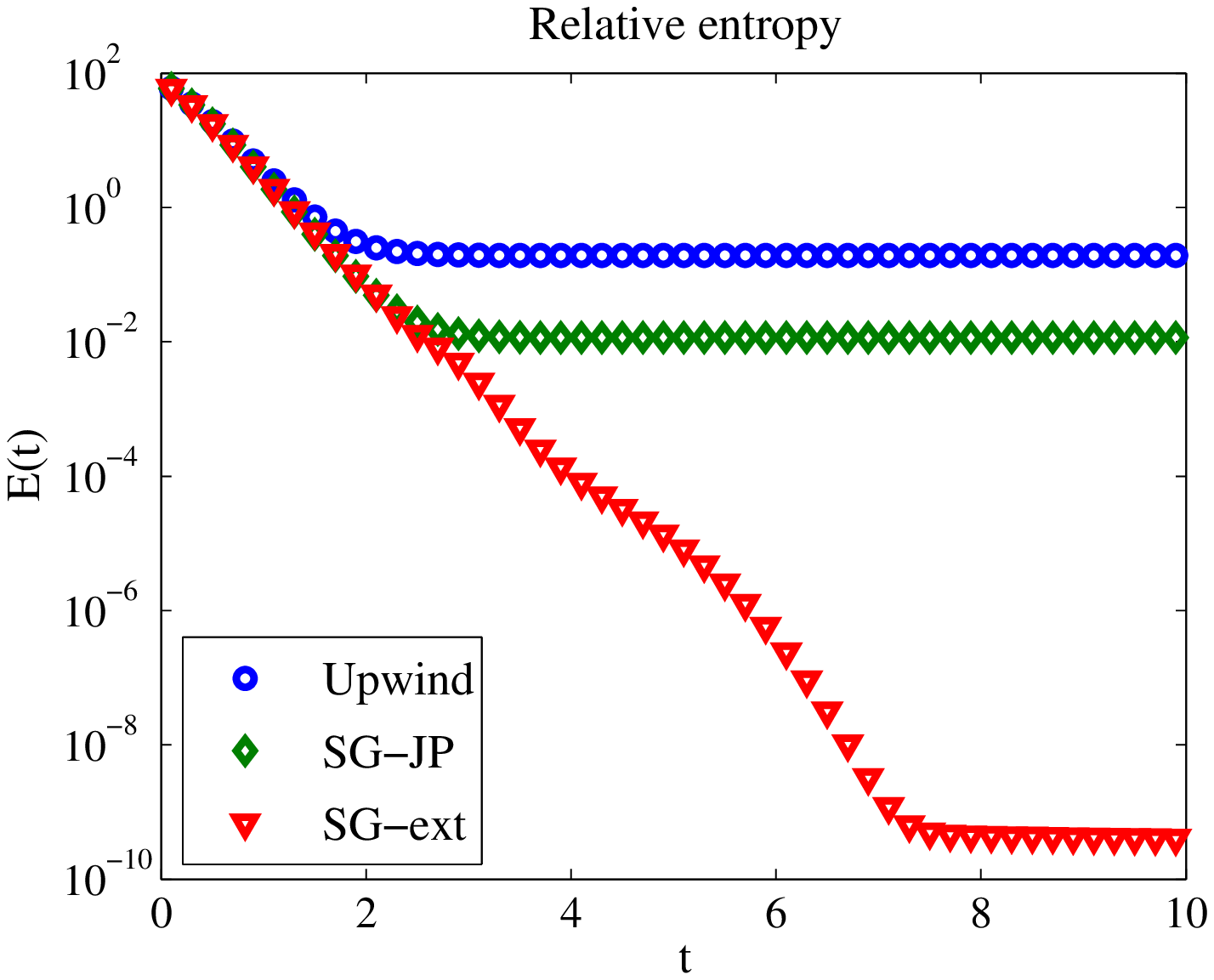}}
\subfigure{\includegraphics[scale=0.45]{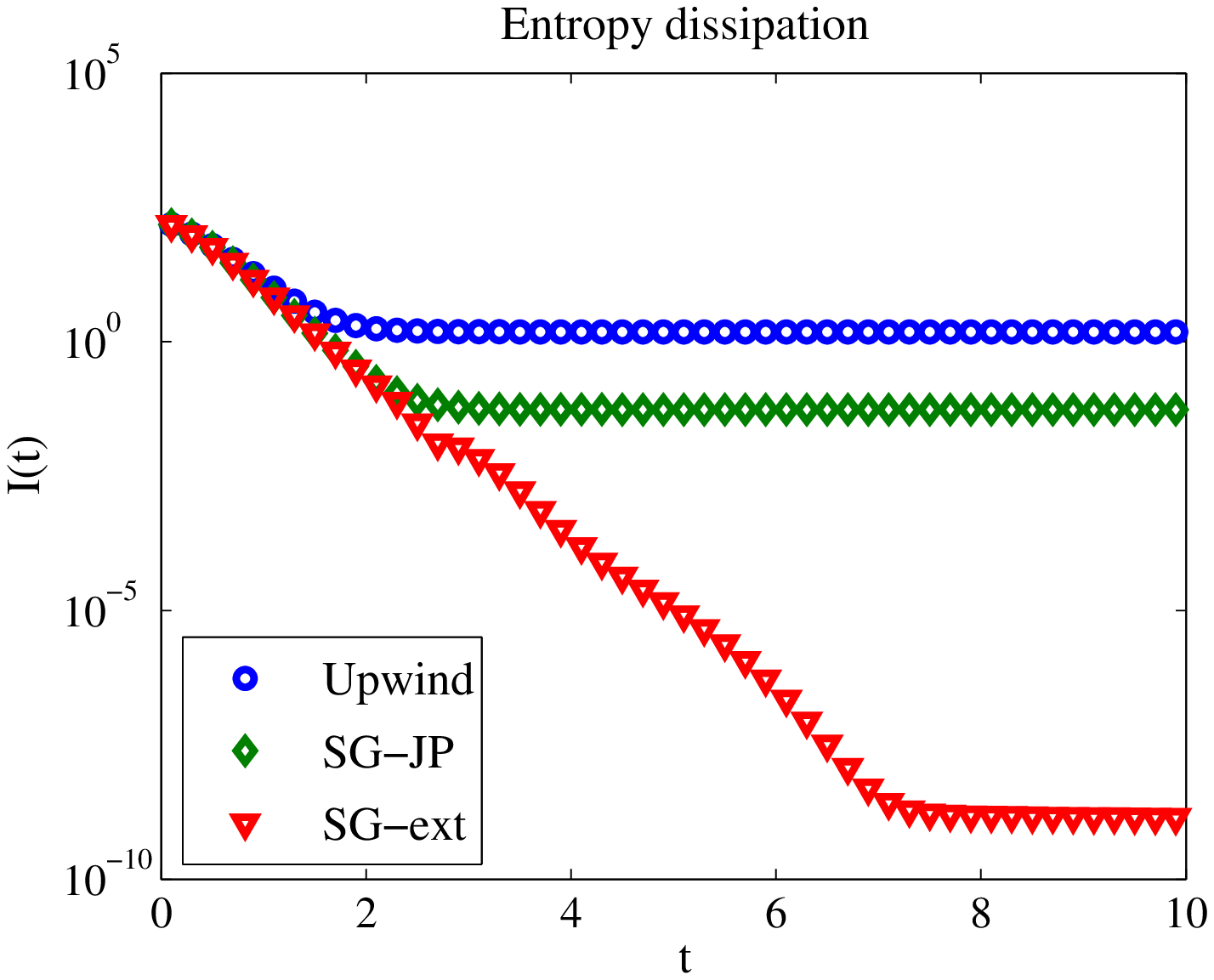}}
\caption{Evolution of the relative entropy $\mathcal{E}^{n}$ and its dissipation $\mathcal{I}^{n}$ in log-scale for different schemes.}
\label{EetImp}
\end{figure}

\begin{figure}[!ht]
\centering
\includegraphics[scale=0.45]{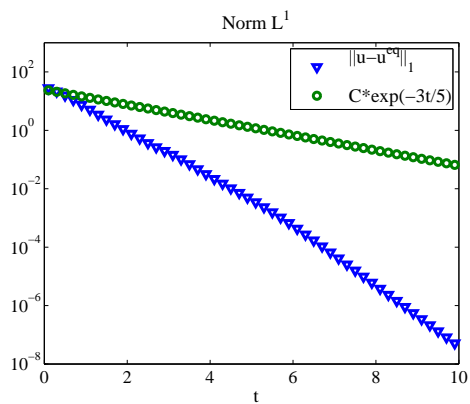}
\caption{Decay rate of $\Vert U-U^{eq} \Vert_{L^{1}}$.}
\label{norml1mp}
\end{figure}

%%%%%%%%%%%%%%%%%%%%%%%%%%%%%%%%%%%%%%%%%%%%%%%%%%%

\section{Conclusion}

In this article, we presented how to build a new finite volume scheme for nonlinear convection-diffusion equations. To this end, we have to adapt the Scharfetter-Gummel scheme, in such way that ensures that a particular type of steady-state is preserved. Moreover, this new scheme is easier to implement than existing schemes preserving steady-state.\\
In addition, we have shown that there is convergence of our scheme in the nondegenerate case. The proof of this convergence is essentially based on a discrete $L^{2}\left(0,T;H^{1}\right)$ estimate (\ref{estiml2}). A first step to then prove the convergence in the degenerate case would be to show this estimate without using the uniform lower bound of $u_{\delta}$.\\
Finally, we have observed that this scheme appears to be more accurate than the upwind one, even in the degenerate case. Indeed, we have applied it to the drift-diffusion model for semiconductors as well as to the porous media equation. In these two specific cases, we clearly underlined the efficiency of our scheme in order to preserve long-time behavior of the solutions. At this point, it still remains to prove rigorously this asymptotic behavior, by showing a similar estimate to the one of the continuous framework (\ref{H}) for discrete energy and discrete dissipation.\\
\newline
\textbf{Acknowledgement:} The author is partially supported by the European Research Council ERC Starting Grant 2009, project 239983-NuSiKiMo, and would like to thank C. Chainais-Hillairet and F. Filbet for fruitful suggestions and comments on this work.

%%%%%%%%%%%%%%%%%%%%%%%%%%%%%%%%%%%%%%%%%%%%%%%%%%%%%%%%%%%%%%%%

%\newpage
\bibliographystyle{plain}
\bibliography{biblio}

\begin{thebibliography}{10}

\bibitem{Alt1984}
H.W. Alt, S.~Luckhaus, and A.~Visintin.
\newblock On nonstationary flow through porous media.
\newblock {\em Annali di Matematica Pura ed Applicata}, 136(1):303--316, 1984.

\bibitem{Arimburgo1992}
F.~Arimburgo, C.~Baiocchi, and L.D. Marini.
\newblock {Numerical approximation of the 1-D nonlinear drift-diffusion model
  in semiconductors}.
\newblock In {\em {Nonlinear kinetic theory and mathematical aspects of
  hyperbolic system (Rapallo, 1992)}}, volume~9 of {\em Ser. Adv. Math. Appl.
  Sci.}, pages 1--10. World Sci. Publ., River Edge, NJ, 1992.

\bibitem{Brezzi1987}
F.~Brezzi, L.~D. Marini, and P.~Pietra.
\newblock {Méthodes d'éléments finis mixtes et schéma de
  Scharfetter-Gummel}.
\newblock {\em C. R. Acad. Sci. Paris Sér. I Math.}, 305(13):599--604, 1987.

\bibitem{Brezzi1989}
F.~Brezzi, L.~D. Marini, and P.~Pietra.
\newblock Two-dimensional exponential fitting and applications to
  drift-diffusion models.
\newblock {\em SIAM J. Numer. Anal.}, 26(6):1342--1355, 1989.

\bibitem{Brezis1983}
H.~Brézis.
\newblock {\em {Analyse fonctionnelle: théorie et applications}}.
\newblock Masson, Paris, 1983.

\bibitem{Carrillo1999}
J.~Carrillo.
\newblock Entropy solutions for nonlinear degenerate problems.
\newblock {\em Archive for rational mechanics and analysis}, 147(4):269--361,
  1999.

\bibitem{Carrillo2000}
J.A. Carrillo and G.~Toscani.
\newblock {Asymptotic $L^{1}$-decay of solutions of the porous medium equation
  to self-similarity}.
\newblock {\em Indiana University Math. Journal}, 49(1):113--142, 2000.

\bibitem{Chainais-Hillairet2007}
C.~Chainais-Hillairet and F.~Filbet.
\newblock {Asymptotic behavior of a finite volume scheme for the transient
  drift-diffusion model}.
\newblock {\em IMA J. Numer. Anal.}, 27(4):689--716, 2007.

\bibitem{Chainais-Hillairet2003}
C.~Chainais-Hillairet, J.G. Liu, and Y.J. Peng.
\newblock {Finite volume scheme for multi-dimensional drift-diffusion equations
  and convergence analysis}.
\newblock {\em M2AN}, 37(2):319--338, 2003.

\bibitem{Chainais-Hillairet2003a}
C.~Chainais-Hillairet and Y.J. Peng.
\newblock {Convergence of a finite volume scheme for the drift-diffusion
  equations in 1-D}.
\newblock {\em IMA J. Numer. Anal.}, 23:81--108, 2003.

\bibitem{Chainais-Hillairet2004}
C.~Chainais-Hillairet and Y.J. Peng.
\newblock {Finite volume approximation for degenerate drift-diffusion system in
  several space dimnesions}.
\newblock {\em M3AS}, 14(3):461--481, 2004.

\bibitem{Courant1952}
R.~Courant, E.~Isaacson, and M.~Rees.
\newblock On the solution of nonlinear hyperbolic differential equations by
  finite differences.
\newblock {\em Comm. Pure. Appl. Math.}, 5:243--255, 1952.

\bibitem{Eymard2006}
R.~Eymard, J.~Fuhrmann, and K.~Gärtner.
\newblock {A finite volume scheme for nonlinear parabolic equations derived
  from one-dimensional local Dirichlet problems}.
\newblock {\em Numer. Math.}, 102:463--495, 2006.

\bibitem{Eymard2003}
R.~Eymard and T.~Gallouët.
\newblock {$H$-convergence and numerical schemes for elliptic problems}.
\newblock {\em SIAM J. Numer. Anal.}, 41(2):539--562, 2003.

\bibitem{Eymard2000}
R.~Eymard, T.~Gallouët, and R.~Herbin.
\newblock Finite volume methods.
\newblock In {\em Handbook of numerical analysis, {V}ol. {VII}}, volume VII of
  {\em Handb. Numer. Anal., VII}, pages 713--1020. North-Holland, Amsterdam,
  2000.

\bibitem{Eymard2002}
R.~Eymard, T.~Gallouët, R.~Herbin, and A.~Michel.
\newblock {Convergence of a finite volume scheme for nonlinear degenerate
  parabolic equations}.
\newblock {\em Numer. Math.}, 92:41--82, 2002.

\bibitem{Eymard2006a}
R.~Eymard, D.~Hilhorst, and M.~Vohral{\'\i}k.
\newblock A combined finite volume--nonconforming/mixed-hybrid finite element
  scheme for degenerate parabolic problems.
\newblock {\em Numerische Mathematik}, 105(1):73--131, 2006.

\bibitem{Il'in1969}
A.M. Il'in.
\newblock {A difference scheme for a differential equation with a small
  parameter multiplying the highest derivative}.
\newblock {\em Math. Zametki}, 6:237--248, 1969.

\bibitem{Juengel1995}
A.~Jüngel.
\newblock {Numerical approximation of a drift-diffusion model for
  semiconductors with nonlinear diffusion}.
\newblock {\em ZAMM}, 75(10):783--799, 1995.

\bibitem{Juengel1995a}
A.~Jüngel.
\newblock Qualitative behavior of solutions of a degenerate nonlinear
  drift-diffusion model for semiconductors.
\newblock {\em Math. Models Methods Appl. Sci}, 5(4):497--518, 1995.

\bibitem{Juengel1997}
A.~Jüngel and P.~Pietra.
\newblock {A discretization scheme for a quasi-hydrodynamic semiconductor
  model}.
\newblock {\em Math. Models Methods Appl. Sci.}, 7(7):935--955, 1997.

\bibitem{Lazarov1996}
R.~D. Lazarov, Ilya~D. Mishev, and P.~S. Vassilevski.
\newblock Finite volume methods for convection-diffusion problems.
\newblock {\em SIAM J. Numer. Anal.}, 33(1):31--55, 1996.

\bibitem{Markowich1990}
P.~A. Markowich, C.~A. Ringhofer, and C.~Schmeiser.
\newblock {\em {Semiconductor equations}}.
\newblock Springer-Verlag, Vienna, 1990.

\bibitem{Markowich1986}
P.A. Markowich.
\newblock {\em {The stationary semiconductor device equations}}.
\newblock Computational Microelectronics, Vienna, {Springer} edition, 1986.

\bibitem{Markowich1993}
P.A. Markowich and A.~Unterreiter.
\newblock {Vacuum solutions of the stationary drift-diffusion model}.
\newblock {\em Ann. Scuola Norm. Sup. Pisa}, 20:371--386, 1993.

\bibitem{Scharfetter1969}
D.L. Scharfetter and H.K. Gummel.
\newblock {Large signal analysis of a silicon Read diode}.
\newblock {\em IEEE Trans. Elec. Dev.}, 16:64--77, 1969.

\end{thebibliography}

\end{document}